\renewcommand\P{{\mathbf{P}}}
\newcommand\GAP{{\operatorname{Q}}}
\newcommand\R{{\mathbf{R}}}
\newcommand\Z{{\mathbf{Z}}}
\newcommand\C{{\mathbf{C}}}
\newcommand\E{{\mathbf{E}}}
\newcommand\D{{\mathbf{D}}}
\newcommand\I{{\mathbf{I}}}
\renewcommand\v{{\mathbf{v}}}
\renewcommand\u{{\mathbf{u}}}
\newcommand\V{{\mathbf{V}}}
\newcommand\w{{\mathbf{w}}}
\newcommand\N{{\mathcal{N}}}
\newcommand\tr{{\operatorname{tr}}}
\renewcommand\th{{\operatorname{th}}}
\newcommand\mes{{\operatorname{mes}}}
\renewcommand\Re{{\operatorname{Re}}}
\renewcommand\Im{{\operatorname{Im}}}
\renewcommand\a{{x}}
\renewcommand\b{{y}}
\newcommand\eps{\varepsilon}
\theoremstyle{plain}
  \newtheorem{theorem}[subsection]{Theorem}
  \newtheorem{lemma}[subsection]{Lemma}
  \newtheorem{corollary}[subsection]{Corollary}
\theoremstyle{remark}
  \newtheorem{remark}[subsection]{Remark}
   \newtheorem{example}[subsection]{Example}
\theoremstyle{definition}
  \newtheorem{definition}[subsection]{Definition}
\begin{document}

\title[The circular Law]{Random Matrices: The circular Law}

\author{Terence Tao}
\address{Department of Mathematics, UCLA, Los Angeles CA 90095-1555}
\email{tao@math.ucla.edu}
\author{  Van  Vu}
\address{Department of Mathematics, Rutgers, Piscataway, NJ 08854}
\email{vanvu@math.rutgers.edu}

\subjclass{15A52, 15A12, 11P70}

\begin{abstract}  Let $\a$ be a  complex random variable with mean zero and bounded variance $\sigma^{2}$.
Let $N_{n}$ be a random matrix of order $n$ with  entries being i.i.d. copies of $\a$. Let $\lambda_{1}, \dots, \lambda_{n}$ be the eigenvalues of
$\frac{1}{\sigma \sqrt n}N_{n}$. Define the \emph{empirical spectral distribution} $\mu_{n}$ of  $N_{n}$ by the formula
$$ \mu_n(s,t) := \frac{1}{n} \# \{ k \leq n| \Re(\lambda_k) \leq s; \Im(\lambda_k) \leq t \}.$$

The following well-known conjecture has been open since the 1950's:

\vskip2mm

{\it Circular law conjecture:} $\mu_{n}$   converges
to the uniform distribution $\mu_\infty$ over the unit disk as $n$ tends to infinity.

\vskip2mm

We prove this conjecture, with strong convergence, under the
slightly stronger assumption that the $(2+\eta)\th$-moment of $\a$
is bounded, for any $\eta >0$. Our method builds and improves upon
earlier work of Girko, Bai, G\"otze-Tikhomirov, and Pan-Zhou, and
also applies for sparse random matrices.

The new key ingredient in the paper is a general result about the
least singular value of random matrices,
which was obtained using tools and ideas from additive combinatorics.

\end{abstract}

\maketitle

\section{Introduction}\label{sec1}

 Let $\a$ be a  complex random variable with finite non-zero variance $0 < \sigma^{2} < \infty$ and
  $N_{n}$ be the  random matrix of order $n$ with  entries being i.i.d. copies of $\a$.
  Let $\lambda_{1}, \dots, \lambda_{n}$ be the eigenvalues of
$\frac{1}{\sigma  \sqrt n} N_{n}$.
Define the \emph{empirical spectral distribution} (ESD) $\mu_{n}$ of  $N_{n}$ by the formula
$$ \mu_n(s,t) := \frac{1}{n} \# \{ k \leq n| \Re(\lambda_k) \leq s; \Im(\lambda_k) \leq t \}.$$
We say that the (strong) \emph{circular law} holds for $\a$ if, with probability $1$,
the spectral distribution $\mu_{n}$ converges
(uniformly) to the uniform distribution
$$\mu_\infty(s,t) := \frac{1}{\pi} \mes( \{ z \in \C| |z| \leq 1; \Re(z) \leq s; \Im(z) \leq t \} )$$
over the unit disk as $n$ tends to infinity.  In the literature one also
sees the \emph{weak circular law}, which asserts that for any fixed $s$
and $t$, that $\mu_n(s,t)$ converges to $\mu_\infty(s,t)$ in probability.

As the name suggests, the weak circular law is easier to prove than
the strong one. Using the approach in \cite{bai}, the proofs of both
types of convergence boil down to controlling the least singular value of
$\frac{1}{\sigma  \sqrt n} N_{n} - z I$. For the weak convergence,
one  needs a bound with failure probability tending to zero with $n$
tends to infinity (this is the approach taken in \cite{gotze, gt2},
for example). On the other hand, for the strong convergence one
needs the failure probability be summable in $n$. This appears much
more difficult and we  will discuss it in more detail in Section 2
(see the paragraph following Theorem \ref{lsv0}).


In this paper we shall be concerned exclusively with the strong circular law, and in particular with regard to the following well-known conjecture:

\vskip2mm

{\it Circular law conjecture.}  The strong circular law holds for any complex variable $\a$ with zero mean and finite non-zero variance.
\vskip2mm


The circular law conjecture was formulated in the early 1950s, as a
natural (non-hermitian) counterpart of Wigner's semi-circle law.
Since then, several partial results have been obtained, at the cost
of extra assumptions on the distribution of the basic variable $\a$.
In the next few paragraphs, we give a brief survey of these results.

If  $\a$ is complex Gaussian, the conjecture was proved by Mehta \cite{meh} in 1967, using the joint density function of the eigenvalues $\lambda_{i}$ which
was discovered by Ginibre few years earlier \cite{gin}.  An important breakthrough was made by  Bai \cite{bai-first}, following an earlier work of Girko \cite{girko}.
(Bai's paper discussed Girko's paper carefully and pointed out some gaps in that paper.)
 In \cite{bai-first}, Bai proved the claim under the assumption that $\a$ has finite sixth moment ($\E |\a|^6 < \infty$) and that the joint distribution of the real and imaginary parts of $\a$ has a bounded density.  Recently,  in \cite[Chapter 10]{bai},  a finer result
was obtained showing  that the sixth moment hypothesis can be weakened to $\E |\a|^{2+\eta} < \infty$ for any specified $\eta > 0$. However, the bounded density assumption remains critical. This assumption, unfortunately, excludes several important distributions, for instance discrete distributions such as Bernoulli random variables $x \in \{-1,+1\}$.

\begin{theorem} \label{theorem:bai}  \cite[Theorem 10.3]{bai} Assume that the complex random variable $\a$ has zero mean and  finite $(2+\eta)^\th$ moment for
some $\eta >0$ and also that the joint distribution of the real and imaginary part has a bounded density. Then the circular law holds for $\a$.
\end{theorem}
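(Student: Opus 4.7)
The plan is to establish the strong circular law via Girko's Hermitization trick combined with logarithmic potential theory. Write $M_n(z) := \frac{1}{\sigma\sqrt n}N_n - zI$ and let $\nu_{n,z}$ denote the empirical distribution of the singular values of $M_n(z)$. Convergence of $\mu_n$ to $\mu_\infty$ almost surely is equivalent, via the identity
$$\frac{1}{n}\log\bigl|\det M_n(z)\bigr| \;=\; \int_0^\infty \log x \; d\nu_{n,z}(x),$$
to proving that for Lebesgue-a.e.\ $z\in\C$ this quantity converges almost surely to the logarithmic potential $U_\infty(z) := \int_\C \log|z-w|\,d\mu_\infty(w)$. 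Helly's theorem together with the compactness and absolute continuity of $\mu_\infty$ then converts a.s.\ weak convergence of $\mu_n$ into uniform convergence of the associated distribution functions.

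The first step is to establish almost sure weak convergence $\nu_{n,z}\to \nu_{\infty,z}$ to an explicit deterministic limit. Since $M_n(z)^{\ast} M_n(z)$ is essentially of sample-covariance type, standard Stieltjes transform and moment methods apply; this step uses only the finite variance hypothesis and not the bounded density. A direct computation then verifies $\int_0^\infty \log x \, d\nu_{\infty,z}(x) = U_\infty(z)$. Weak convergence alone is insufficient, however, because $\log x$ is unbounded near $0$ and $+\infty$, so a uniform integrability argument is required on both ends of the spectrum.

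For the uniform integrability, split the log-integral at scales $[K,\infty)$, $[n^{-B},K)$, and $[0, n^{-B})$ for fixed $K$ and large $B$. The tail is handled by the $(2+\eta)$-th moment hypothesis, which gives a bound on the operator norm of $M_n(z)$ with summable failure probability and eliminates mass escaping to infinity. The bounded-regime is controlled by Step~1. The crucial estimate is near zero: one needs a Borel--Cantelli strength lower bound
$$\P\bigl(\sigma_n(M_n(z)) \leq n^{-B}\bigr) \;\leq\; n^{-1-\delta}$$
together with a bound of the form $\#\{k: \sigma_k(M_n(z)) \leq t\} \leq tn^{1+o(1)}$ for $t \geq n^{-B}$. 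Both estimates are to be obtained by exposing one row at a time and reducing the control of $\sigma_n$ to the distance from a single random row to a (random but independent) $(n-1)$-dimensional subspace; the bounded density hypothesis on the joint distribution of $(\Re\a,\Im\a)$ then provides, by direct integration, a small-ball estimate for this distance whose strength is summable in $n$.

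The main obstacle is this Borel--Cantelli strength bound on the least singular value. A failure probability of order $n^{-c}$ suffices for the weak circular law, but the summability required for strong convergence is precisely what forces the bounded density hypothesis in Bai's argument: absolute continuity of the row distribution allows one to convolve away all arithmetic structure in the orthogonal complement and obtain an integral estimate on the distance, uniformly over the exceptional events. Once the three ingredients are combined, Borel--Cantelli upgrades convergence in probability to almost sure convergence of $\frac{1}{n}\log|\det M_n(z)|$ to $U_\infty(z)$ for a.e.\ $z$, and Girko's Hermitization lemma delivers the strong circular law.
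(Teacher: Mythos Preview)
This theorem is quoted from Bai's monograph and is not re-proved in the present paper; the paper instead establishes the stronger Theorem~\ref{circular} by following Bai's framework in Section~\ref{circular-sec} and replacing only the bounded-density step by the new least-singular-value bound (Theorem~\ref{lsv}). Your sketch is correct and captures the Girko--Bai Hermitization strategy, but it is packaged in the now-standard logarithmic-potential language rather than Bai's characteristic-function formulation; the two are equivalent at the core, since both reduce to almost-sure control of $\int_0^\infty \log x\,\nu_n(dx,z)$ for a.e.\ $z$. The more substantive deviation is in the handling of the intermediate singular values. Bai (and the paper, following him) controls the range $[\eps_n,\infty)$ via a \emph{quantitative} rate of convergence of $\nu_n$ to the deterministic limit $\nu$---truncation of entries, moment bounds on $H_n$, and Levi-distance estimates as in \cite[\S10.5]{bai}---strong enough to beat the $\log n$ coming from $\log x$. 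You instead invoke a direct Wegner-type count $\#\{k:\sigma_k\le t\}\le tn^{1+o(1)}$; this is a legitimate alternative under bounded density, but it is not literally the same single-row-exposure argument that yields the $\sigma_n$ bound, and in Bai's actual proof it is the ESD-rate machinery that carries this part. Your identification of the least-singular-value estimate as the one place where the bounded-density hypothesis is genuinely needed, and your derivation of it via the distance from a row to the span of the remaining rows, matches exactly what the paper says in Section~\ref{sec1}.
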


A key idea in \cite{bai-first} is to analyze the ESD $\mu_n$ through its \emph{Stieltjes transformation} $s_n: \C \to \C$, defined by the formula\footnote{We are using $\sqrt{-1}$ for the imaginary unit, as we wish to reserve $i$ as an index of summation.}
$$s_{n}(z):= \frac{1}{n} \sum_{k=1}^{n}  \frac{1}{\lambda_{k} -z } = \int_\C \frac{1}{s+\sqrt{-1} t-z}\ d\mu_n(s,t).$$

As $s_{n}(z)$ is analytic everywhere except the poles, the real part already determines the eigenvalues $\lambda_{k}$.
If write $s_{n} (z) = s_{nr} (z) + \sqrt{-1} s_{ni} (z)$, $\lambda_{k} =\lambda_{kr} + \sqrt{-1} \lambda_{ki} $ and $z= s + \sqrt{-1}t$, we have the important identity

$$ s_{nr} (z) = \frac{1}{n} \sum_{k=1} ^{n } \frac{\lambda_{kr} -s} {|\lambda_{k} -z|^{2}} = -\frac{1}{2n} \sum_{k=1}^{n} \frac {\partial}{\partial s} \log |\lambda_{k}-z|^{2} = -\frac{1}{2} \frac {\partial}{\partial s} \int_{0}^{\infty} \log x  \nu_{n} (dx,z), $$

\noindent where $\nu_{n} (.,z)$ is the ESD of the Hermitian matrix $H_{n }:= (\frac{1}{\sqrt n} N_{n}- zI) (\frac{1}{\sqrt n} N_{n}- zI) ^{\ast}$. The task then reduces (at least in principle) to controlling the distributions $\nu_n$.

The $\log$ function has two poles, at $\infty$ and $0$. The first one is easy to deal with, as one can bound the largest singular value by a polynomial in $n$.
  The pole at $0$ poses a much more serious obstacle,  since  the smallest  eigenvalue of $H_{n}$ (or the least singular value of
$N_{n}-zI$) can be arbitrary close to $0$. (In fact, if the matrix is singular, which happens with positive probability in discrete models, then the least singular value is $0$.)
The bounded density  assumption  in Theorem \ref{theorem:bai} was introduced primarily in order to handle this obstacle.

In the last few years,  the least singular value problem has become
better understood in the discrete case, thanks to a series of papers
\cite{TVsing, Rud, RV, TVstoc}. In these papers, strong lower bounds
for the least singular value of a random matrix \cite{TVsing, Rud,
RV} or a random perturbation of a fixed matrix \cite{TVstoc} were
obtained. As a consequence, the circular law has recently been
established for various new classes of distributions.  For instance,
G\"otze and Tikhomirov \cite{gotze} proved  the weak circular law
for any sub-Gaussian\footnote{A variable is sub-Gaussian if it has
exponential tail; in particular all of its moments are bounded.}
distribution $\a$, using the arguments from  \cite{Rud}.   In
\cite{twenty}, Girko established the weak circular law assuming
bounded $4+\delta$ moment for some $\delta > 0$. Relying on
\cite{RV}, Pan and Zhou \cite{PZ} were recently able to verify the
strong  circular law for any distribution with a bounded fourth
moment. This assumption is needed for a number of reasons, in
particular allowing one to bound the operator norm of $N_n$ by
$O(\sqrt{n})$ with high probability. Very recently (a few months after
the current paper was first posted on the arXiv), G\"otze and Tikhomirov
\cite{gt2} proved the weak circular law under an assumption similar
to our main theorem below.

In this paper, we prove  the circular law only assuming a  bounded $(2+\eta)^\th$  moment,
for any fixed $\eta >0$.  In particular, we
have completely removed the
bounded density function assumption in Theorem \ref{theorem:bai}.

\begin{theorem}[Circular law]\label{circular}  Assume that $\a$ is a complex random variable
with zero mean and finite $(2+\eta)^\th$ moment for
some $\eta >0$, with strictly positive variance. Then the strong circular law holds for $\a$.
\end{theorem}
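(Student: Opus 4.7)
The plan is to follow the Girko--Bai Hermitization scheme, supplemented by the sharp least singular value bound advertised in the abstract. Write $W_n := \frac{1}{\sigma \sqrt{n}} N_n$, and for each $z \in \C$ let $\nu_n(\cdot, z)$ denote the ESD of the Hermitian matrix $H_n(z) := (W_n - zI)(W_n - zI)^{\ast}$. Via the identity for $s_{nr}(z)$ displayed in Section~1, the strong circular law reduces to showing that for almost every $z \in \C$ the log-determinant
\[
L_n(z) := \int_0^\infty \log x \, \nu_n(dx, z) = \frac{1}{n} \log \bigl|\det(W_n - zI)\bigr|^2
\]
converges, with failure probability summable in $n$, to the deterministic limit $L_\infty(z)$ obtained by integrating $\log x$ against the limiting Marchenko--Pastur-type law $\nu_\infty(\cdot, z)$ from Bai's analysis. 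Borel--Cantelli combined with Fubini and the Stieltjes inversion argument of \cite{bai} would then recover almost sure uniform convergence of $\mu_n$ to $\mu_\infty$.

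To establish $L_n(z) \to L_\infty(z)$, I would split the integral into three regions: $x \ge K$, $\eps \le x \le K$, and $0 \le x < \eps$, for a large constant $K$ and a small $\eps > 0$. The large-$x$ tail is dispatched by a polynomial bound $\|W_n\| = O(1)$ with probability $1 - O(n^{-100})$, obtained after first truncating the entries of $N_n$ at an appropriate level; the truncation is legitimate because the $(2+\eta)^{\th}$ moment hypothesis ensures via Borel--Cantelli that the truncated matrix agrees with the original eventually almost surely. The bulk region $\eps \le x \le K$ is handled by the standard Stieltjes transform analysis for sample-covariance-type matrices as in Bai \cite{bai} and Pan--Zhou \cite{PZ}: this gives convergence of $\nu_n(\cdot, z)$ to $\nu_\infty(\cdot, z)$ in the Kolmogorov metric with summable failure probability, and the bulk contribution to $L_n$ then follows by integrating against the bounded continuous function $\chi_{[\eps, K]} \log x$.

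The main obstacle, and the heart of the paper, is the small-$x$ tail $\int_0^\eps |\log x| \, \nu_n(dx, z)$. Two ingredients are required. First, a polynomial lower bound $\sigma_n(W_n - zI) \ge n^{-B}$ for some absolute constant $B$, holding with failure probability summable in $n$, uniformly for $z$ in a compact region of $\C$; this is exactly the new least singular value theorem proved via additive combinatorics advertised in the abstract, extended to a uniform-in-$z$ statement by a net argument on the compact region combined with the $1$-Lipschitz dependence of $\sigma_n$ on $z$. Second, a bound on the counting function $\#\{i : \sigma_i(W_n - zI) \le t\}$ forcing the number of small singular values to grow no faster than $n t^{c}$ for some $c > 0$, which can be read off from the bulk convergence of $\nu_n$ to the regular (bounded density near $0$) law $\nu_\infty$ combined with concentration of the empirical measure.

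Combining all three pieces via Borel--Cantelli yields almost sure convergence $L_n(z) \to L_\infty(z)$ for almost every $z$, after which the Girko--Bai Stieltjes inversion delivers Theorem~\ref{circular}. The step I expect to dominate the argument, both conceptually and in length, is the uniform-in-$z$ polynomial lower bound on $\sigma_n(W_n - zI)$ under only the $(2+\eta)^{\th}$ moment assumption: this is where the truncation must be handled delicately and where the additive-combinatorial least singular value machinery must be invoked.
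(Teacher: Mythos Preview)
Your overall Girko--Bai Hermitization scheme matches the paper's, but two of your technical steps have genuine problems as stated.

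First, the claim that truncation yields $\|W_n\| = O(1)$ with the truncated matrix \emph{agreeing} with the original eventually a.s.\ cannot be achieved simultaneously under only a $(2+\eta)^{\th}$ moment. Agreement via Borel--Cantelli requires truncation at level $n^\delta$ with $\delta(2+\eta) > 3$, hence $\delta > 1$ when $\eta < 1$; at that level no $O(1)$ operator norm bound is available. The paper instead truncates at a low level $n^\delta$ with $\delta < 1/4$, does \emph{not} claim the matrices coincide, and compares the ESDs $\nu_n$ and $\tilde\nu_n$ via a Levy-distance estimate (equation~\eqref{ncde}). For the large-$x$ tail only a crude polynomial bound $\|W_n\| \le n^{O(1)}$ is actually needed, since $\log(n^{O(1)}) = O(\log n)$ is absorbed by the polynomial rate of convergence of $\nu_n$ to $\nu$.

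Second, and more seriously, your extension of the least singular value bound to a statement uniform in $z$ via a net argument does not close. The net must have mesh $\sim n^{-B}$ and hence $\sim n^{2B}$ points; the union bound then requires $A > 2B + O(1)$. But in Theorem~\ref{lsv} (and its precise form Theorem~\ref{lsvprecise}, where $B > 2\gamma A + 3\gamma + 1/2$) the exponent $B$ grows at least linearly in $A$, so no choice of $A$ works. The paper sidesteps uniformity entirely: it applies Theorem~\ref{lsv} for each \emph{fixed} $z$ with failure probability $O(n^{-100})$, uses Borel--Cantelli to get $\sigma_n(W_n - zI) \ge n^{-B}$ eventually a.s.\ for that $z$, and then invokes Fubini to exchange the almost-sure and almost-every-$z$ quantifiers. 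The passage from pointwise a.e.\ convergence of $\int_0^{\eps_n}\log x\,\nu_n(dx,z)$ to $L^1(T)$ convergence is handled not by a counting bound on small singular values but by uniform integrability: the identity $\int_0^\infty \log x\,\nu_n(dx,z) = \tfrac{2}{n}\sum_k \log|\lambda_k - z|$ together with the local square-integrability of $\log|\cdot|$ yields a uniform $L^2(T)$ bound on the integrand, from which uniform integrability and hence \eqref{muso-2} follow.
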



This result can be further strengthened in several directions:

\begin{itemize}

\item We can further
 relax the condition $\E |\a|^{2+\eta} < \infty$ to $\E |\a|^2 \log^C(2+|\a|) < \infty$, where $C$ is a sufficiently large absolute constant.
  (For instance, $C=16$ is sufficient; see Section \ref{relax-sec} for details.)

  \item It is not necessary to assume that the entries have identical distributions. It suffices to assume that they are independent, have mean zero with uniformly bounded $(2+\eta)^\th$ moments, and that they are all \emph{dominated} (in a Fourier-analytic sense) by a single random variable with finite non-zero variance and bounded $(2+\eta)^{\th}$ moment; see Remark \ref{dom} below.  (See also \cite[p. 326-327]{bai} in which the extension of the circular law to the case of non-identical distributions is discussed.)

  \item One can obtain some quantitative estimates on the rate of convergence as well. For example, under the $(2+\eta)^\th$ moment assumption, we can show that almost surely,
  the distance $\sup_{s,t} |\mu_{n}(s,t)-\mu_\infty(s,t)|$ between $\mu_{n}$ and the limiting distribution $\mu_\infty$ in the uniform metric is at most $n^{-\eta'}$ for some constant $\eta' > 0$ and all sufficiently large $n$.

  \end{itemize}

  It is too technical to address these points  in the main proof, so we are going to first prove Theorem \ref{circular} and sketch out the necessary modifications to obtain these refinements in Sections \ref{relax-sec}, \ref{sec14}.

   The circular law  also holds for \emph{sparse} random matrices. For $0<\mu \le 1$, let $\I_{\mu}$ be the boolean
  random variable which takes value 1 with probability $\mu$ and $0$ with probability $1-\mu$.
  Let $\rho= n^{-1+\alpha}$, for a positive constant $\alpha$.   Let $N_{n,\rho}$ be the random matrix with the  $ij$ entry being $\I_{i,j, \rho} \a_{i,j}$, where the $\I_{i,j,\rho}$ and $\a_{ij}$ are jointly independent iid copies of $\I_{\rho}$ and $\a$ respectively.
   G\"otze and Tikhomirov \cite{gotze} proved that if $\a$ is sub-Gaussian and $\alpha > 3/4$, then $N_{n, \rho}$ admits the circular law. We can prove
   the following strengthening of this result:

 \begin{theorem}[Circular law for sparse matrices] \label{circular-sparse}
 Let $\alpha > 0$ and $\eta > 0$ be arbitrary positive constants. Assume that $\a$ is a complex random variable with zero mean and finite $(2+\eta)^\th$ moment.
 Set $\rho= n^{-1+\alpha}$ and let  $\mu_{n, \rho}$ be the ESD of $\frac{1}{\sigma \sqrt {n \rho} }N_{n,\rho}$, where $\sigma^2$, as usual, is the variance of $\a$. Then $\mu_{n, \rho}$ tends to the uniform distribution $\mu_\infty$ over the unit disk as $n$ tends to infinity.
\end{theorem}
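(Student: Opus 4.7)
The plan is to adapt the proof of Theorem \ref{circular} to the sparse setting. Writing $\tilde{\a}_{ij} := \I_{ij,\rho}\a_{ij}$ for the entries of $N_{n,\rho}$, these are independent with mean zero and variance $\rho\sigma^2$, so $\frac{1}{\sigma\sqrt{n\rho}}N_{n,\rho}$ is the natural analogue of the normalized matrix from Theorem \ref{circular}. However, Theorem \ref{circular} cannot be quoted as a black box: after rescaling $\tilde{\a}$ to unit variance, its $(2+\eta)^\th$ moment equals $\rho^{-\eta/2}\E|\a|^{2+\eta}/\sigma^{2+\eta}$, which diverges as $n \to \infty$ since $\rho = n^{-1+\alpha}$. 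So I would revisit the Girko--Bai Hermitization framework step by step, keeping careful track of the $\rho$-dependence.

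Hermitization reduces the problem to showing that $\int_{0}^{\infty}\log x \, d\nu_{n}(x,z)$ converges to the expected deterministic limit, where $\nu_{n}(\cdot,z)$ is the ESD of $H_n(z) := (\frac{1}{\sigma\sqrt{n\rho}}N_{n,\rho}-zI)(\frac{1}{\sigma\sqrt{n\rho}}N_{n,\rho}-zI)^{\ast}$, uniformly in $z$ on a sufficiently fine net. The logarithm at $+\infty$ is controlled by an operator norm bound: first truncate $\a$ at level $K = n^{1/2-\eta'}$ (negligible under the $(2+\eta)^\th$ moment hypothesis) and then use a standard trace/moment estimate to obtain $\|\frac{1}{\sigma\sqrt{n\rho}}N_{n,\rho}\|_{\operatorname{op}} = n^{o(1)}$ with very high probability; here the variance identity $\E|\tilde{\a}|^2 = \rho\sigma^2$ is exactly what makes the normalization $1/\sqrt{n\rho}$ the correct one. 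The bulk of $\nu_{n}(\cdot,z)$ can be handled by analyzing the Stieltjes transform via the rank-one resolvent decomposition, using only the first and second moments of $\tilde{\a}$; this part is essentially insensitive to the value of $\alpha > 0$, since after normalization each row of $N_{n,\rho}$ has second-moment structure identical to the dense case.

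The main obstacle is the logarithm at $0$: one needs a polynomial lower bound of the form $s_n\bigl(\frac{1}{\sigma\sqrt{n\rho}}N_{n,\rho} - zI\bigr) \ge n^{-C}$ holding with probability at least $1 - O(n^{-c})$, summably in $n$ and uniformly in $z$. In the dense case this is where the additive-combinatorial least singular value bound advertised in the abstract is used; in the sparse case the same machinery has to be adapted to vectors whose entries are themselves Bernoulli-modulated copies of $\a$. Conditional on the sparsity pattern, a typical row of $N_{n,\rho}$ has $\approx n\rho = n^{\alpha}$ nonzero entries, and one needs a sparse inverse Littlewood--Offord style small-ball estimate for the inner product of such a row with an arbitrary unit vector. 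This is the step I expect to be most delicate, and it is precisely here that the hypothesis $\alpha > 0$ enters, to guarantee that the number of nonzero entries per row is large enough (say $\ge n^{\alpha/2}$ with overwhelming probability) for the additive-combinatorial arguments to bite.

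Once the least singular value bound is established, the remaining assembly is routine: combining it with the operator norm and bulk estimates yields almost sure convergence of $\int_{0}^{\infty}\log x\, d\nu_n(x,z)$ to the correct deterministic limit for each $z$ in a dense subset of $\C$, and the Girko--Bai machinery then promotes this to weak convergence of $\mu_{n,\rho}$ to $\mu_\infty$.
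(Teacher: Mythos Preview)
Your outline is correct and follows essentially the same route as the paper: run the Girko--Bai Hermitization as in Section~\ref{circular-sec}, replace the least singular value input Theorem~\ref{lsv} by its sparse analogue Theorem~\ref{lsv-sparse} (which in turn rests on the sparse inverse Littlewood--Offord Theorem~\ref{ilo-sparse} and Lemma~\ref{concball-sparse}), and verify that the bulk convergence survives sparsification. The paper's Section~\ref{sparse-sec} singles out exactly the two checks you anticipate---the law-of-large-numbers tightness estimate \eqref{rhon} and the moment bounds behind \cite[Lemma~10.15]{bai}---as the only places needing care; your identification of the least singular value as the crux and of $\alpha>0$ as what makes the sparse small-ball/Littlewood--Offord machinery go through matches the paper's development in Section~\ref{lsv-sec}.
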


  \begin{remark} If one takes $\alpha=0$, the circular law no longer holds. In this case, $\rho=n^{-1}$ and
  each entry equals $0$ with probability $1-1/n$. Thus, a row is all-zero with probability
  $(1-1/n)^{n} \approx e^{-1}$. Since the rows are independent, it is easy to show that with high probability one has $\Theta (n)$ all-zero rows. But this means that  the ESD,
  with high probability, has positive constant mass at the origin. \end{remark}

We shall prove this theorem in parallel with Theorem \ref{circular}, by indicating at various junctures what the ``sparse'' version of certain key lemmas are.

The key ingredient in our proof of the circular law
is a new lower bound for the least singular value of the matrix $M+N_{n}$, where $M$ is an arbitrary matrix with complex entries having absolute values bounded from above by a polynomial in $n$.  For the circular law, we only need to consider the case $M=-zI$, where $I$ is the identity matrix. On the other hand, the general case is interesting on its own right and proves useful in other areas of mathematics (see, for example, \cite{TVstoc}).  Our arguments permit the coefficients of $M$ or $N_n$ to be as large as $n^C$ for any fixed constant $C$, which is the main reason why we do not need any stronger moment control on $\a$ beyond the $(2+\eta)^\th$ moment.

The rest of the paper is organized as follows. In the next section, we present the above mentioned result on the least singular value. The key tool for proving this result is a so-called \emph{Inverse Littlewood-Offord theorem}, discussed in Section \ref{sec3}. This theorem is motivated by  several previous results of the same spirit
from  \cite{TVsing}.  On the other hand, the bound in Section \ref{sec3} is nearly optimal and is  sharper than one that can be deduced from \cite{TVsing}.  This improvement is critical to us.

The proof of the Inverse Littlewood-Offord theorem is technical and
requires several lemmas, developed in Sections
\ref{concprob}-\ref{sec9}. In particular, we prove a forward
Littlewood-Offord theorem (Theorem \ref{flot}), which seems to be of
interest on its own right.  The proof of the Inverse Theorem follows
in Section \ref{sec10}. Next, we prove the desired bound on the
least singular value in Section \ref{lsv-sec}. The proof of the
circular law follows in Section \ref{circular-sec}. The rest of the
paper is devoted to various refinements of the circular law; for
instance, Theorem \ref{circular-sparse} is discussed in Section
\ref{sparse-sec}.

In order to handle the sparse case, we need  sparse versions of all
the tools mentioned above. These results can be proved using the
same argument with some modifications. We will only sketch these
proofs in the paper.

Let us conclude this section with our notation.

\begin{definition}[Asymptotic notation]  In the whole paper we assume that $n$ is sufficiently large,
whenever needed. Asymptotic notation is used under the assumption
that $n \rightarrow \infty$. Let $X$ and $Y$ be non-negative
quantities. $X = O(Y)$, $X \ll Y$, $Y \gg X$ and $Y = \Omega(X)$ all
mean that
 $X \leq CY$ for some positive constant $C$ and
  $X = \Theta(Y)$  means  $X \ll Y \ll X$; $X = o(Y)$ means that  $|X| \leq c(n) Y$ where $c(n)$  goes to zero as $n \to \infty$.

  In many cases, we want to indicate that  the hidden constants in $O, \Omega, \Theta$ or $\ll, \gg$
    depend on some additional parameters. In such cases,
     we will indicate this by subscripts. For instance,  $X = O_\eps(Y)$
     means that there is a positive constant  $C(\eps)$  depending only on $\eps$ such that $ X\le C(\eps) Y$.

  Throughout the paper, letters $A,B, C,c, \alpha, \eps, \eta, \delta, \kappa$ are used to denote constants. Letters $\mu, \rho, \beta$ denote quantities that may depend on $n$.
  \end{definition}

We use $\P$ to denote probability, $\E$ to denote expectation, and $\I_\rho$ to
denote indicator functions of expectation $\rho$ as used earlier in this section.
 If $E$ is an event, we use $\I(E)$ to denote the indicator of $E$, which equals $1$ when
 $E$ is true and $0$ otherwise.  The cardinality of a finite set $S$ will be denoted $\# S$,
 and the Lebesgue measure of a set $A \subset \C$ will be denoted $\mes(A)$.

\section{Least singular value bound}\label{sec2}

Let $M$ be a matrix of order $n$.
We use $\|M\|$ to denote the spectral norm of $M$ (i.e. the largest singular value of $M$)
$$\| M\| = \sup_{|\v|=1} |M\v|. $$

As discussed in the previous section, a key point in Bai's approach
is to obtain control on the lower tail distribution for the least singular value of $\frac{1}{\sqrt{n}} N_n - z I_n$, or equivalently to obtain control on the upper tail distribution of the norm of the inverse $\|(\frac{1}{\sqrt{n}} N_n - z I_n)^{-1}\|$.

This will be achieved in Theorems \ref{lsv0} below. The strength of this theorem  is that it
requires a very weak assumption on the distribution of the entries. All we need is a  finite second moment.
Several results of this type were obtained recently, under stronger assumptions of $\a$.
For example,  \cite{SST} addressed the case when $\a$ was real Gaussian; \cite{TVstoc}
addressed the case when $\a$ has support on the integers and $M$ has integer entries.  This was done building upon the $M=0$ case discussed in \cite{TVsing}.
 The case when   $\a$ has finite third moment  and that  $\| M \|$
 is bounded by $O(n^{1/2})$  was addressed  in \cite{PZ} (building upon the $M=0$ real-valued case proven in \cite{RV}).  In this result, the assumption on the norm of $M$ is important and the constant $1/2$ (in the exponent of $n$) cannot be replaced by any other constant.
 Furthermore, in the complex-valued case, the bounds in \cite{PZ} depended on the entire covariance matrix of $\a$ and not just on the variance.

\begin{theorem}[Least singular value bound]\label{lsv0}  Let $A, C_{1}$ be positive constants,
and let $\a$ be a complex-valued random variable with non-zero
finite variance (in particular, the second moment is finite). Then
there are positive constants $B$ and $C_{2}$  such that the
following holds: if $N_{n}$ is the random matrix of order $n$ whose
entries are iid  copies of $\a$, and $M$ is a deterministic matrix
of order $n$ with spectral norm at most $n^{C_{1}}$, then,
$$ \P( \| (M + N_n)^{-1} \| \geq n^B ) \le C_{2} n^{-A}.$$
\end{theorem}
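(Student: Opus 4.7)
The plan is to translate the bound on $\|(M+N_n)^{-1}\|$ into a lower bound on the least singular value: writing $A := M+N_n$, it suffices to show $\sigma_n(A) \geq n^{-B}$ with probability at least $1 - C_2 n^{-A}$, since $\|A^{-1}\| = \sigma_n(A)^{-1}$. My approach follows the Rudelson--Vershynin / Tao--Vu framework that already underlies \cite{TVsing, RV, TVstoc, PZ}: partition the unit sphere in $\C^n$ into compressible and incompressible vectors, control $\inf \|A\v\|$ separately on each piece, and handle the incompressible piece by the new Inverse Littlewood--Offord theorem from Section \ref{sec3}.

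First, I would crudely truncate and bound the operator norm. By Chebyshev applied to the finite second moment of $\a$, we have $\max_{i,j} |\a_{ij}| \leq n^{O(1)}$ outside an event of probability $O(n^{-A})$, whence $\|N_n\| \leq n \cdot \max_{ij}|\a_{ij}| = n^{O(1)}$; combined with $\|M\| \leq n^{C_1}$, this gives $\|A\| \leq n^{C_1 + O(1)}$ on a good event. Now call $\v$ \emph{compressible} if it lies within distance $\delta$ of a vector supported on at most $\kappa n$ coordinates, for small absolute constants $\delta, \kappa > 0$. A standard $\eps$-net argument on this low-complexity set (the net can be taken of size $\exp(O(\kappa n \log n))$ in view of the polynomial norm cap on $A$), combined with a trivial anti-concentration bound for a single coordinate of $A\v$ using only $\operatorname{Var}(\a) = \sigma^2 > 0$, shows that off an event of probability $O(n^{-A})$ no compressible $\v$ satisfies $\|A\v\| \leq n^{-B}$.

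The heart of the matter is the incompressible case. For incompressible $\v$ achieving the infimum, the standard row-distance identity gives
$$\sigma_n(A) \geq \frac{c}{\sqrt n} \min_{1 \leq i \leq n} \dist(R_i, H_i),$$
where $R_i$ is the $i$-th row of $A$ and $H_i$ is the span of the remaining rows. Fix $i$ and let $\w$ be a unit normal to $H_i$; then $\w$ is independent of $R_i$, and
$$\dist(R_i, H_i) = |R_i \cdot \w| = \Bigl|\sum_{j=1}^n \a_{ij} \overline{\w_j} + c_i\Bigr|,$$
a shifted linear form in iid copies of $\a$, with deterministic shift $c_i$ coming from the $i$-th row of $M$. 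The Inverse Littlewood--Offord theorem of Section \ref{sec3} says that if this quantity falls into a ball of radius $n^{-B}$ with probability exceeding a prescribed inverse polynomial threshold, then $\w$ must possess rich additive structure: most coordinates lie in a generalized arithmetic progression of bounded rank and size $n^{O(1)}$. A union bound over this structured family of candidate normals $\w$, using that $\w$ is determined by the $n-1$ iid rows $R_j$ for $j \neq i$ (so its distribution is not concentrated on any sparse structured set), should then eliminate this event with room to spare.

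The main obstacle is precisely this last union bound: the set of candidate $\w$ with the required additive structure must have small enough metric entropy that its total probability is defeated by the desired $n^{-A}$ bound. This is exactly where the sharpening of the Inverse Littlewood--Offord theorem in Section \ref{sec3} is essential, since the cruder version from \cite{TVsing} leaves a structured set too big to union-bound at an arbitrary polynomial rate (the authors emphasize this in the introduction). A secondary technical point is that with only a finite-variance hypothesis one cannot directly apply Littlewood--Offord to $\a$; a conditioning/symmetrization step that extracts a bounded, non-degenerate random component of each $\a_{ij}$ is needed before the linear-form anti-concentration machinery kicks in, but the polynomial norm cap on $M$ causes no trouble because all losses in the argument are polynomial in $n$.
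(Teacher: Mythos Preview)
Your framework is in the right neighborhood but differs from the paper's in a way that matters. The paper does \emph{not} use a compressible/incompressible split. Instead it dichotomizes directly on the small ball probability of the minimizing vector $\v$: call $\v$ \emph{poor} if $p_{n^{-B+1/2},\a}(\v)\le n^{-A-1}$ and \emph{rich} otherwise. The poor case (Lemma~\ref{lemma:poor}) is handled not by an $\eps$-net over a low-complexity set, but by a short conditioning/duality trick: pass to the adjoint, pick the row with the largest coefficient in a left null-approximate vector, freeze the other $n-1$ rows, and observe that the last row then hits a \emph{poor} direction $\u$ with probability $\le n^{-A-1}$. The rich case (Lemma~\ref{lemma:rich}) is where the Inverse Littlewood--Offord entropy bound is spent, and here the paper runs the union bound over the net for $\v$ itself, not over candidate normals $\w$ to a row hyperplane.

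Your incompressible-via-row-distances paragraph has a genuine gap at the step ``a union bound over this structured family of candidate normals $\w$ \ldots\ should then eliminate this event with room to spare.'' To make that work you must, for each $\w'$ in the $\beta$-net, bound the probability that the $n-1$ independent rows $R_j$ are nearly orthogonal to $\w'$; but the natural per-row bound is $p_{\beta n^{\gamma},\a}(\w')$ at a \emph{larger} scale than $\beta$, while the Inverse Littlewood--Offord net size is governed by $p_{\beta,\a}(\w')$. Closing this scale mismatch is exactly what the paper's pigeonhole over a ladder of scales (the sets $\Omega_{j,k}$ in the proof of Lemma~\ref{lemma:rich}, choosing $j$ so that $p_{n^{-B+C(j+1)+1/2},\a}(\v)\le n^{\delta}p_{n^{-B+Cj+1/2},\a}(\v)$) accomplishes; without it the entropy bound $n^{(-1/2+\eps)n}p^{-n}$ and the tensorized probability $(n^{\delta}p)^{n}$ need not beat each other for arbitrary $A$. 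Your compressible step also leans on an $\eps$-net of size $\exp(O(\kappa n\log n))$, but with only finite variance (no sub-Gaussian tail) the per-vector bound you can extract from a single-coordinate anti-concentration is only $1-\Omega(1)$, which is not small enough to defeat a net that large; the paper sidesteps this entirely by never introducing the compressible class.
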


It is very important that we can have any constant $A$ in the bound.
If   $A>1$, then the right hand side is summable in $n$ and this is
critical to the strong circular law. In order to prove the weak law,
any $A$ suffices. The difficulty between getting any $A$ and getting
$A>1$ can be illustrated by the following simplified case. Take $M$
be the zero matrix and $N$ be the random Bernoulli matrix (whose
entries take value $\pm 1$ with probability $1/2$). To make the
situation even simpler, assume that we only want to bound the
probability that $N^{-1}$ does not exists (namely that $N$ is
singular). Already in the  70s, Koml\'os \cite{Bolbook} proved that
this probability is $O(n^{-1/2})$. However, the first proof for a
bound of the type $O(n^{-1 -\eps})$ was obtained only almost twenty
years later by Kahn, Koml\'os and Szemer\'edi  \cite{KKS}, using a
much more complex argument.

Let us now go back to Theorem \ref{lsv0}. In fact, we have a more
precise statement involving a seemingly stronger (but actually
equivalent) assumption on $\a$. More precisely, we introduce the
following technical definition.

\begin{definition}[Controlled second moment]  Let $\kappa \geq 1$.
A complex random variable $\a$ is said to have \emph{$\kappa$-controlled second moment} if one
has the upper bound
$$ \E |\a|^2 \leq \kappa$$
(in particular, $|\E \a| \leq \kappa^{1/2}$), and the lower bound
\begin{equation}\label{eyot}
 \E \Re( z \a - w )^2 \I(|\a| \leq \kappa) \geq \frac{1}{\kappa} \Re(z)^2
\end{equation}
for all complex numbers $z,w$.
\end{definition}

\begin{example} The Bernoulli random variable ($\P(\a=+1) = \P(\a=-1) = 1/2$) has $1$-controlled second moment.  The condition \eqref{eyot} asserts in particular that $\a$ has variance at least $\frac{1}{\kappa}$, but also asserts that a significant portion of this variance occurs inside the event $|\a| \leq \kappa$, and also contains some more technical phase information about the covariance matrix of $\Re(\a)$ and $\Im(\a)$.
\end{example}

 To show that this condition is not
significantly stronger than bounded second moment, we  prove
 that any complex random variable of finite non-zero variance has controlled second moment after a (harmless) phase rotation:

\begin{lemma}\label{rot} Let $\a$ be a complex random variable with finite non-zero variance.
 Then there exists a phase $e^{\sqrt{-1} \theta}$ and a $\kappa \geq 1$ such that $e^{\sqrt{-1} \theta} \a$ has $\kappa$-controlled second moment.
\end{lemma}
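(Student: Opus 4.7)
The plan is to rotate $\a$ appropriately and then verify the controlled second moment condition by reducing it to a linear regression computation. Write $\a = X_0 + \sqrt{-1}Y_0$, let $\Sigma_0$ be the $2\times 2$ covariance matrix of $(X_0, Y_0)$, and note that $\tr(\Sigma_0) = \E|\a - \E\a|^2 > 0$. For a candidate phase $\theta$, set $\a' = e^{\sqrt{-1}\theta}\a = X + \sqrt{-1}Y$. Expanding $\Re(z\a' - w) = aX - bY - u$ with $z = a + \sqrt{-1}b$ and $u = \Re w$, the condition \eqref{eyot} for $\a'$ becomes $\E(aX - bY - u)^2 \I(|\a|\leq\kappa) \geq a^2/\kappa$ for all real $a,b,u$. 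Dividing by $a^2$ (the case $a=0$ is trivial) reduces the task to showing
$$\inf_{b', c' \in \R} \E(X - b'Y - c')^2 \I(|\a|\leq\kappa) \geq \frac{1}{\kappa}.$$
The remaining requirement $\E|\a|^2 \leq \kappa$ is automatic once $\kappa$ exceeds the (finite) second moment of $\a$.

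The displayed infimum is a standard linear regression problem. Letting $E = \{|\a|\leq\kappa\}$, $p_\kappa = \P(E)$, and $\sigma_X^2, \sigma_Y^2, \sigma_{XY}$ the conditional variances and covariance of $(X,Y)$ given $E$, one optimizes first in $c'$ (by centering) and then in $b'$ to obtain that the infimum equals $p_\kappa r_\kappa$, where $r_\kappa = \sigma_X^2 - \sigma_{XY}^2/\sigma_Y^2$ when $\sigma_Y^2 > 0$ and $r_\kappa = \sigma_X^2$ when $\sigma_Y^2 = 0$. Since $\E|\a|^2 < \infty$, dominated convergence gives $p_\kappa \to 1$ and the conditional moments converge to their unconditional counterparts, so $r_\kappa$ tends to the analogous unconditional residual variance $r$.

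The phase is now chosen according to a dichotomy on $\Sigma_0$. If $\det(\Sigma_0) > 0$ (equivalently, the support of $\a$ is not contained in any real affine line in $\C$), take $\theta = 0$: rotation preserves the determinant, so $\det(\Sigma) = \det(\Sigma_0) > 0$, which in particular forces $\operatorname{Var}(Y) > 0$, and then $r = \det(\Sigma_0)/\operatorname{Var}(Y) > 0$. If instead $\det(\Sigma_0) = 0$ while $\tr(\Sigma_0) > 0$, then $\Sigma_0$ has rank one and the support of $\a - \E\a$ lies in a real one-dimensional subspace of $\C$; choose $\theta$ to send this subspace to the real axis. Then $Y$ is almost surely equal to the constant $\Im(e^{\sqrt{-1}\theta}\E\a)$, so $\sigma_Y^2 = 0$ identically for every $\kappa$ with $p_\kappa > 0$, and $r_\kappa = \sigma_X^2 \to \tr(\Sigma_0) > 0$.

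In both cases $r > 0$, so taking $\kappa \geq \max(1, \E|\a|^2, 2/r)$ large enough that $p_\kappa r_\kappa \geq r/2$ yields $p_\kappa r_\kappa \geq 1/\kappa$, completing the verification. The only genuinely delicate step is the degenerate case: an arbitrary phase will not do, and one must align the chosen direction with the non-degenerate eigenspace of $\Sigma_0$, lest $X$ itself become almost surely constant and kill the lower bound. Everything else is routine bookkeeping with dominated convergence and the explicit regression formula.
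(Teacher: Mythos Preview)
Your proof is correct and follows essentially the same approach as the paper: reduce \eqref{eyot} to a lower bound on a quadratic form built from the (truncated) covariance matrix of $(\Re \a', \Im \a')$, rotate so that the non-degenerate direction of this covariance matrix aligns with the real axis, and pass from the truncated to the untruncated covariance via dominated/monotone convergence as $\kappa \to \infty$. The paper phrases the minimization in eigenvalue language (rotating the null space of the covariance matrix into the imaginary axis and using the largest eigenvalue) rather than via the explicit linear-regression residual formula and your case split on $\det \Sigma_0$, but these are cosmetic differences.
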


\begin{proof}  For $\kappa$ sufficiently large, we have $\E |\a|^2 \leq \kappa$, and the
event $|\a| \leq \kappa$ has probability at least $1/\sqrt{\kappa}$.
Let $\a_\kappa$ be the variable $\a$ conditioned on the event $|\a|
\leq \kappa$.  Since $\a$ has non-zero variance, we see that
$\a_\kappa$ will also have non-zero variance for $\kappa$ large
enough.  It will then suffice to show that
$$  \E \Re( z \a_\kappa - w )^2 \geq \Re(z)^2 \frac{1}{\sqrt{\kappa}}$$
after rotating $\a$ by a phase if necessary.  If we write $\b_\kappa := \a_\kappa - \E(\a_\kappa)$,
then we easily compute
$$\E \Re( z \a_\kappa - w )^2 = \E \Re( z\b_\kappa + z\E(\a_\kappa)- w)^2 = \E \Re(z\b_\kappa)^2 + \Re(z\E(\a_\kappa)-w)^2$$
so it suffices to show that for $\kappa$ sufficiently large we have
\begin{equation}\label{zbk}
  \E \Re( z \b_\kappa )^2 \geq \Re(z)^2 \frac{1}{\sqrt{\kappa}}.
\end{equation}
Now set $\b := \a - \E(\a)$ and consider the covariance matrix
$$ \left( \begin{matrix} \E \Re(\b)^2 & \E \Re(\b) \Im(\b) \\ \E \Re(\b) \Im(\b) & \E \Im(\b)^2 \end{matrix} \right).$$
Since $\a$ has finite non-zero variance, we see that this matrix is finite, non-zero, and positive semi-definite. In particular its largest eigenvalue is at least $\delta$ for some $\delta > 0$.  By monotone convergence we then conclude that the covariance matrix
\begin{equation}\label{cov} \left( \begin{matrix} \E \Re(\b_\kappa)^2 & \E \Re(\b_\kappa) \Im(\b_\kappa) \\ \E \Re(\b_\kappa) \Im(\b_\kappa) & \E \Im(\b_\kappa)^2 \end{matrix} \right)
\end{equation}
has largest eigenvalue at least $\delta/2$ for $\kappa$ sufficiently large.

Now fix $\kappa$ large enough so that all the above statements hold,
and also so that $\frac{1}{\sqrt{\kappa}} \leq \frac{\delta}{2}$.
The null space of \eqref{cov} is at most one-dimensional.  By
rotating $\a$ by a phase we may then assume that the null space is
contained in the imaginary axis $\{ \left( \begin{matrix} 0 \\ w
\end{matrix} \right): w \in \R \}$.  Since covariance matrices are
positive semi-definite, we thus have the quadratic form estimate
$$ |u^2 \E \Re(\b_\kappa)^2 + 2uv \E \Re(\b_\kappa) \Im(\b_\kappa) +
 v^2 \E \Im(\b_\kappa)^2| \geq \frac{\delta}{2} u^2,$$
and \eqref{zbk} follows by setting $u= Re(z)$ and $v= Im (z)$.
\end{proof}

Since rotating all entries by the same phase does not change the norm of the inverse,
Theorem \ref{lsv0}  follows from the following theorem.

\begin{theorem}[Least singular value bound]\label{lsv}  Let $A, C_{1}, C_{2}$ be positive constants. There are positive constants $B$ and $C_{3}$  such that the following holds.
Let $\a$ be a random variable with $C_{1}$-controlled second moment and $N_{n}$ be the random matrix of order $n$ whose entries are i.i.d copies of
$\a$. Let $M$ be a deterministic matrix of order $n$ with spectral norm at most $n^{C_{2}}$. Then,
$$ \P( \| (M + N)^{-1} \| \geq n^B ) \le  C_{3} n^{-A}.$$
\end{theorem}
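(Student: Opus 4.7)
The plan is to translate the bound $\|(M+N)^{-1}\| \geq n^B$ into $\sigma_{\min}(M+N) \leq n^{-B}$, where $\sigma_{\min}(M+N) = \inf_{|\v|=1}|(M+N)\v|$. A routine first step controls the operator norm: since $\E\|N_n\|_F^2 = n^2 \E|\a|^2 = O(n^2)$ by the controlled second moment, Markov gives $\|N_n\| \leq n^{A+2}$ except on an event of probability $O(n^{-A})$, so outside this event we may assume $\|M+N\| \leq n^D$ for some $D = D(C_1, C_2, A)$.

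The core of the argument follows the Rudelson--Vershynin dichotomy. Split the unit sphere of $\C^n$ into \emph{compressible} vectors (those within $\rho$ of some $\delta n$-sparse unit vector) and \emph{incompressible} vectors. For compressible $\v$, the operator-norm bound reduces the infimum to an $\eps$-net of the sparse unit sphere of cardinality $\binom{n}{\delta n}(C/\eps)^{O(\delta n)} = 2^{O(\delta n \log(1/\delta \eps))}$. For each fixed $\v$ in this net, the $\kappa$-controlled second moment hypothesis \eqref{eyot} yields (via Esseen's concentration inequality applied coordinate-wise) a uniform single-row small-ball estimate $\P(|\langle R_k, \v\rangle - w| \leq c_0) \leq p_0 < 1/4$. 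Independence across rows then gives $\P(|(M+N)\v| \leq c_0 \sqrt{n}/2) \leq 2^n p_0^{n/2} = e^{-\Omega(n)}$, which beats the net cardinality once $\delta$ is taken small enough.

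For incompressible $\v$, the standard incompressibility lemma converts the infimum into a row-distance bound
\[
\sigma_{\min}(M+N) \;\geq\; c(\delta,\rho)\, n^{-1/2} \min_{1 \leq k \leq n} \dist(R_k, H_k),
\]
where $R_k$ is the $k$th row of $M+N$ and $H_k$ is the span of the remaining rows. Conditioning on the other rows (equivalently, on a unit normal $\v$ to $H_1$), one has $\dist(R_1, H_1) = |c_M + \sum_i \a_i \v_i|$ with $c_M$ deterministic in $M$ and $\v$. The problem thereby collapses to showing that the small-ball probability $\P(|c_M + \sum_i \a_i \v_i|\leq n^{-B+1/2})$ is at most $n^{-A-1}$ with very high probability over the choice of $\v$. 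The Inverse Littlewood--Offord theorem of Section \ref{sec3} is the decisive input: if the small-ball probability exceeds $n^{-A-1}$, then $\v$ must be ``structured'' in that a large fraction of its coordinates lie in a generalized arithmetic progression of volume $n^{O(1)}$. Such structured vectors admit a polynomial-size $\eps$-net $\mathcal{N}$; and for any fixed $\v_0 \in \mathcal{N}$ the event that $\v_0$ is close to the unit normal of $H_1$ forces $|\langle R_k, \v_0 \rangle| \leq \eps \|M+N\|$ for all $k \geq 2$, an event of probability at most $p_0^{n-1}$ by the same single-row small-ball estimate as above. The union bound $|\mathcal{N}| \cdot p_0^{n-1}$ beats $n^{-A-1}$ with room to spare.

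The main obstacle is obtaining \emph{any} prescribed polynomial decay $n^{-A-1}$ in the small-ball probability, rather than merely $n^{-c_0}$ for some fixed $c_0$; this is precisely the near-optimality of the Inverse Littlewood--Offord theorem claimed in Section \ref{sec3}, improving on the earlier bound of \cite{TVsing}, and without this strength one cannot reach an $A$ large enough for the summability required in the strong circular law. A secondary subtlety is that the deterministic $M$ may have entries as large as $n^{C_2}$, but since $M$ contributes only the translation $c_M$ inside the absolute value and small-ball probabilities are translation invariant, the arithmetic structure forced on $\v$ by the Inverse Theorem depends only on the iid random variable $\a$ and not on $M$; this is exactly what permits an arbitrary polynomial bound on $\|M\|$ with no corresponding cost in the exponent.
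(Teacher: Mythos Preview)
Your overall strategy---reduce to a row-distance/normal-vector problem and invoke the Inverse Littlewood--Offord theorem to control the ``bad'' normal vectors---is close in spirit to the paper, but the union bound you describe for the structured case does not close, and this is the heart of the matter.

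The net $\mathcal{N}$ furnished by Theorem~\ref{ilo} is \emph{not} of polynomial size: with $p = n^{-A-1}$ and $\beta = n^{-B+1/2}$ it has cardinality $n^{(-1/2+\eps)n} p^{-n} + \exp(o(n)) \approx n^{(A+1/2+\eps)n}$. Your per-vector bound is $p_0^{\,n-1}$ for a fixed constant $p_0<1$, so the union bound reads roughly $n^{(A+1/2)n}\,p_0^{\,n}$, which diverges (the first factor grows like $e^{cn\log n}$, the second decays only like $e^{-cn}$). To make this work you must arrange that the per-row small-ball probability for the net point $\v'$ is comparable to the \emph{same} $p$ that governs the net size, so that the factors $p^{-n}$ and $p^{n}$ cancel and only the $n^{-n/2}$ gain survives.

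The paper achieves exactly this cancellation, and it does so by a different decomposition. Rather than compressible/incompressible, it splits unit vectors into \emph{poor} ($p_{n^{-B+1/2},\a}(\v)\le n^{-A-1}$) and \emph{rich}. Poor vectors are handled by a short dual argument (Lemma~\ref{lemma:poor}): pass to a left near-null vector $\w$, condition on all rows but the one where $\w$ has its largest coordinate, and use poorness of the conditional near-null vector $\u$ to bound the remaining row. For rich vectors (Lemma~\ref{lemma:rich}) the paper first runs a pigeonhole over a ladder of scales $n^{-B+Cj+1/2}$ to find $j,k$ with $p_{n^{-B+Cj+1/2},\a}(\v)\in[n^{-k\eps},n^{-(k-1)\eps})$ and $p_{n^{-B+C(j+1)+1/2},\a}(\v)\le n^{\delta}p$ for some $\delta<1/2$. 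Now apply Theorem~\ref{ilo} at \emph{this} scale with this $p=n^{-k\eps}$: the net has size $n^{(-1/2+\eps)n}p^{-n}$, while for each net point $\v'$ the per-row event $|X_i\cdot\v'|\le n^{-B+Cj+1/2+\gamma+\eps}$ forces $|X_i\cdot\v|\le n^{-B+C(j+1)+1/2}$ by the triangle inequality, hence has probability at most $n^{\delta+\eps}p$. The union bound then reads $n^{(-1/2+\eps)n}p^{-n}\cdot(n^{\delta+\eps}p)^{n'}\binom{n}{n'}$, in which the $p$'s cancel and the residual $n^{(-1/2+\delta+O(\eps))n}$ is exponentially small precisely because $\delta<1/2$. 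This scale-matching step is the missing idea in your sketch; without it the Inverse Littlewood--Offord theorem alone does not yield a usable net.
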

\begin{remark}

   Our arguments give an explicit dependence of $B$ in terms
    of $A$ and $C_{2}$. One can set $B$ to be roughly $2AC_{2} $. A more exact
    dependence can be obtained with considerably more technical
   details. Since for the proof of the circular law, any constant $B$ suffices,
   we do not go into this matter here and will discuss it elsewhere.
\end{remark}

\begin{remark}
Notice that the assumptions in Theorem \ref{lsv} are weaker than the assumption of Theorem \ref{circular}.  We do not require $\a$ to have mean $0$ and bounded
$(2+\eta)^\th$ moment.
In the proof of  Theorem \ref{circular}, these extra assumptions are  needed in order to
repeat the approach of Bai, and are unrelated to the pole problem or Theorem \ref{lsv}.
\end{remark}

\begin{remark}\label{dom} One can relax somewhat the hypothesis that the entries $\a_{ij} $ of $N_{n}$ are i.i.d copies of  $\a$. It is sufficient to assume the following

\begin{itemize}
\item   $\a_{ij}$  are  \emph{dominated} by a single distribution $\a$ in the Fourier-analytic sense that $|\E( e^{2\pi \sqrt{-1} \Re( \xi \a_{ij} )} )| \leq \E( e^{2\pi \sqrt{-1} \Re( \xi \a )} )$ for all complex numbers $\xi$.
\item   $\a$ has  $\kappa$-controlled second moment for some fixed $\kappa$.
\end{itemize}

 This refinement can be extracted without too much difficulty from the proof in this paper, which ultimately relies on Fourier-analytic methods.
 Using this refinement and following \cite[Chapter 10.8.2]{bai},  we can extend Theorem \ref{circular} for the case the the entries of $N_{n}$ are independent, but not necessarily identically distributed, as mentioned in the introduction.
  \end{remark}

  In order to deal with sparse random matrices, we prove the following variant of Theorem \ref{lsv0}.

  \begin{theorem} [Least singular value for sparse matrices] \label{lsv-sparse}
   Let $A>1, C_{1}, C_{2}, \alpha$ be positive constants. There are positive constants $B$ and $C_{3}$ depending on $A,C_{1}, C_{2}, \alpha$ such that the following holds.
Let $\a$ be a random variable with $C_{1}$-controlled second moment
and let $N_{n, \rho}$ be the random matrix of order $n$ defined as
in Theorem \ref{circular-sparse}. Let $M$ be a deterministic matrix
of order $n$ with spectral norm at most $n^{C_{2}}$. Then,
$$ \P( \| (M + N_{n,\rho})^{-1} \| \geq n^B ) \le C_{3} n^{-A}.$$
  \end{theorem}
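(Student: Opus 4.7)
The plan is to follow the architecture of Theorem \ref{lsv} verbatim, with the i.i.d.\ entries $\a_{ij}$ replaced throughout by the sparsified entries $\tilde\a_{ij} := \I_{ij,\rho}\a_{ij}$. As in the dense case, $\|(M+N_{n,\rho})^{-1}\|\ge n^B$ is equivalent to the existence of a unit vector $v\in\C^n$ with $\|(M+N_{n,\rho})v\|\le n^{-B}$. Using the standard reformulation as a distance from a random row of $M+N_{n,\rho}$ to the span of the remaining rows, the task reduces to controlling, uniformly in a unit vector $v$ and a deterministic shift $w\in\C$, the small ball probability $\P(|\sum_j \tilde\a_{ij} v_j - w|\le t)$, followed by an $\eps$-net argument on the sphere.

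The heart of the work is the sparse counterpart of the Inverse Littlewood--Offord theorem of Section \ref{sec3}. One needs to show that if the above small ball probability is not extremely small for some incompressible $v$, then $v$ is contained, up to a thin exceptional part, in a generalized arithmetic progression of controlled rank and volume. Two natural routes are available: (a) condition on the random support of the $\I_{ij,\rho}$, whose typical row intersection has size $\Theta(n\rho)=\Theta(n^\alpha)$, and apply the dense Inverse Littlewood--Offord theorem to the surviving sums $\sum_{j\in S_i}\a_{ij}v_j$; or (b) absorb the Bernoulli factor directly into the Fourier-analytic machinery of Sections \ref{concprob}--\ref{sec10}, treating $\tilde\a$ as a new random variable whose characteristic function is $(1-\rho)+\rho\phi_\a$, and rerunning the forward and inverse Littlewood--Offord estimates for this shifted characteristic function.

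Once a sparse inverse theorem with an appropriate quantitative bound is in hand, the least singular value estimate proceeds exactly as in Section \ref{lsv-sec}: split the sphere into compressible and incompressible vectors, handle the compressible part via a small net together with a crude polynomial tail bound on $\|M+N_{n,\rho}\|$ (which follows from $\E|\a|^2<\infty$ and Markov applied to the Frobenius norm, even in the sparse regime), and handle the incompressible part by applying the sparse small ball bound row by row and taking a union bound over the net.

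The main obstacle is the sparse inverse step itself. Sparsification effectively reduces the number of active coordinates per row from $n$ to $\Theta(n^\alpha)$, so the anti-concentration rates weaken by a factor depending on $\alpha$; one must check that this loss is still enough to beat the entropy of the net on the sphere and deliver a final bound of the form $n^{-A}$ for any prescribed $A>1$. In route (a) this amounts to proving a uniform Inverse Littlewood--Offord theorem valid for all sufficiently large effective supports $S_i$; in route (b) it forces one to track how the atom at $0$ in $\tilde\a$ interacts with the level-set and dyadic halving arguments of Sections \ref{concprob}--\ref{sec9}. Either way, no new combinatorial idea beyond those already developed for Theorem \ref{lsv} should be required: the task is careful quantitative bookkeeping of the factor $\rho=n^{-1+\alpha}$ at each step, and the constants $B$ and $C_3$ will then depend on $A,C_1,C_2,\alpha$ as asserted.
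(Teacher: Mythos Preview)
Your high-level plan is right, and route (b) is precisely what the paper does: the Bernoulli factor is absorbed into the Fourier machinery (Lemma \ref{concball-sparse}), yielding a sparse Inverse Littlewood--Offord theorem (Theorem \ref{ilo-sparse}), and the poor-vector half of the argument (your distance-to-hyperplane reduction) carries over verbatim.

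But your claim that ``no new combinatorial idea beyond those already developed for Theorem \ref{lsv} should be required'' overlooks a real obstacle on the rich-vector side. In the dense proof of Lemma \ref{lemma:rich}, the second term $\exp(o(n))$ in the ILO net size is killed by the crude bound \eqref{crude}, $p_{\ldots,\a}(\v)\le 1-\delta'$, tensorized over the rows. In the sparse setting this degrades to $1-\delta'\rho$, and $(1-\delta'\rho)^n\approx\exp(-\delta' n^\alpha)$ is no longer strong enough against the relevant entropy when the small-ball probability of $\v$ is close to $1$ (the regime $p\ge n^{-O(\eps)}$ in the paper's pigeonhole parametrization). The paper treats this range with a genuinely new step: one first shows that any such $\v$ is \emph{compressed}, with at most $n^{O(\eps)}/\rho$ coordinates above a given threshold; one then runs a pigeonhole over scales to locate a level $B''$ at which the number of significant coordinates is $\Theta(n^{m\eps})$ for some bounded $m$; after rounding $\v$ to a Gaussian-integer vector $\w$ at that scale, the net of admissible $\w$ has size $\exp(O(n^{(m+1)\eps}))$, while each fixed $\w$ contributes probability at most $\exp(-\Omega(n^{(m-1)\eps}\rho n))$, and the hypothesis $\rho n=n^\alpha\gg n^{2\eps}$ closes the union bound. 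This compressed-vector-plus-pigeonhole argument is not bookkeeping and is absent from your outline; your proposed handling of the ``compressible part via a small net together with a crude polynomial tail bound'' runs directly into this gap, since in the sparse regime the per-row anti-concentration available for a generic compressible vector is only $1-O(\rho)$.
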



To conclude this section, let us derive a simple corollary of Theorem \ref{lsv-sparse}.

\begin{corollary}[Condition number bound]   Let $A, C_{1}, C_{2}, \alpha$ be positive constants. There are positive constants $B$ and $C_{3}$  such that the following holds.
Let $\a$ be a random variable with $C_{1}$-controlled second moment and $N_{n, \rho}$ be the random matrix of order $n$
defined as in Theorem \ref{circular-sparse}. Let $M$ be a deterministic matrix of order $n$ with spectral norm at most $n^{C_{2}}$. Then,

$$ \P( \| M+N_{n,\rho} \| \| (M + N_{n,\rho})^{-1} \| \geq n^B ) \le C_{3} n^{-A}.$$
\end{corollary}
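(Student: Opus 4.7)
The plan is to combine Theorem \ref{lsv-sparse}, which already gives control on $\|(M+N_{n,\rho})^{-1}\|$, with a matching polynomial upper tail bound on the spectral norm $\|M+N_{n,\rho}\|$, and then take a union bound. By the triangle inequality, $\|M+N_{n,\rho}\| \le \|M\| + \|N_{n,\rho}\| \le n^{C_2} + \|N_{n,\rho}\|$, so it suffices to prove a bound of the form $\|N_{n,\rho}\| \le n^{B'}$ with failure probability at most $n^{-A}$, for some $B'=B'(A,C_1,\alpha)$.

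For this crude norm bound I would use the trivial Frobenius estimate $\|N_{n,\rho}\|^2 \le \|N_{n,\rho}\|_F^2 = \sum_{i,j} |\I_{i,j,\rho}\,\a_{i,j}|^2$, together with Markov's inequality. Using that $\a$ has $C_1$-controlled second moment (so $\E|\a|^2 \le C_1$) and that $\I_{i,j,\rho}$ is independent of $\a_{i,j}$ with mean $\rho \le 1$, we get $\E \|N_{n,\rho}\|_F^2 \le n^2\rho\, C_1 \le C_1 n^2$. Markov's inequality then yields
\[
 \P\!\left( \|N_{n,\rho}\| \ge \sqrt{C_1}\, n^{1+A/2} \right) \le \P\!\left( \|N_{n,\rho}\|_F^2 \ge C_1 n^{2+A} \right) \le n^{-A}.
\]
On this good event, $\|M+N_{n,\rho}\| \le n^{C_2}+\sqrt{C_1}\,n^{1+A/2} \le n^{B''}$ for any $B'' > \max(C_2,\,1+A/2)$ and $n$ large.

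Now I would invoke Theorem \ref{lsv-sparse} with parameters $A$, $C_1$, $C_2$, $\alpha$ to produce constants $B_0$ and $C_3'$ such that $\P(\|(M+N_{n,\rho})^{-1}\| \ge n^{B_0}) \le C_3' n^{-A}$. Setting $B := B_0 + B''$ and taking the union of the two bad events gives
\[
 \P\!\left( \|M+N_{n,\rho}\|\,\|(M+N_{n,\rho})^{-1}\| \ge n^{B} \right) \le C_3' n^{-A} + n^{-A} \le C_3\, n^{-A}
\]
for $C_3 := C_3'+1$, which is the claimed estimate.

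There is no real obstacle here: the only nontrivial ingredient is Theorem \ref{lsv-sparse} itself, which is assumed. The operator norm bound is deliberately crude (polynomial rather than the sharp $O(\sqrt{n\rho})$) because Theorem \ref{lsv-sparse} already tolerates any polynomial loss in the exponent $B$. The mild point to watch is simply that the second-moment hypothesis on $\a$ is enough to drive the Markov estimate on $\|N_{n,\rho}\|_F^2$, so no stronger moment assumption on $\a$ is needed beyond what Theorem \ref{lsv-sparse} already requires.
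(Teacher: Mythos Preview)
Your proof is correct and follows essentially the same approach as the paper: a crude polynomial bound on $\|N_{n,\rho}\|$, combined with Theorem \ref{lsv-sparse} via a union bound. The only cosmetic difference is that the paper bounds the norm by the maximum row $\ell^1$-sum and applies Chebyshev entrywise, whereas you use the Frobenius norm and Markov on $\|N_{n,\rho}\|_F^2$; both yield equivalent polynomial bounds.
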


\begin{proof} A simple application of Chebyshev's inequality shows that $$\P( |\a| \geq n^{A/2 + 1} ) \ll_{C_1} n^{-A-2}. $$
Since $\|N_{n,\rho}\|$ is bounded from above by $\max_{i} \sum_{j=1}^{n } |\a_{ij}|$,  we have that
 $$\P( \|N_{n,\rho}\| \geq n^{A/2 + 2} ) \ll_{C_1} n^{-A}$$  by the union bound.
 Combining this with the polynomial bound on $\|M \|$ and with Theorem \ref{lsv-sparse}, the claim follows by choosing $B$ sufficiently large.
\end{proof}

The condition number $\|M \| \|M^{-1}\|$ of a matrix $M$ plays a crucial role in numerical linear algebra (see \cite{BT}, for instance). The above corollary implies that if one perturbes a fixed matrix $M$ by a (very general) sparse random matrix $N_{n}$,  the condition number of the resulting matrix will be relatively small with high probability. This fact has some nice applications in theoretical computer science (see \cite{TVstoc} or \cite{ST}, for example).

\section{Inverse Littlewood-Offord theorems}\label{sec3}

Let us consider a toy case in order to illustrate the ideas behind the proof of Theorem \ref{lsv}. Assume, for a moment, that $M=0$ and $\a \equiv N(0,1)$ is real Gaussian.
In this case, we talk about the least singular value of the random matrix $N_{n}$ whose entries are i.i.d real Gaussian.
Let $X_{i}$ be the row vectors of $N_{n}$ and $d_{i}$ be the distance from $X_{i}$ to the hyperplane $H_{i}$  spanned by $X_{j}$, $j \neq i$. The least singular value of
$N_{n}$ is close (up to factors of $n^{O(1)}$) to $\min_{1\le i\le n} d_{i}$. Thus, our goal is to prove that with high probability, each of the $d_{i}$ is bounded away from 0.

In this Gaussian case, the task is simple since, thanks to symmetry, the distribution of $d_{i}$ does not depend on the vectors $X_{j}$, $j \neq i$. Indeed, $d_i$ has the same distribution as the distance from a Gaussian vector to a fixed hyperplane. This variable is well understood and satisfies the inequality

$$\P (d_{i} \le  n^{-A-1/2} ) = O(n^{-A}) $$

\noindent for any fixed positive constant $A$.  This leads to the conclusion of  Theorem \ref{lsv} in this simple case.

However, the general case is much more difficult. For example, if the entries of $N$ are iid Bernoulli, it is already non-trivial to prove $N_{n}$ is asymptotically almost surely non-singular (i.e. that with probability $1-o(1)$, one has $d_i \neq 0$ for all $i$).
The point here is that one can no longer fix $X_{j}, j \neq i$. As a matter of fact, the distribution of the distance $d_{i}$ depends heavily on the position of the hyperplane $H_{i} $ spanned by the $X_{j}, j \neq i$.  For example, let $\a$
be Bernoulli and consider the following two situations

\begin{itemize}

\item $H_{i}$ has normal vector $(\frac{1}{\sqrt n},  \cdots, \frac{1}{\sqrt n})$. In this case, $d_{i}=0$ with probability $O(\frac{1}{\sqrt n})$.

\item $H_{i}$ has normal vector $(\frac{1}{\sqrt 2}, \frac{1}{\sqrt 2}, 0, \dots, 0)$.  In this case, $d_{i}=0$ with probability $\frac{1}{2}$.
\end{itemize}

A hyperplane $H$ is, in some sense, {\it bad } for us  if  the distance from a random (row) vector to $H$ is small with non-negligible probability.  It is important to understand the bad hyperplanes. Notice that if $\v=(v_{1}, \dots, v_{n})$ is the unit  normal vector of $H$, then the distance  in question is exactly the random variable

$$  | v_1 \a_1 + \ldots + v_n \a_n|, $$

\noindent where $\a_{i}$ are i.i.d. copies of $\a$.

This naturally leads to introducing the following concept.

\begin{definition}[Small ball probability]\label{sbp}  Let $\a$ be a complex random variable, and let
$\v = (v_1, \ldots, v_n)$ be a tuple of complex numbers.  We define the \emph{random walk} $W_{\a}(\v)$ to be the complex random variable
\begin{equation}\label{wav}
 W_\a(\v) := v_1 \a_1 + \ldots + v_n \a_n
\end{equation}
where $\a_1,\ldots,\a_n \equiv \a$ are iid copies of $\a$.  For any $z \in \C$ and $r > 0$, we let
 $B(z,r)$ denote the closed disk of radius $r$ centered at $z$.
  For any $r \geq 0$, we define the \emph{small ball probability}
$$ p_{r,\a}(\v) := \sup_{z \in \C} \P( W_\a(\v) \in B(z,r) ). $$
\end{definition}
Intuitively, we expect the small ball probability $p_{r,\a}(\v)$ to be quite small for ``most'' tuples $\v$.  The question, of course, is to quantify ``most''.

A classical theorem of Littlewood and Offord  \cite{LO} (see also \cite{erdos-lo}) shows that if $\a$ is Bernoulli, and all $|v_{i}| \ge 1$, then
$p_{1, \a} (\v) = O(n^{-1/2})$. There are several extensions of this result. They, typically, improve upon the bound $O(-n^{1/2})$, under extra assumptions on the  $v_{i}$. We are going to refer to results in this spirit as {\it forward} Littlewood-Offord theorems.

For our purposes, we  need {\it inverse} Littlewood-Offord theorems. Such a theorem
is supposed to give a  characterization of those vectors $\v$, where  $p_{r,\v}$ is larger than some lower bound.
The study of inverse Littlewood-Offord theorems was started in \cite{TVsing}, where
 we investigated the case when
$\a$ has discrete support. A new result in this spirit was recently obtained in \cite{RV}, where the authors investigated sub-Gaussian distributions, as well as distributions with bounded third or fourth moments.

In the current situation, we only assume that $\a$ has $O(1)$-controlled second moment.  The weakness of this assumption is a major obstacle and makes the proof much more
complicated. It is still possible to obtain a reasonably strong characterization of $\v$, given that $p_{r, \a} (\v)$ is large. However, this characterization
is somewhat technical to state and so we will only explicitly state here a corollary of it, which will be sufficient for our purpose of proving the least singular value bound and the circular law.

Let $\a$ be a complex random variable. Let $n$ be a positive integer and $\beta, p$ be positive numbers that may depend on $n$.
 Let $S_{n, \a, \beta,p}$ be the set of all unit vectors $\v = (v_1, \ldots, v_n) \in \C^n$ such that one has the concentration bound
$$ p_{\beta,\a}(\v) \geq p.$$
We give $\C^n$ the $l^\infty$ norm
$$ \| (v_1,\ldots,v_n) \|_\infty := \sup_{1 \leq i \leq n} |v_i|.$$

\begin{theorem}[Inverse Littlewood-Offord theorem]\label{ilo}
Let $\a$ be a complex random variable which has $\kappa$-controlled
second moment for some $\kappa >0$. Let $0 < \eps \le 1$.  Then, for
all $n$ which are sufficiently large depending on $\kappa, \eps$ and
$\beta \ge \exp(- n^{\eps/2})$ and $p= n^{-O(1)}$, there is a set
$S' \subset \C^n$ of size at most $n^{(-1/2 +\eps)n } p^{-n} +
\exp(o(n))$ such that for any $\v \in S_{n, \a, \beta, p}$ there is
$\v' \in S'$ such that $\|\v-\v'\|_{\infty} \le \beta$. In other
words, $S_{n, \a, \beta, p}$ has a maximal $\beta$-net in the $l^\infty$
norm of size at most $ n^{(-1/2 +\eps)n } p^{-n} + \exp(o(n))$.
\end{theorem}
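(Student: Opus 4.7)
The plan is to adapt the inverse Littlewood-Offord framework of \cite{TVsing} to the continuous setting in which $\a$ only has a $\kappa$-controlled second moment, with Fourier analysis playing the central role. First I would express the small ball probability via an Esseen-type concentration inequality:
$$p_{\beta,\a}(\v) \lesssim \int_{|\xi| \le 1/\beta} \prod_{i=1}^n |\phi_\a(v_i \xi)|\, d\xi,$$
where $\phi_\a(\xi) := \E \exp(2\pi \sqrt{-1}\, \Re(\xi \a))$ is the characteristic function of $\a$ viewed as a real two-dimensional random vector. The $\kappa$-controlled second moment hypothesis \eqref{eyot}, applied on the truncation event $|\a| \le \kappa$, translates into the quantitative lower bound
$$1 - |\phi_\a(\xi)|^2 \gtrsim_\kappa \E\, \bigl\|\Re(\xi \a)\bigr\|_{\R/\Z}^2\, \I(|\a| \le \kappa),$$
so that each factor $|\phi_\a(v_i \xi)|$ is strictly below $1$ unless $\Re(v_i \xi)$ is close to an integer on a non-negligible portion of the support of $\a$.

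Assuming now $p_{\beta,\a}(\v) \ge p$, the Fourier integral above is $\ge p$, which forces the integrand to remain bounded away from zero on a set $\Xi \subset \C$ of measure $\gtrsim p \beta^{-2}$. For $\xi \in \Xi$, a majority of indices $i$ must then satisfy $\Re(v_i \xi) \in \Z + O(n^{-1/2})$ in an average sense. I would then invoke the forward Littlewood-Offord estimate (Theorem \ref{flot}) to convert this Fourier concentration into additive structure: along the lines of \cite{TVsing}, the set $\Xi$ must cluster inside a generalized arithmetic progression $Q$ of rank $O_\eps(1)$, and dually the bulk of the coordinates $v_i$ must lie within $\ell^\infty$-distance $\beta$ of a GAP $Q^*$ of volume $O(n^{1/2+\eps} p^{-1})$.

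The final step is a counting argument. Fixing the combinatorial data of $Q^*$ --- its rank, its generators, and the sparse set of exceptional indices where $v_i \notin Q^*$ --- gives $\exp(o(n))$ choices; here one uses the hypothesis $\beta \ge \exp(-n^{\eps/2})$ to keep the generator precision manageable. For each such $Q^*$, the number of coordinate assignments is at most $|Q^*|^n \lesssim (n^{-1/2+\eps} p^{-1})^n$ after imposing the unit-norm constraint. Summing yields a $\beta$-net of size at most $n^{(-1/2+\eps)n} p^{-n} + \exp(o(n))$, as claimed.

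The main obstacle I anticipate is twofold. First, because only the second moment of $\a$ is controlled, $\phi_\a$ need not decay at infinity, so the analysis must rely on the truncation $\I(|\a| \le \kappa)$ and restrict all Fourier computations to $|\xi| \lesssim 1/\beta$; this forces a delicate small-scale analysis, in sharp contrast to the sub-Gaussian situation treated in \cite{RV}. Second, the bound is essentially sharp in its $-1/2+\eps$ exponent, so the extraction of GAP structure cannot afford to be wasteful: an $\eps$-efficient inverse Freiman-type theorem is needed, which is precisely why the preparatory material in Sections \ref{concprob}--\ref{sec9} must develop careful quantitative estimates rather than merely qualitative additive structure.
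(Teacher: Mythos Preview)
Your overall architecture---Fourier representation of the small ball probability, use of the $\kappa$-controlled second moment to bound the characteristic function, and invocation of the forward Littlewood-Offord estimate (Theorem \ref{flot}) to extract GAP structure---matches the paper's. But the final counting step contains a genuine gap, and it is exactly the step where the $-1/2$ exponent is won.

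You assert that the bulk of the $v_i$ lie near a GAP $Q^*$ of volume $O(n^{1/2+\eps}p^{-1})$, and then claim that ``imposing the unit-norm constraint'' reduces $|Q^*|^n$ from $(n^{1/2+\eps}p^{-1})^n$ to $(n^{-1/2+\eps}p^{-1})^n$. This is a factor-of-$n^n$ saving from a single scalar constraint, which cannot be correct: the unit sphere in $\C^n$ has codimension one, not $n$. Without that saving your net has size $(n^{1/2+\eps}p^{-1})^n$, which is useless. The $n^{-1/2}$ per coordinate has to come from somewhere else.

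In the paper the mechanism is Lemma \ref{lemma:weakz} (structure of weak elements), combined with an iterative algorithm that is absent from your sketch. One does not build a dual GAP in Fourier space; instead one selects generators $w_1,\ldots,w_r$ \emph{from among the coordinates $V_j$ themselves}, at each step choosing a $V_j$ that increases the dispersion $\D(\GAP((w_1,\ldots,w_r),k))$ by a factor $n^\eps$ while preserving $\P_1(\V^{[r]} w_1^{k^2}\cdots w_r^{k^2}) \gg p$ (this is where Lemma \ref{pumar-lemma}(v) and Theorem \ref{flot} enter). The process halts after $r=O_\eps(1)$ steps with $\D(Q) \ll n^{O(\eps)}p^{-1}$, and at that point every non-exceptional $V_j$ is \emph{weak}: adding it to $Q$ with dimension $k=n^{1/2-\eps}$ increases $\D$ by at most $l=n^\eps$. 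Lemma \ref{lemma:weakz} then confines each such $V_j$ to a set of size $1 + O(l^4 k^{-1}\D(Q)) = 1 + O(n^{-1/2+O(\eps)}p^{-1})$. The crucial $k^{-1}$ factor---which is where the $n^{-1/2}$ lives---comes from a double-pigeonhole argument inside that lemma (two distinct representations of $z$, one at scale $k$ and one at scale $l$), not from any norm constraint on $\v$.
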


\begin{theorem}[Inverse Littlewood-Offord theorem for sparse random variables]\label{ilo-sparse}
Let $\a$ be a complex random variable which has $\kappa$-controlled
second moment for some $\kappa >0$. Let $0 < \eps \le 1$.  Then, for
all $n$ which are sufficiently large depending on $\kappa, \eps$ and
$\beta \ge \exp(-n^{\eps/2})$ and $p= n^{-O(1)}$, all $1/n < \mu \le
1$, and all $m$ between $n^\eps$ and $n^{1-\eps} \mu$ there is a set
$S' \subset \C^n$ of size at most $ n^{O(\eps)n} (p \sqrt{m})^{-n} +
\exp(n^{O(\eps)} m / \mu)$ such that for any $\v \in S_{n,
\a\I_{\mu}, \beta, p}$ there is $\v' \in S'$ such that such that
$\|\v-\v'\|_{\infty} \le \beta$. In other words, $S_{n, \a \I_{\mu},
\beta, p}$ has a maximal $\beta$-net in the $l^\infty$ norm of size at most
$ n^{O(\eps)n} (p \sqrt{m})^{-n} + \exp(n^{O(\eps)} m / \mu)$.
\end{theorem}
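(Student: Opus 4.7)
The plan is to adapt the proof of Theorem \ref{ilo}, the dense inverse Littlewood--Offord theorem, using the identity
\[
\phi_{\a\I_\mu}(\xi) = (1-\mu) + \mu\,\phi_\a(\xi),
\]
which yields the bound $|\phi_{\a\I_\mu}(\xi)|^2 \leq 1 - c\mu(1-\mu)(1 - \Re\phi_\a(\xi))$ for an absolute $c>0$. Every estimate in the dense proof that exploits Fourier decay of $\phi_\a$ carries over to $\a\I_\mu$, but at the cost of a factor of $\mu$ in the decay rate. This single $\mu$-loss is the source of every modification; the free parameter $m$ will then be used to balance the two competing effects it creates.

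First I would apply Esseen's inequality to the hypothesis $p_{\beta, \a\I_\mu}(\v) \geq p$, obtaining
\[
p \ll \beta \int_{|\xi| \leq 1/\beta} \exp\Bigl(-c\mu \sum_{i=1}^n (1 - \Re\phi_\a(v_i\xi))\Bigr)\,d\xi,
\]
so on the effective Fourier support, the sum $T(\xi) := \sum_i (1-\Re\phi_\a(v_i \xi))$ is of size $O(\mu^{-1}\log(1/p))$. I would then feed this level-set bound into a sparse analogue of the forward Littlewood--Offord theorem (Theorem \ref{flot}) in order to extract structure: apart from an exceptional set $E \subset \{1,\ldots,n\}$ of coordinates with $|E| = O(n^{O(\eps)} m/\mu)$, the remaining $v_i$'s lie $\beta$-close to a generalized arithmetic progression $Q$ of rank $O(1)$ and controlled volume. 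The exceptional coordinates can be approximated by a crude $\beta$-net of the unit disk, contributing at most $\exp(n^{O(\eps)} m/\mu)$ to the net, which yields the second term in the stated bound.

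Having localized the structured coordinates to a GAP $Q$, I would build the first term of the net by covering each structured $v_i$ by a nearby element of $Q$. The CLT-type small ball probability for a random walk of length $m$ is $\Theta(m^{-1/2})$, so each structured coordinate admits at most $O((p\sqrt m)^{-1})$ viable nearby GAP elements; multiplying across the $n$ coordinates and absorbing the rank/volume slack of $Q$ into a factor of $n^{O(\eps)n}$ produces the term $n^{O(\eps)n}(p\sqrt m)^{-n}$.

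The main obstacle is calibrating $m$ so that both terms remain subexponential in $n$: the upper bound $m \leq n^{1-\eps}\mu$ forces $m/\mu \leq n^{1-\eps}$, so that the exceptional term is $\exp(o(n))$, while the lower bound $m \geq n^\eps$ is what makes the sparse forward Littlewood--Offord theorem trigger meaningfully. A secondary technical point is that the Chernoff-type control on the number of active indices $\I_{i,\mu}=1$ inside a length-$m$ window must be arranged so that all implied constants depend only on $\eps$ and $\kappa$, not on $\mu$; without this uniformity the sparse forward Littlewood--Offord bounds would degrade as $\mu \to 1/n$, spoiling the claimed range of $\mu$.
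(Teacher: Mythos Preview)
Your final numerology is right --- an exceptional set of size $O_\eps(m/\mu)$ and a per-coordinate factor $O(n^{O(\eps)}(p\sqrt m)^{-1})$ --- but the mechanisms you invoke for both are not the ones that actually deliver those numbers, and the justifications you wrote would not work as stated.

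Theorem \ref{flot} is not a structure theorem: it does not say that most of the $v_i$ lie near a GAP. It is a concentration \emph{upper} bound $\P_\mu(\cdot) \ll_{\eps,r} \D(Q)^{-1+\eps}$, used in the opposite direction, namely to bound from above the dispersion of a GAP built out of a few selected coordinates. The structure is extracted by the greedy algorithm of Section \ref{sec10}: starting from $\P_\mu(\V) \gg p$ (via Lemma \ref{concball-sparse}, not Esseen), one iteratively picks generators $w_1,\ldots,w_r$ from $\V$ using Lemma \ref{pumar-lemma}(v), now with the parameter $k := \sqrt{m/\mu}$ in place of $n^{1/2-\eps}$. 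Theorem \ref{flot} then yields
\[
\D\bigl(\GAP((w_1,\ldots,w_r),\sqrt{\mu}\,k)\bigr) \ll n^{O(\eps)} p^{-1},
\]
the point being that the GAP has dimensions $\sqrt{\mu}\,k = \sqrt{m}$. The exceptional set consists of the at most $r k^2 = O_\eps(m/\mu)$ coordinates discarded by the algorithm; this, and not a Chernoff count of active indices $\I_{i,\mu}=1$, is what produces the $\exp(n^{O(\eps)} m/\mu)$ term.

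The per-coordinate bound does not come from a ``CLT-type small ball probability for a random walk of length $m$''; there is no walk of length $m$ in the argument, and that heuristic runs in the wrong direction (it would bound $p$ above, not count GAP elements). What happens is that every non-exceptional $V_j$ satisfies
\[
\D\bigl(\GAP((w_1,\ldots,w_r,V_j),\sqrt{\mu}\,k)\bigr) < n^\eps \D\bigl(\GAP((w_1,\ldots,w_r),\sqrt{\mu}\,k)\bigr),
\]
i.e.\ $V_j$ is a \emph{weak} element in the sense of Section \ref{sec9}. Lemma \ref{lemma:weakz} then gives a $24$-net for such $V_j$ of size $1 + O_r\bigl(l^4 (\sqrt{\mu}\,k)^{-1} \D(Q)\bigr) = 1 + O\bigl(n^{O(\eps)} (\sqrt m)^{-1} p^{-1}\bigr)$. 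The saving factor $(\sqrt{\mu}\,k)^{-1} = (\sqrt m)^{-1}$ arises from the combinatorial doubling argument inside that lemma, not from any probabilistic estimate; this is precisely why $k$ is chosen as $\sqrt{m/\mu}$.
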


\begin{remark}  If one sets $m = n^{1-C\eps} \mu$ for some absolute constant $C$ then the conclusion of Theorem \ref{ilo-sparse} is similar to that in Theorem \ref{ilo} except for the extra term $\sqrt \mu$ in Theorem \ref{ilo-sparse}.  However, for our applications it will be slightly more convenient to choose $m$ at the other extreme, thus $m = n^\eps$.  The main point here is that the size of $S_{n,\a,\beta,p}$ (or $S_{n,\a \I_\mu,\beta,p}$) tends to be much smaller than $p^{-n}$.
\end{remark}

\begin{definition}[Entropy]  Let $A$ be a precompact subset of a metric space $X$, and let $\eps > 0$.  We define the \emph{internal metric entropy} $\N_\eps(A)$ to be the cardinality of the largest $\eps$-net in $A$ (i.e. a set $B \subset A$ where any two elements in $B$ are separated by distance $\eps$).  We define the \emph{external metric entropy} $\N'_\eps(A)$ to be the least number of closed $\eps$-balls in $X$ needed to cover $A$.
\end{definition}

One easily verifies that
$$ \N_{2\eps}(A) \leq \N'_\eps(A) \leq \N_\eps(A),$$
and furthermore in the complex plane $X=\C$ we have $\N_{2\eps}(A) =\Theta (\N_\eps(A))$. As constant factors will not play any important role,  the two notions of entropy will be essentially equivalent for our purposes.

Since $\| \v\| _{\infty} \ge n^{-1/2} \|\v \|$,  we have the
following corollary.

\begin{corollary}  \label{cor:lsv}
Let $\a$ be a complex random variable which has $\kappa$-controlled
second moment, for some constant $\kappa >0$. Let $\eps$ be an
arbitrary positive constant. Then for any positive numbers $\mu,
\beta, p \le 1$ and all sufficiently large $n$ we have
$$ \N_{\beta n^{1/2} }(S_{n, \a, \beta, p}) \le n^{(-1/2 +\eps)n } p^{-n} + \exp(o(n)) $$
\noindent and
$$ \N_{\beta n^{1/2} }(S_{n, \a \I_{\mu}, \beta, p} )\le n^{(-1/2 +\eps)n } (p \sqrt \mu)^{-n} + \exp(o(n)) $$
\end{corollary}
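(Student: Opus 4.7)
The plan is to deduce both bounds directly from the inverse Littlewood-Offord theorems above by converting their $\ell^\infty$-statements into $\ell^2$-statements via the elementary inequality $\|\v\|_\infty \ge n^{-1/2}\|\v\|_2$ valid for all $\v \in \C^n$. The key observation is that any $\beta n^{1/2}$-separated subset of $\C^n$ in the $\ell^2$ metric is automatically $\beta$-separated in the $\ell^\infty$ metric, so
\[
 \N_{\beta n^{1/2}}(A) \;\le\; \N_\beta^{\ell^\infty}(A) \qquad \text{for every } A \subset \C^n,
\]
where the left side is the default (Euclidean) internal entropy and the right side is measured in the $\ell^\infty$ metric. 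This is the only geometric ingredient.

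The first bound is then immediate: Theorem \ref{ilo} provides a maximal $\beta$-net for $S_{n,\a,\beta,p}$ in the $\ell^\infty$ norm of the advertised size, which by definition upper bounds $\N_\beta^{\ell^\infty}(S_{n,\a,\beta,p})$, and hence by the comparison above also bounds $\N_{\beta n^{1/2}}(S_{n,\a,\beta,p})$.

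For the second bound, I would invoke Theorem \ref{ilo-sparse} with its internal accuracy parameter (call it $\eps_0$) chosen much smaller than the $\eps$ of the corollary, and with its free parameter $m$ placed at the upper end of the allowed range, roughly $m = n^{1 - C\eps_0}\mu$ for a sufficiently large absolute constant $C$. (The degenerate range $\mu \le n^{-1+2\eps_0}$ can be dispatched separately, since there $(p\sqrt{\mu})^{-n}$ is already so large that it dominates any trivial covering of the unit $\ell^\infty$-ball.) With this choice, $(p\sqrt{m})^{-n} = (p\sqrt{\mu})^{-n}\, n^{-(1-C\eps_0)n/2}$, so multiplying by the prefactor $n^{O(\eps_0)n}$ yields a bound of the shape $n^{(-1/2+\eps)n}(p\sqrt{\mu})^{-n}$ once $\eps_0$ is taken small enough that the implicit $O(\eps_0)$ absorbs into $\eps$. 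The additive exponential term becomes $\exp(n^{O(\eps_0)} m/\mu) = \exp(n^{1 - C\eps_0 + O(\eps_0)}) = \exp(o(n))$, provided $C$ is chosen larger than the implicit constant in the exponent. A final application of the norm comparison converts the $\ell^\infty$-net into a $\beta n^{1/2}$-net in $\ell^2$, completing the proof.

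The only real ``work'' in carrying this out is the parameter bookkeeping -- tuning $\eps_0$ and $C$ in the right order so that the slack $n^{O(\eps_0)n}$ in Theorem \ref{ilo-sparse} gets absorbed into the corollary's weaker $n^{(-1/2+\eps)n}$ factor and the exponent of the error term is genuinely $o(n)$; no new mathematical input beyond the two inverse Littlewood-Offord theorems and the elementary inequality $\|\cdot\|_2 \le n^{1/2}\|\cdot\|_\infty$ is required.
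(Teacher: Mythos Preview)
Your proposal is correct and follows essentially the same route as the paper: the paper's ``proof'' is literally the single observation that $\|\v\|_\infty \ge n^{-1/2}\|\v\|$, together with the Remark after Theorem~\ref{ilo-sparse} noting that the choice $m = n^{1-C\eps}\mu$ collapses the sparse bound to the form stated in the corollary. Your treatment is in fact more careful than the paper's, since you explicitly separate out the degenerate range of $\mu$ where Theorem~\ref{ilo-sparse} is not directly applicable (the paper tacitly ignores this, as in applications $\mu$ always satisfies $\mu > 1/n$); your handling of that range via a trivial volume bound is the natural thing to do, though making it fully rigorous requires the implicit constraints $\beta, p = n^{-O(1)}$ inherited from the theorems.
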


\begin{remark} In fact, the proof of Theorem \ref{ilo} gives a fairly precise description of the set $S_{n,\a,\beta,p}$, as is the case with other inverse Littlewood-Offord theorems in the literature.  However, this description is somewhat  technical to state and we only need the entropy bound on $S_{n,\a,\beta,p}$ in our application, so we have presented Theorem \ref{ilo} in the above short (but less explicit) form.
\end{remark}

\section{Concentration probabilities  and Fourier analysis}\label{concprob}  

Throughout this section $\a$ will be a fixed complex random variable with $O(1)$-controlled second moment.  For any $0 < \mu \leq 1$, let $\a^{(\mu)}$ be the random variable
\begin{equation}\label{amu}
 \a^{(\mu)} := (\a_1 - \a_2) \I_{\frac{\mu}{2} }
\end{equation}
where $\a_1, \a_2$ are iid copies of $\a$ and $\I_{\frac{\mu}{2}}$ is independent from $\a_{1}, \a_{2}$.

\begin{example} If $\a$ is the Bernoulli random variable $\P(\a = +1) = \P(\a = -1) = 1/2$, then $\a^{(\mu)} \in \{0,+2,-2\}$ with $\P(\a^{(\mu)} = +2) = \P(\a^{(\mu)} = -2) = \mu/8$.
\end{example}

For any $0 < \mu \leq 1$ and any tuple $\v = (v_1,\ldots,v_n)$ of complex numbers, define the \emph{concentration probability}
\begin{equation}\label{pumar}
\P_{\mu}(\v) := \E \exp( - \pi |W_{\a^{(\mu)}}(\v)|^2 ).
\end{equation}
This quantity will turn out to be very convenient for controlling the small ball probabilities of $W_{\a^{(\mu)}}(\v)$ (see Lemma \ref{concball} below).  To do that, we first need a Fourier-analytic representation of $\P_\mu(\v)$.  We introduce the \emph{characteristic function} $f: \C \to \R$, defined by
\begin{equation}\label{fw}
 f(z) := |\E( e( \Re( \a z ) ) )|^2
\end{equation}
where $e$ is the standard character
$$e(t) := e^{2\pi \sqrt{-1} t}.$$

\begin{lemma}[Fourier representation]\label{pf2}  For any tuple $\v = (v_1, \ldots, v_n)$ of complex numbers and any $0 < \mu \leq 1$, we have
\begin{equation}\label{pumar-fourier-2}
 \P_{\mu}(\v) = \int_\C \prod_{i=1}^n \left(1-\frac{\mu}{2} + \frac{\mu}{2} f(\xi v_i)\right) \exp( - \pi |\xi|^2 )\ d\xi.
\end{equation}
Here of course $d\xi$ is Lebesgue measure on the complex plane $\C$.
\end{lemma}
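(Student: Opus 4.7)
The approach is a direct Fourier-analytic computation resting on the Gaussian self-duality identity on $\C\cong\R^2$, namely
$$ e^{-\pi|w|^2} = \int_\C e^{-\pi|\xi|^2}\, e(\Re(\bar\xi w))\,d\xi, $$
which follows from the fact that $e^{-\pi|\cdot|^2}$ is its own $2$-dimensional Fourier transform under the pairing $\langle w,\xi\rangle = \Re(\bar\xi w)$.

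The plan is as follows. First, I substitute $w = W_{\a^{(\mu)}}(\v)$ into the Gaussian identity and take expectations; Fubini applies because the Gaussian weight gives an absolutely integrable dominator. This yields
$$ \P_\mu(\v) = \int_\C e^{-\pi|\xi|^2}\,\E\, e\!\left(\Re\!\left(\bar\xi\sum_{i=1}^n v_i\a_i^{(\mu)}\right)\right)\,d\xi. $$
Second, since the components $\a_i^{(\mu)}$ are jointly independent across $i$ (each is built from its own independent copies of $\a$ and $\I_{\mu/2}$), the expectation factorises:
$$ \E\, e\!\left(\sum_i \Re(\bar\xi v_i\a_i^{(\mu)})\right) = \prod_{i=1}^n \E\, e\!\left(\Re(\bar\xi v_i\a_i^{(\mu)})\right). $$

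Third, I compute each factor by conditioning on the Bernoulli $\I_{\mu/2,i}$: with probability $1-\mu/2$ the indicator vanishes and the factor is $1$, while with probability $\mu/2$ we obtain $\E\, e(\Re(\bar\xi v_i(\a_{1,i}-\a_{2,i})))$. Using independence of $\a_{1,i}$ and $\a_{2,i}$, together with $e(-t)=\overline{e(t)}$ for real $t$,
$$ \E\, e(\Re(\bar\xi v_i(\a_{1}-\a_{2}))) = \E\, e(\Re(\bar\xi v_i \a_{1}))\cdot\overline{\E\, e(\Re(\bar\xi v_i\a_2))} = |\E\, e(\Re(\a\,\bar\xi v_i))|^2 = f(\bar\xi v_i). $$
So each factor equals $(1-\tfrac{\mu}{2}) + \tfrac{\mu}{2}f(\bar\xi v_i)$.

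Finally, to match the statement exactly I apply the substitution $\xi \mapsto \bar\xi$ in the outer integral; this is a measure-preserving involution of $\C$ under Lebesgue measure, fixes $|\xi|^2$, and replaces $f(\bar\xi v_i)$ by $f(\xi v_i)$, giving the claimed identity \eqref{pumar-fourier-2}. There is no real obstacle here — the only care needed is bookkeeping of complex conjugates to ensure $f$ is evaluated at $\xi v_i$ and not $\bar\xi v_i$, which is handled by the final substitution.
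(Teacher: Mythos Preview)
Your proof is correct and follows essentially the same route as the paper: apply the Gaussian Fourier identity to $\exp(-\pi|W_{\a^{(\mu)}}(\v)|^2)$, swap expectation and integral, and factor the characteristic function of the random walk using independence and the definition of $\a^{(\mu)}$. The only cosmetic difference is your choice of pairing $\Re(\bar\xi w)$ in place of the paper's $\Re(\xi z)$, which you correctly undo at the end with the substitution $\xi\mapsto\bar\xi$; had you used $\Re(\xi w)$ from the outset (equally valid, since the Gaussian is radial), the argument in $f$ would have come out as $\xi v_i$ directly with no final change of variables needed.
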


\begin{proof} From the Fourier identity
\begin{equation}\label{fourier-ident}
\exp(-\pi|z|^2) = \int_\C e( \Re( \xi z ) ) \exp(-\pi|\xi|^2)\ d\xi
\end{equation}
and \eqref{pumar} we have
\begin{equation}\label{pmuw}
\P_\mu(\v) = \int_\C \E e( \Re( \xi W_{\a^{(\mu)}}(\v) ) ) \exp(-\pi|\xi|^2)\ d\xi.
\end{equation}
On the other hand, from \eqref{wav}, \eqref{amu}, \eqref{fw} and independence we see that
$$ \E e( \Re( \xi W_{\a^{(\mu)}}(\v) ) ) = \prod_{i=1}^n (1-\frac{\mu}{2} + \frac{\mu}{2} f(\xi v_i))$$
and the claim follows.
\end{proof}

The relevance of concentration probability to the small ball probability is provided by the following lemma:

\begin{lemma}[Concentration probability bounds small ball probability]\label{concball}  For any tuple $\v$ and any $r > 0$, we have
$$ p_{r,\a}(\v) \le e^{\pi r^{2}} \P_1(\v).$$
\end{lemma}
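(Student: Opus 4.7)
The plan is to reduce the small ball probability to a weighted Gaussian expectation, and then use the Fourier representation from Lemma \ref{pf2} to match it against $\P_1(\v)$.

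First, I would dominate the indicator of the ball $B(z,r)$ by a Gaussian: the elementary inequality $\mathbf{1}_{|w - z| \leq r} \leq e^{\pi(r^2 - |w-z|^2)}$ holds for every $w \in \C$ (trivially when $|w-z| > r$; and when $|w-z| \leq r$ the right-hand side is at least $1$). Taking expectations with $w = W_\a(\v)$ gives
$$ \P( W_\a(\v) \in B(z,r) ) \leq e^{\pi r^2}\, \E \exp\!\left(-\pi |W_\a(\v) - z|^2\right) $$
for every $z \in \C$. Thus it suffices to show $\E \exp(-\pi|W_\a(\v) - z|^2) \leq \P_1(\v)$ uniformly in $z$.

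Next I would expand the Gaussian using the Fourier identity \eqref{fourier-ident}: writing $\exp(-\pi|W_\a(\v)-z|^2) = \int_\C e(\Re(\xi(W_\a(\v)-z))) e^{-\pi|\xi|^2}\,d\xi$, swapping expectation and integration (Fubini), and invoking independence of the $\a_i$, I obtain
$$ \E \exp\!\left(-\pi|W_\a(\v) - z|^2\right) = \int_\C e(-\Re(\xi z))\, \prod_{i=1}^n \E\, e(\Re(\xi v_i \a_i))\, e^{-\pi |\xi|^2}\, d\xi. $$
Taking absolute values inside the integral and using the definition \eqref{fw} (so that $|\E e(\Re(\xi v_i \a))| = \sqrt{f(\xi v_i)}$), this is bounded by $\int_\C \prod_i \sqrt{f(\xi v_i)}\, e^{-\pi|\xi|^2}\,d\xi$.

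Finally, the elementary AM-GM bound $\sqrt{t} \leq (1+t)/2$ (valid for all $t \geq 0$, with equality at $t=1$), applied pointwise in $\xi$ to each factor $t = f(\xi v_i) \in [0,1]$, gives
$$ \prod_{i=1}^n \sqrt{f(\xi v_i)} \leq \prod_{i=1}^n \frac{1 + f(\xi v_i)}{2}, $$
and comparing with the $\mu=1$ case of the Fourier representation \eqref{pumar-fourier-2} in Lemma \ref{pf2} identifies the right-hand integral exactly as $\P_1(\v)$. Combining the steps yields $\P(W_\a(\v) \in B(z,r)) \leq e^{\pi r^2} \P_1(\v)$; taking the supremum over $z$ gives the claim. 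There is no real obstacle here beyond spotting the AM-GM step that converts the square root coming from absolute values into the symmetrized factor $(1+f)/2$ which matches $\P_1$ and explains why $\mu=1$ is the natural choice.
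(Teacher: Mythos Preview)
Your proof is correct and follows essentially the same route as the paper: dominate the ball indicator by a Gaussian, apply the Fourier identity \eqref{fourier-ident}, take absolute values to extract $\prod_i f(\xi v_i)^{1/2}$, use $\sqrt{t} \le (1+t)/2$, and identify the result with $\P_1(\v)$ via Lemma \ref{pf2}.
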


In applications, $r$ will be very close to 0 and so the term $e^{\pi r^{2}} $ can be ignored.

\begin{proof} From Definition \ref{sbp}, it suffices to show that
$$ \P( W_\a(\v) \in B(z,r) ) \le e^{\pi r^{2}} \P_1(\v) $$
for any $z \in \C$.  Notice that
$$ \P( W_\a(\v) \in B(z,r) ) \le e^{\pi r^{2}}  \E \exp( - \pi |W_{\a}(\v) - z|^2 ).$$

Applying \eqref{fourier-ident} as in the proof of the preceding lemma, we have
$$ \E \exp( - \pi |W_{\a}(\v) - z|^2 )
= \int_\C \E e( \Re( \xi W_{\a}(\v) ) ) e( -\Re( \xi z ) ) \exp(-\pi|\xi|^2)\ d\xi.$$

The quantity $ |\E e( \Re( \xi W_{\a}(\v) ) )| $
can be expanded, using \eqref{wav} and  \eqref{fw}, as
$\prod_{i=1}^n f(\xi v_i)^{1/2}$. Since $f(\xi v_{i})^{1/2} \le \frac{1}{2} + \frac{1}{2} f(\xi v_{i})$, it follows that

$$ \left|\E e( \Re( \xi W_{\a}(\v) ) )\right| \leq \prod_{i=1}^n \left(\frac{1}{2} + \frac{1}{2} f(\xi v_i)\right).$$

The claim of the lemma follows from  the triangle inequality and Lemma \ref{pf2}.
\end{proof}

We now generalize the above lemma to the sparse case:

\begin{lemma}[Concentration probability bounds small ball probability, sparse version]\label{concball-sparse}  For any tuple $\v$ and any $r > 0, 1 \ge \mu >0$, we have
$$ p_{r,\a \I_{\mu}}(\v) \le e^{\pi r^{2}} \P_{\mu}(\v).$$
\end{lemma}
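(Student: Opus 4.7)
The plan is to follow the template of Lemma \ref{concball} but carry the sparsity factor $\mu$ through the Fourier computation. Fix $z \in \C$; by definition of $p_{r, \a \I_\mu}(\v)$ it suffices to show
$$\P(W_{\a \I_\mu}(\v) \in B(z,r)) \le e^{\pi r^2} \P_\mu(\v).$$
Just as in Lemma \ref{concball}, I would start from the pointwise bound $\mathbf{1}_{B(z,r)}(w) \le e^{\pi r^2} \exp(-\pi |w-z|^2)$, which gives
$$\P(W_{\a \I_\mu}(\v) \in B(z,r)) \le e^{\pi r^2} \E \exp(-\pi |W_{\a \I_\mu}(\v) - z|^2),$$
and then apply \eqref{fourier-ident} together with independence of the $\a_i \I_{\mu,i}$ and the triangle inequality to get
$$\E\exp(-\pi |W_{\a \I_\mu}(\v) - z|^2) \le \int_\C \prod_{i=1}^n \bigl|\E\, e(\Re(\xi v_i \a \I_\mu))\bigr|\, \exp(-\pi |\xi|^2)\, d\xi.$$

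The key step — and the only point where sparsity requires anything new — is to bound the single-factor characteristic function. Conditioning on $\I_\mu$, we have
$$\E\, e(\Re(\xi v_i \a \I_\mu)) = (1-\mu) + \mu\, \E\, e(\Re(\xi v_i \a)) =: 1-\mu + \mu g_i,$$
where $|g_i|^2 = f(\xi v_i)$ by \eqref{fw}. I would then verify the elementary inequality
$$|1 - \mu + \mu g_i| \le 1 - \tfrac{\mu}{2} + \tfrac{\mu}{2} |g_i|^2 = 1 - \tfrac{\mu}{2} + \tfrac{\mu}{2} f(\xi v_i),$$
which follows from the triangle inequality $|1-\mu+\mu g_i| \le 1 - \mu + \mu |g_i|$ followed by the obvious $\tfrac{\mu}{2}(|g_i|-1)^2 \ge 0$. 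Note that the non-sparse case ($\mu = 1$) recovers exactly the bound $f(\xi v_i)^{1/2} \le \tfrac12 + \tfrac12 f(\xi v_i)$ used in Lemma \ref{concball}.

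Assembling, the product of these factors integrated against $\exp(-\pi|\xi|^2)$ is precisely the Fourier representation of $\P_\mu(\v)$ provided by Lemma \ref{pf2}, yielding
$$\P(W_{\a \I_\mu}(\v) \in B(z,r)) \le e^{\pi r^2}\int_\C \prod_{i=1}^n\Bigl(1-\tfrac{\mu}{2} + \tfrac{\mu}{2} f(\xi v_i)\Bigr) \exp(-\pi|\xi|^2)\, d\xi = e^{\pi r^2}\, \P_\mu(\v),$$
completing the proof. There is no serious obstacle: the whole argument is a direct recycling of Lemma \ref{concball}, with the single new ingredient being the pointwise bound on $|1-\mu+\mu g|$, whose convex-combination structure is exactly what matches the factors appearing in \eqref{pumar-fourier-2}.
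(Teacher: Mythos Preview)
Your proof is correct and follows essentially the same route as the paper's: both expand the characteristic function of $W_{\a\I_\mu}(\v)$ as a product of factors $(1-\mu)+\mu g_i$, bound each factor by $1-\tfrac{\mu}{2}+\tfrac{\mu}{2}f(\xi v_i)$, and then invoke Lemma \ref{pf2}. If anything you are slightly more careful than the paper, which writes that $|\E e(\Re(\xi W_{\a\I_\mu}(\v)))|$ ``can be expanded as'' $\prod_i((1-\mu)+\mu f(\xi v_i)^{1/2})$ when in fact this is already an inequality (your explicit triangle-inequality step $|1-\mu+\mu g_i|\le 1-\mu+\mu|g_i|$).
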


\begin{proof}
The proof is almost identical as the previous one. The only difference here is that we have  $ |\E e( \Re( \xi W_{\a \I_{\mu}}(\v) ) )| $ instead of  $ |\E e( \Re( \xi W_{\a}(\v) ) )| $.
Notice that $ |\E e( \Re( \xi W_{\a \I_{\mu}}(\v) ) )| $ can be expanded as  $\prod_{i=1}^n( (1-\mu) + \mu f(\xi v_i)^{1/2})$. Since $f(\xi v_{i})^{1/2} \le \frac{1}{2} + \frac{1}{2} f(\xi v_{i})$, it follows that
$$ |\E e( \Re( \xi W_{\a \I_{\mu}}(\v) ) )| \leq \prod_{i=1}^n \left((1- \frac{\mu}{2}) +\frac{\mu}{2}  f(\xi v_i)\right),$$

\noindent and again we are done using Lemma \ref{pf2}. \end{proof}

The concentration probability has several pleasant properties (cf. \cite[Lemma 5.1]{TVsing}):

\begin{lemma}[Properties of $\P_{\mu}$]\label{pumar-lemma} Let $0 < \mu \leq 1$.  Then the following properties hold.
\begin{itemize}
\item[(i)] The quantity $\P_{\mu}(\w)$ is monotone decreasing in $\mu$ and permutation invariant in $\w$.
\item[(ii)] For any tuples $\v, \w$ we have
$$ \P_{\mu}(\v \w) \leq \P_{\mu}(\v)$$
where $\v \w$ is the concatenation of $\v$ and $\w$.
\item[(iii)] For any tuples $\v, \w$ we have
$$ \P_{\mu}(\v) \P_{\mu}(\w) \leq 2 \P_{\mu}(\v \w).$$
\item[(iv)] For any $k \geq 1$ and tuple $\v$ we have
$$ \P_{\mu}(\v) \leq \P_{\mu/k}(\v^k)$$
where $\v^k$ is the concatenation of $k$ copies of $\v$.
\item[(v)] For any tuples $\v, \w_1, \ldots, \w_m$ we have
$$ \P_{\mu}(\v \w_1 \ldots \w_m ) \leq \left(\prod_{i=1}^m \P_\mu( \v \w_i^m ) \right)^{1/m}.$$
In particular, by the pigeonhole principle, there exists $1 \leq i \leq m$ such that
$$ \P_{\mu}(\v \w_1 \ldots \w_m ) \leq \P_\mu( \v \w_i^m ).$$
\end{itemize}
\end{lemma}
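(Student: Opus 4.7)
The plan is to work from the Fourier representation of Lemma \ref{pf2}.  Writing $\phi_\mu(\v;\xi) := \prod_{i=1}^n\bigl(1-\tfrac{\mu}{2}(1-f(\xi v_i))\bigr)$, so that
\[
 \P_\mu(\v) = \int_\C \phi_\mu(\v;\xi)\, e^{-\pi|\xi|^2}\,d\xi,
\]
each item will be derived by establishing a pointwise inequality on $\phi_\mu$ and then integrating against the Gaussian weight.  Two ambient facts will recur: each factor of $\phi_\mu$ lies in $[0,1]$ (since $f\in[0,1]$), and the variable $\a^{(\mu)} = (\a_1-\a_2)\I_{\mu/2}$ is symmetric, so that $W_{\a^{(\mu)}}(\v)$ is a symmetric complex random variable.

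Items (i), (ii), (iv), (v) should fall out of simple pointwise manipulations.  For (i), the factor $1-\tfrac{\mu}{2}(1-f(\xi v_i))$ is decreasing in $\mu$, and the product is already symmetric in the entries.  For (ii), $\phi_\mu(\v\w;\xi)=\phi_\mu(\v;\xi)\phi_\mu(\w;\xi)$ is dominated by $\phi_\mu(\v;\xi)$ because the second factor is at most $1$.  For (iv), the pointwise reduction is $1-ks\leq(1-s)^k$ with $s=\tfrac{\mu}{2k}(1-f(\xi v_i))\in[0,1]$, which is Bernoulli's inequality.  For (v), I would apply the generalised H\"older inequality pointwise in $\xi$ to the factorisation
\[
 \phi_\mu(\v;\xi)\prod_{i=1}^m \phi_\mu(\w_i;\xi) = \prod_{i=1}^m \bigl(\phi_\mu(\v;\xi)^{1/m}\phi_\mu(\w_i;\xi)\bigr)
\]
with $m$ equal exponents $m$, and then integrate against $e^{-\pi|\xi|^2}\,d\xi$; the $i$-th factor on the right yields exactly $\P_\mu(\v\w_i^m)^{1/m}$.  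The second assertion in (v) is then a pigeonhole on the geometric mean.

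The only part that requires an actual idea is (iii), and this is the step I expect to be the main obstacle because it is the only one that genuinely needs the symmetry of $\a^{(\mu)}$.  Here I would switch back to the probabilistic side: let $W := W_{\a^{(\mu)}}(\v)$ and let $W'$ be an independent copy of $W_{\a^{(\mu)}}(\w)$, so by independence
\[
 \P_\mu(\v)\P_\mu(\w) = \E\exp\bigl(-\pi(|W|^2+|W'|^2)\bigr).
\]
The parallelogram identity $|W|^2+|W'|^2 = \tfrac12|W+W'|^2+\tfrac12|W-W'|^2$ factors the exponential as a product of two terms, and Cauchy--Schwarz yields
\[
 \P_\mu(\v)\P_\mu(\w) \leq \bigl(\E e^{-\pi|W+W'|^2}\bigr)^{1/2}\bigl(\E e^{-\pi|W-W'|^2}\bigr)^{1/2}.
\]
Symmetry of $W'$ makes $(W,W')$ and $(W,-W')$ equidistributed, so the two right-hand expectations coincide and equal $\P_\mu(\v\w)$.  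This in fact produces the cleaner bound $\P_\mu(\v)\P_\mu(\w)\leq\P_\mu(\v\w)$, from which the stated (iii) follows with room to spare (the factor of $2$ is harmless slack).  In short, the hard step is (iii), where one exchanges the Fourier picture for the probabilistic one and exploits symmetry via the parallelogram law; the remaining parts are one-line scalar inequalities integrated against a Gaussian.
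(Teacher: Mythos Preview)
Your argument is correct throughout, and for (i), (ii), (iv), (v) it coincides with the paper's proof: the paper also reads these directly off the Fourier representation \eqref{pumar-fourier-2}, invoking the convexity inequality $1-t\le(1-t/k)^k$ for (iv) and H\"older's inequality for (v).

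The one place you diverge is (iii).  The paper uses only the one-sided estimate $|W|^2+|W'|^2\ge\tfrac12|W+W'|^2$ to bound $\P_\mu(\v)\P_\mu(\w)\le\E\exp(-\tfrac{\pi}{2}|W_{\a^{(\mu)}}(\v\w)|^2)$, then rewrites the right-hand side via the scaled Gaussian identity $\exp(-\pi|z|^2/2)=2\int_\C e(\Re(\xi z))\exp(-2\pi|\xi|^2)\,d\xi$ and compares Gaussian weights $\exp(-2\pi|\xi|^2)\le\exp(-\pi|\xi|^2)$; the factor $2$ in the statement is exactly the normalisation constant from that scaled identity.  Your route---full parallelogram identity, Cauchy--Schwarz, and the symmetry $-W'\overset{d}{=}W'$ coming from $\a^{(\mu)}=(\a_1-\a_2)\I_{\mu/2}$---is a genuinely different argument and yields the sharper inequality $\P_\mu(\v)\P_\mu(\w)\le\P_\mu(\v\w)$ with no loss of constant.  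The paper's approach has the mild advantage that it does not explicitly invoke the symmetry of $\a^{(\mu)}$ (though the nonnegativity of the Fourier integrand it relies on is an equivalent manifestation of that symmetry); your approach is cleaner and gives a better constant, which is harmless here since the factor $2$ is never used quantitatively downstream.
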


\begin{proof} Properties (i), (ii) are immediate from \eqref{pumar-fourier-2}.  To prove (iii), observe from \eqref{pumar} that
$$\P_{\mu}(\v) \P_{\mu}(\w) = \E \exp( - \pi (|W_{\a^{(\mu)}}(\v)|^2 +|W_{\a^{(\mu)}}(\w)|^2) )$$
where we require the walks $W_{\a^{(\mu)}}(\v), W_{\a^{(\mu)}}(\w)$ to be independent.  Using the arithmetic mean-geometric mean inequality
$$ |W_{\a^{(\mu)}}(\v)|^2 +|W_{\a^{(\mu)}}(\w)|^2 \geq \frac{1}{2} |W_{\a^{(\mu)}}(\v \w)|^2$$
followed by the Fourier identity
$$ \exp(-\pi|z|^2/2) =2 \int_\C e( \Re( \xi z ) ) \exp(-2\pi|\xi|^2)\ d\xi$$
we conclude that
$$\P_{\mu}(\v) \P_{\mu}(\w) \leq 2 \int_\C \E e( \Re( \xi W_{\a^{(\mu)}}(\v \w) ) ) \exp(-2\pi|\xi|^2)\ d\xi.$$
Comparing this with \eqref{pmuw} we obtain the claim.

The inequality (iv) follows easily from \eqref{pumar-fourier-2} and the elementary inequality $1-t \leq (1-t/k)^k$ for all $0 \leq t \leq 1$, which follows from the convexity of $\log(1-t)$.  Finally, the inequality (v) follows from \eqref{pumar-fourier-2} and H\"older's inequality.
\end{proof}

\section {The $\a$-norm of a complex number }\label{sec5}

In this section,
 we present a way to estimate
  the characteristic function $f$ (and hence the concentration probabilities $\P_\mu(\w)$) in terms of a more convenient expression.  Define the \emph{$\a$-norm} of a complex number $w \in \C$ by the formula
\begin{equation}\label{awdef}
 \| w \|_\a := \left(\E \| \Re( w (\a_1 - \a_2 ) ) \|_{\R/\Z}^2\right)^{1/2}
\end{equation}
where $\a_1, \a_2$ are iid copies of $\a$, and $\|t\|_{\R/\Z}$ denotes the distance from $t$ to the nearest integer.

\begin{example} If $\a$ is Bernoulli, then $\| w\|_{\a} =\frac{1}{\sqrt 2} \| \Re(2w) \| _{\R/\Z} $. So in this case the $\a$-norm of $w$ is basically the size of the fractional part of
$\Re(2w)$.
\end{example}

\begin{lemma}[Relationship between $f$ and $\a$-norm]  For any $w \in \C$ and $0 < \mu \leq 1$ we have
$$ (1 - \frac{\mu}{2}) + \frac{\mu}{2} f(w) \leq \exp\left( - \Omega( \mu \|w\|_\a^2 ) \right)$$
and thus by Lemma \ref{pf2} we have
\begin{equation}\label{pumar-fourier-3}
 \P_{\mu}(\w) \leq \int_\C \exp\left(-\Omega\left( \mu \sum_{i=1}^k \| \xi w_i \|_\a^2 \right)\right) \exp( - \pi |\xi|^2 )\ d\xi
\end{equation}
for any tuple $\w = (w_1,\ldots,w_k)$.
\end{lemma}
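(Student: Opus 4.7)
The plan is to prove the pointwise inequality
$$(1 - \mu/2) + (\mu/2) f(w) \leq \exp\bigl(-\Omega(\mu \|w\|_\a^2)\bigr)$$
and then feed it into the Fourier representation from Lemma \ref{pf2} to obtain \eqref{pumar-fourier-3}.

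First I would unfold the definition of $f$. Writing $\a_1, \a_2$ for iid copies of $\a$, independence gives
$$f(w) = |\E\, e(\Re(\a w))|^2 = \E\, e\bigl(\Re((\a_1 - \a_2) w)\bigr),$$
and since the right-hand side is real, it equals $\E \cos\bigl(2\pi \Re((\a_1 - \a_2) w)\bigr)$. The elementary trigonometric inequality $1 - \cos(2\pi t) = 2\sin^2(\pi t) \geq 8\,\|t\|_{\R/\Z}^2$ (valid because $|\sin(\pi t)| \geq 2\|t\|_{\R/\Z}$) then yields
$$f(w) \leq 1 - 8\,\E\,\bigl\|\Re((\a_1-\a_2) w)\bigr\|_{\R/\Z}^2 = 1 - 8\,\|w\|_\a^2$$
by the definition \eqref{awdef} of the $\a$-norm.

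Next I plug this into the convex combination:
$$(1-\mu/2) + (\mu/2) f(w) \leq (1-\mu/2) + (\mu/2)\bigl(1 - 8\|w\|_\a^2\bigr) = 1 - 4\mu\,\|w\|_\a^2.$$
The bound $1 - t \leq e^{-t}$ (applicable since $\|w\|_\a^2 \leq 1/4$ makes the argument lie in $[0,1]$) converts this to $\exp(-4\mu\|w\|_\a^2)$, which is the desired $\exp(-\Omega(\mu\|w\|_\a^2))$.

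Finally, I substitute this estimate, applied to each factor with $w = \xi w_i$, into the Fourier representation \eqref{pumar-fourier-2}: the product $\prod_i \bigl((1-\mu/2) + (\mu/2) f(\xi w_i)\bigr)$ is bounded by $\exp\bigl(-\Omega(\mu \sum_i \|\xi w_i\|_\a^2)\bigr)$, and the Gaussian weight $\exp(-\pi|\xi|^2)$ is preserved, giving \eqref{pumar-fourier-3}. There is no real obstacle here; the only step requiring any care is the trigonometric inequality, and one should verify that the implied constant in $\Omega(\cdot)$ is absolute (independent of $\mu$, $w$, and the distribution of $\a$), which the computation above makes transparent.
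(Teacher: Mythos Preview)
Your proof is correct and follows essentially the same approach as the paper's: both expand $f(w)$ as $\E\cos(2\pi\Re(w(\a_1-\a_2)))$, invoke the elementary inequality $\cos(2\pi\theta)\le 1-\Omega(\|\theta\|_{\R/\Z}^2)$, and then use $1-t\le e^{-t}$. The only cosmetic difference is the order in which you apply $1-t\le e^{-t}$ versus taking the convex combination, and you make the implied constant explicit.
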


\begin{proof} In view of the elementary inequality $1 - t \leq \exp( - t )$ for $t \geq 0$, it will suffice to show that
$$ f(w) \leq 1 - \Omega( \|w\|_\a^2 ).$$
But from \eqref{fw} we have the identity
$$ f(w) = \Re \E e( \Re( w (\a_1 - \a_2) ) ) = \E \cos(2\pi \Re(w(\a_1-\a_2)))$$
and the claim follows from the elementary inequality $\cos(2\pi \theta) \leq 1- \Omega( \|\theta\|_{\R/\Z}^2 )$.
\end{proof}

We now record some useful properties of the $\a$-norm, which may help explain why we call it a ``norm'':

\begin{lemma}[Properties of $\a$-norm]\label{panorm}
The following properties hold:

\begin{itemize}
\item[(i)] For any $w \in \C$,  $0 \leq \| w \|_\a \leq 1$ and $\| -w \|_\a = \|w \|_\a$.
\item[(ii)] For any $z, w \in \C$, $\| z+w \|_\a \leq \|z\|_\a + \|w\|_\a$.
\item[(iii)] If $\a$ has $\kappa$-controlled second moment for some positive constant $\kappa$, then
there exists a positive constant $c$ depending on $\kappa$ such that
$\| z \|_\a \gg |\Re(z)|$ for all $z \in B(0,c)$.
\end{itemize}
\end{lemma}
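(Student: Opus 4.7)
For (i), the bound $\|w\|_\a \le 1$ is immediate: the distance to the nearest integer is always at most $1/2$, so $\|\Re(w(\a_1-\a_2))\|_{\R/\Z}^2 \le 1/4$ pointwise, and the square root of the expectation is bounded by $1/2 \le 1$. The symmetry $\|-w\|_\a=\|w\|_\a$ follows because $\|\cdot\|_{\R/\Z}$ is itself sign-symmetric.

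For (ii), I use that $\|\cdot\|_{\R/\Z}$ is a genuine metric on $\R/\Z$. Pointwise in $(\a_1,\a_2)$,
$$\|\Re((z+w)(\a_1-\a_2))\|_{\R/\Z} \le \|\Re(z(\a_1-\a_2))\|_{\R/\Z} + \|\Re(w(\a_1-\a_2))\|_{\R/\Z},$$
and then Minkowski's inequality in $L^2$ of the probability space of $(\a_1,\a_2)$ gives $\|z+w\|_\a \le \|z\|_\a+\|w\|_\a$.

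For (iii), which is the substantive claim, the plan is to pick $c$ so small that the $\R/\Z$-norm collapses to the ordinary absolute value on the dominant event, and then invoke \eqref{eyot}. Specifically, choose $c := 1/(8\kappa)$, so that whenever $|z| \le c$ and $|\a_1|,|\a_2| \le \kappa$ we have $|\Re(z(\a_1-\a_2))| \le 2c\kappa \le 1/4 < 1/2$, in which case $\|\Re(z(\a_1-\a_2))\|_{\R/\Z} = |\Re(z(\a_1-\a_2))|$. Therefore
$$\|z\|_\a^2 \ge \E\, \Re(z(\a_1-\a_2))^2 \I(|\a_1|\le \kappa)\I(|\a_2|\le\kappa).$$
Condition on $\a_2$ with $|\a_2|\le \kappa$ and apply \eqref{eyot} to the inner expectation over $\a_1$ with the choice $w := z\a_2$:
$$\E_{\a_1} \Re(z\a_1 - z\a_2)^2 \I(|\a_1|\le \kappa) \ge \frac{1}{\kappa}\Re(z)^2.$$
Integrating over $\a_2$ restricted to $|\a_2|\le \kappa$ yields $\|z\|_\a^2 \ge \kappa^{-1}\Re(z)^2 \P(|\a|\le \kappa)$.

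The last thing to verify is that $\P(|\a|\le \kappa)$ is bounded away from zero by a constant depending only on $\kappa$. Applying \eqref{eyot} once more with $z=1$, $w=0$ gives $\E \Re(\a)^2 \I(|\a|\le \kappa) \ge 1/\kappa$, while trivially $\E \Re(\a)^2 \I(|\a|\le \kappa) \le \kappa^2 \P(|\a|\le \kappa)$. Hence $\P(|\a|\le \kappa) \ge \kappa^{-3}$, and combining gives $\|z\|_\a \gg_\kappa |\Re(z)|$ on $B(0,c)$. The only mildly delicate step is confirming this lower bound on $\P(|\a|\le\kappa)$; everything else is a direct unpacking of the definitions.
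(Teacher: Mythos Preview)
Your proof is correct and follows essentially the same route as the paper: for (i) and (ii) you give exactly the triangle-inequality/Minkowski argument the paper sketches, and for (iii) you restrict to the event $\{|\a_1|,|\a_2|\le\kappa\}$ so that the $\R/\Z$-norm collapses to absolute value, then condition on $\a_2$ and invoke \eqref{eyot}. Your treatment is in fact a bit more explicit than the paper's---you pin down $c=1/(8\kappa)$ and derive $\P(|\a|\le\kappa)\ge\kappa^{-3}$ directly from \eqref{eyot}, whereas the paper only asserts $\P(|\a|\le K)\gg 1$ for some $K=O(1)$---but the underlying idea is identical.
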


\begin{proof} Property (i) is obvious.  Property (ii) follows from the triangle inequality for $L^2$
and the elementary observation that $\|x+y\|_{\R/\Z} \leq \|x\|_{\R/\Z} + \|y\|_{\R/\Z}$.

Now we prove (iii).  Let $z \in B(0,c)$ for some small $c$.  From \eqref{awdef} it suffices to show that
$$ \E \| \Re( z (\a_1 - \a_2 ) ) \|_{\R/\Z}^2 \gg |\Re(z)|^2.$$
On the other hand, from \eqref{eyot} we have
$$ \E |\Re( \a )|^2 \I( |\a| \leq K ) \geq \frac{1}{K}$$
for some $K = O(1)$.  In particular $\P( |\a| \leq K ) \gg 1$.  So if we let $\b_i$ for $i=1,2$ be $\a_i$ conditioned on the event $|\a_i| \leq K$, it suffices to show that
$$ \E \| \Re( z (\b_1 - \b_2 ) ) \|_{\R/\Z}^2 \gg |\Re(z)|^2.$$
If $c$ is small enough depending on $K$, then $|z(\b_1-\b_2)| \leq \frac{1}{2}$, so it suffices to show that
$$ \E  |\Re( z (\b_1 - \b_2 ) )| \gg |\Re(z)|^2.$$
But this follows by conditioning on $\b_2$ and then using \eqref{eyot}.
\end{proof}

\section{Generalized arithmetic progressions and the forward Littlewood-Offord theorem}\label{sec6}

As in previous literature, our Littlewood-Offord theorems shall involve \emph{generalized arithmetic progressions} (GAPs), which we now define.

\begin{definition}[Generalized arithmetic progression]  If $v_1,\ldots,v_r$ are complex numbers and
$L_1,\ldots,L_r$ are positive numbers, we define the
\emph{symmetric generalized arithmetic progression} (or \emph{symmetric GAP} for short)
$$Q = \GAP((v_1,\ldots,v_r),(L_1,\ldots,L_r))$$ to be the set
$$ Q := \{ n_1 v_1 + \ldots + n_r v_r| n_1,\ldots,n_r \in \Z;
|n_i| \leq L_i \hbox{ for all } i \} \subset \C.$$
We refer to $r$ as the \emph{rank} of $Q$, $v_1,\ldots,v_r$ as the \emph{generators}, and $L_1,\ldots, L_r$ as the \emph{dimensions}.

If all the sums $n_1 v_1 + \ldots + n_r v_r$ are distinct, we say that $Q$ is \emph{proper}.  For $t > 0$, we define the dilate $tQ$ of $Q$ as
$$ tQ :=\GAP((v_1,\ldots,v_r),(tL_1,\ldots,tL_r)).$$
Finally, if $L_1=\ldots=L_r=L$, we abbreviate $\GAP((v_1,\ldots,v_r),(L,\ldots,L))$ as $\GAP((v_1,\ldots,v_r),L)$.
\end{definition}

GAPs are a fundamental object in additive combinatorics and they have played a crucial role in our earlier papers on
Inverse Littlewood-Offord theorems and least singular values \cite{TVsing, TVstoc}.
For a  detailed discussion about these objects, we refer to \cite{TVbook}.

\begin{remark}
It is  helpful to view $Q$ as the image of the integral box
$$\{(n_{1}, \cdots, n_{r}) |  |n_{i}| \le L_{i} , 1\le i \le r \} \subset \Z^{r}$$ under the linear map
$\Phi$ that sends the point $(n_{1}, \dots, n_{r})$ to $n_{1}v_{1} + \dots  + n_{r}v_{r} $.  $Q$ is proper if $\Phi$ is one-to-one.
\end{remark}

We use the following two simple lemmas frequently:

\begin{lemma}[Doubling property] Let $Q$ be a symmetric GAP of rank $r$ and $t \ge 1$. Then

$$\#(tQ) \ll_r t^{r} \# Q. $$ \end{lemma}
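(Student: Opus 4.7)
The plan is to realize $Q$ and $tQ$ as images of integer boxes under the linear map $\Phi \colon \Z^r \to \C$ defined by $\Phi(n_1,\ldots,n_r) := n_1 v_1 + \cdots + n_r v_r$. Setting $B := \{n \in \Z^r : |n_i| \le L_i \text{ for all } i\}$ and $B_t := \{n \in \Z^r : |n_i| \le t L_i \text{ for all } i\}$, we have $Q = \Phi(B)$ and $tQ = \Phi(B_t)$. The strategy is to cover the larger integer box $B_t$ by relatively few integer translates of $B$, and then push that covering forward through $\Phi$; crucially, this whole argument takes place in the index lattice $\Z^r$, so there is no need to assume that $Q$ is proper.

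For the covering step, I would argue coordinate by coordinate: the integer interval $[-\lfloor tL_i\rfloor, \lfloor tL_i \rfloor]$ is easily covered by $O(t)$ translates of $[-\lfloor L_i\rfloor, \lfloor L_i\rfloor]$. When $L_i \ge 1/2$ one uses translations by multiples of $2\lfloor L_i\rfloor + 1$, which tile $\Z$, and the number required is $O(tL_i / L_i) = O(t)$; when $L_i < 1/2$ the target interval reduces to $\{0\}$ and one simply enumerates the $\le 2tL_i + 1 \ll t$ integers to be covered individually. Taking the Cartesian product over the $r$ coordinates produces a finite set $J \subset \Z^r$ with $|J| \ll_r t^r$ such that $B_t \subseteq \bigcup_{j \in J} (j + B)$.

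Applying the linear map $\Phi$ then yields
$$ tQ \;=\; \Phi(B_t) \;\subseteq\; \bigcup_{j \in J} \bigl(\Phi(j) + \Phi(B)\bigr) \;=\; \bigcup_{j \in J} \bigl(\Phi(j) + Q\bigr),$$
so that $\#(tQ) \le |J| \cdot \#Q \ll_r t^r \#Q$, as claimed. There is no real obstacle here: the argument is purely a covering/counting one. The only minor care needed is the bookkeeping when some $L_i < 1$, handled by the case split above, and the key conceptual point is that properness of $Q$ is irrelevant because the covering is carried out upstairs in $\Z^r$ (where the geometry is rectangular and transparent) and any collapsing under the possibly non-injective map $\Phi$ can only decrease cardinalities.
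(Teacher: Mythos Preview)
Your proof is correct and is essentially a detailed expansion of the paper's one-line argument, which reads in its entirety: ``One can cover $tQ$ by $O(t^r)$ translates of $Q$.'' Your approach of carrying out the covering upstairs in the integer box $B_t \subset \Z^r$ and then pushing forward through $\Phi$ is exactly the natural way to justify that sentence, and your observation that properness is irrelevant is the right one. (One tiny wording slip: in the case $L_i < 1/2$ you presumably mean the \emph{source} interval $[-\lfloor L_i\rfloor,\lfloor L_i\rfloor]$ reduces to $\{0\}$, not the target; the bound $2tL_i+1 \ll t$ that follows is correct regardless.)
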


\begin{proof} One can cover $tQ$ by $O(t^r)$ translates of $Q$.
\end{proof}

\begin{lemma}[Pigeonhole principle]\label{pig}  Let $Q \subset \C$ be a finite set, and let $\Omega \subset \C$ be a set which can be covered by at most $M$ balls of radius $r/2$.  Then we have
$$ \#\left( (Q-Q) \cap B(0,r) \right ) \geq \frac{\#(Q \cap \Omega)}{M}.$$
\end{lemma}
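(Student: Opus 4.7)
The plan is a direct two-step pigeonhole argument. First I would invoke the covering hypothesis to partition (or rather, cover) the intersection $Q \cap \Omega$ by the $M$ balls of radius $r/2$ that are supplied with $\Omega$. By the standard pigeonhole principle, at least one of these balls, call it $B$, must satisfy
$$ \#(Q \cap B) \geq \frac{\#(Q \cap \Omega)}{M}.$$

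Next I would exploit the fact that $B$ has diameter at most $r$: any two points $q, q' \in Q \cap B$ satisfy $q - q' \in B(0, r)$. To turn this into a lower bound on $\#((Q-Q) \cap B(0,r))$ I would fix an arbitrary reference point $q_0 \in Q \cap B$ (which exists provided the right-hand side is positive, otherwise the conclusion is trivial) and consider the translation map $q \mapsto q - q_0$. This map is injective on $Q \cap B$ and sends it into $(Q-Q) \cap B(0, r)$, yielding the desired inequality.

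There is no genuine obstacle here: the only minor care required is the edge case $\#(Q \cap \Omega) = 0$, which is handled trivially, and ensuring the injectivity of the translation map, which is immediate. The entire argument is a couple of lines, and both ingredients (pigeonhole into one of the $M$ balls and triangle inequality for the diameter) are standard.
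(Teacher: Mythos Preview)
Your proposal is correct and matches the paper's proof essentially line for line: the paper also dismisses the empty case, pigeonholes $Q \cap \Omega$ into one of the $M$ balls $B(z,r/2)$, picks a reference point $z_0 \in Q$ in that ball, and observes that $(Q \cap \Omega \cap B(z,r/2)) - z_0 \subset (Q-Q) \cap B(0,r)$.
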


\begin{proof} We can of course assume that $Q \cap \Omega$ is non-empty.  By the pigeonhole principle, we can find a ball $B(z,r/2)$ of radius $r/2$ which contains at least $\# (Q \cap \Omega)/M$ elements of $Q \cap \Omega$; in particular it contains at least one element $z_0$ of $Q$.  Since $(Q \cap \Omega \cap B(z,r/2))-z_0$ is contained in $(Q-Q) \cap B(0,r)$, the claim follows.
\end{proof}

For a  GAP $Q=\GAP((v_{1}, \dots, v_{r}), ( L_{1}, \dots, L_{r}))$,
define the \emph{dispersion} $ \D(Q)$ to be the quantity
\begin{equation}\label{dw}
 \D(Q) := \frac{\# Q}{\# (Q\cap B(0,1)) }.
\end{equation}

\begin{remark} The quantity $\D(Q)$ is very close to the metric entropy $\N_1(Q)$ of $Q$, indeed
 simple volume packing arguments (cf. Lemma \ref{pig}) show that $\D(Q) =\Theta_r( \N_1(Q))$.  We will however not use that fact here.
\end{remark}

This quantity turns out to control the concentration probability of
certain random walks associated with $Q$:

\begin{theorem}[Forward Littlewood-Offord theorem]\label{flot}
For any $0 < \mu \leq 1$, $\eps > 0$, and complex numbers $v_{1}, \dots, v_{r}$,  we have
$$ \P_\mu( v_1^{L_1^2} \ldots v_r^{L_r^2} ) \ll_{\eps,r}  \D(Q)^{-1+\eps},$$
\noindent where
$$Q = \GAP((v_1, \ldots, v_r), ( \sqrt {\mu}L_1, \ldots,  \sqrt{\mu} L_r) ). $$
\end{theorem}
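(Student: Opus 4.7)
The plan is to use the Fourier bound \eqref{pumar-fourier-3} to translate the claim into an integral estimate, then connect that integral to the combinatorial quantity $\D(Q)$ via a Cauchy--Schwarz / pigeonhole argument. Applied to $\w = v_1^{L_1^2}\cdots v_r^{L_r^2}$, \eqref{pumar-fourier-3} gives
\begin{equation*}
\P_\mu(\w) \ll \int_{\C}\exp\!\bigl(-c\mu\,g(\xi)\bigr)\exp(-\pi|\xi|^2)\,d\xi, \qquad g(\xi) := \sum_{j=1}^{r} L_j^2\,\|\xi v_j\|_\a^2,
\end{equation*}
so the task becomes showing that the right-hand side is $\ll_{\eps,r} \D(Q)^{-1+\eps}$. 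To link $g(\xi)$ with $Q$: for any $q = \sum_j n_j v_j \in Q$ with $|n_j| \leq \sqrt{\mu}L_j$, Lemma \ref{panorm}(ii) followed by Cauchy--Schwarz gives
$$\|\xi q\|_\a \leq \sum_j |n_j|\,\|\xi v_j\|_\a \leq \Bigl(\sum_j \tfrac{n_j^2}{L_j^2}\Bigr)^{1/2} g(\xi)^{1/2} \leq \sqrt{r\mu\,g(\xi)}.$$
On the level set $E_t := \{\xi : \mu g(\xi) \leq t\}$ every $q\in Q$ therefore satisfies $\|\xi q\|_\a \leq \sqrt{rt}$, and for $\xi \in B(0,c_0)$ with $c_0 = c_0(\kappa)$ sufficiently small, Lemma \ref{panorm}(iii) upgrades this to $|\Re(\xi q)| \ll \sqrt{rt}$ for every $q \in Q_0 := Q\cap B(0,1)$.

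The main step is then a measure bound of the shape $\mes(E_t \cap B(0,c_0)) \ll_{\eps,r} t^{O_r(1)} \D(Q)^{-1+O(\eps)}$. I would prove this by a pigeonhole/dispersion argument: if $\xi,\xi' \in E_t \cap B(0,c_0)$ lie in the same cell of a sufficiently fine partition, then $\eta := \xi - \xi'$ satisfies $|\Re(\eta q)| \ll \sqrt{rt}$ for every $q\in Q_0$, forcing $\eta$ into a thin common sliver whose area is controlled by the geometric spread of $Q_0$ in $\C$; invoking Lemma \ref{pig} together with the Doubling Lemma (to pass from differences inside $Q_0$ to the full difference set $Q-Q$, of size $O_r(|Q|)$) extracts the cardinality ratio $|Q_0|/|Q| = \D(Q)^{-1}$. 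Combining this measure estimate with a dyadic decomposition in $t$, and controlling the tail $|\xi|>c_0$ by a covering argument showing that $g(\xi)$ is generically large there (so that $e^{-c\mu g(\xi)}$ defeats the non-decaying part of the Gaussian weight), yields $\D(Q)^{-1+\eps}$.

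The main obstacle is this measure bound: passing from a uniform $\a$-norm smallness on the finite set $Q$ to the \emph{cardinality} ratio $\D(Q)^{-1}$, rather than a coarser volumetric quantity, is the delicate heart of the argument. The $\eps$-loss in the exponent is the price paid for the rank-$r$ combinatorial constants inherent in the pigeonhole; the assumption that $\a$ has $\kappa$-controlled second moment enters precisely through Lemma \ref{panorm}(iii), which converts $\|\xi q\|_\a$-smallness into $|\Re(\xi q)|$-smallness and so bridges the Fourier side (where $\P_\mu$ is a Gaussian integral over $\xi$) to the combinatorial side (where $\D(Q)$ lives).
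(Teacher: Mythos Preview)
Your reduction via \eqref{pumar-fourier-3} and the Cauchy--Schwarz step giving $\|\xi q\|_\a \le \sqrt{r\mu\,g(\xi)}$ for all $q\in Q$ are both correct and match the paper's opening moves. The gap is in the heart of the argument---your proposed measure bound for $E_t$ based on constraints coming from $Q_0 = Q\cap B(0,1)$.

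The constraints $|\Re(\eta q)| \ll \sqrt{rt}$ for $q\in Q_0$ only see the \emph{convex hull} of $Q_0$, not its cardinality; they restrict $\eta$ to a polar body whose area depends on the geometric spread of $Q_0$ in $\C$, which is at most $2$. The ratio $\#Q_0/\#Q = \D(Q)^{-1}$ simply does not appear. A clean test case: take $r=1$, $v_1$ a real number larger than $1$, and $\sqrt{\mu}L_1$ large. Then $Q_0 = \{0\}$, the constraint is vacuous, and your argument gives no bound on $\mes(E_t\cap B(0,c_0))$ at all, even though $\D(Q)$ can be arbitrarily large. Neither Lemma~\ref{pig} nor the Doubling Lemma helps here, because there is no mechanism in your sketch by which $\#Q$ enters. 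A related symptom: if all the $v_j$ are real, the constraints $|\Re(\eta q)|\ll\sqrt{rt}$ restrict only $\Re(\eta)$ and say nothing about $\Im(\eta)$.

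The paper takes the opposite route: rather than working with the \emph{small} elements $Q_0$, it extracts from $Q$ a carefully chosen lacunary sequence of \emph{large} elements $w_1,\ldots,w_d$ (together with secondary vectors $w'_i$ to handle the imaginary direction), via the Lacunarity Lemma (Lemma~\ref{lac}). The crucial content of that lemma is the inequality \eqref{qork}, which ties $\D(Q)$ to the product $\prod K K_i$. One then builds a ``dual'' GAP $P$ with generators $\zeta_i = 1/(K^2 w_i)$, $\zeta'_i = \sqrt{-1}/(K K_i w_i)$ and shows that (i) $\#P \ge \D(Q)^{1-O(\eps)}$ by \eqref{qork}, and (ii) the translates $z+\Sigma$ for $z\in P$ are pairwise disjoint and contained in a bounded ball---this last step is where the $\a$-norm constraints are unwound back to $|\Re(\cdot)|$-constraints, and where the secondary vectors are essential. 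Disjointness of $\#P$ translates then forces $\mes(\Sigma) \ll \D(Q)^{-1+O(\eps)}$. The cardinality $\D(Q)$ thus enters through the size of the dual object $P$, not through any property of $Q_0$ directly.
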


This ``forward Littlewood-Offord theorem'' will be crucial in establishing Theorem \ref{ilo}.  To give the reader some feeling about this estimate,  let us first consider a toy case when
$\a$ is Bernoulli and $\mu=1$. The adjusted random variable $\a^{(\mu)}$ equals $0$ with probability $3/4$ and $\pm 2$ with probability $1/8$.

Assume furthermore that the $v_{i}$ are non-zero integers and $Q$ is proper.
 Thus $Q \cap B(0,1) \subset \{-1,0,1\}$ and the desired bound becomes
$$ \P_1( v_1^{L_1^2} \ldots v_r^{L_r^2} ) \ll_{\eps, r}  (\# Q)^{-1+\eps}.$$

Consider a (lazy) random walk $W$ starting at $0$. At step $j$, stay with probability $1/2$ and move to right or left by an amount
$v_{j}$ with probability $1/8$. The terminal point after $n$ step is exactly the random variable
$$ W_{\a^{(1)}} (v_{1}\dots v_{n} )= \a^{(1)}_{1} v_{1} + \dots + \a^{(1)}_{n} v_{n} . $$

Since $\P_{\mu} (\v) := \E \exp (-\pi | W_{\a^{(\mu)}} (\v)|^{2}) $, the quantity
 $ \P_1( v_1^{L_1^2} \ldots v_r^{L_r^2} )$ can be bounded from above by the sum of the probability that the lazy random walk with
 $L_{1}^{2}$ steps of size $v_{1}$,  \dots, $L_{r}^{2}$ steps of size $v_{r}$ ends up on a point with absolute value at most
 $10 \log (\#Q)$ and a negligible term which is much smaller than $( \#Q)^{-1}$.

 Notice that the coefficient of $v_{j}$ is the sum of $L_{j}^{2}$ iid copies of $\a^{(1)}$.  It is well known that the distribution of this sum is roughly
 uniform on the interval $[-L_{j}, L_{j}]$. (By roughly uniform,
 we mean that for any two integers in this interval, the ratio of their masses is bounded from above by a positive constant.)
 Thus, the main observation here
 (and somehow the essence of the theorem)  is that the  end point of the walk (conditioned on the fact that the coefficient of $v_{j}$ belongs to $[-L_{j}, L_{j}]$ ) is roughly uniform
 in $Q$.
  It follows that  the probability that it has absolute value $O(\log \# Q)$  can be bounded from above by
 $O(\frac{\log \#Q}{\# Q}) \le \#Q^{-1+\eps} $, giving the desired bound.

 This argument can be made rigorous for random variables $\a$ with discrete supports, even when $Q$ is not proper. However, the proof for the general case is more complicated.  The main technical tool needed is the following level set estimate:

\begin{lemma}[Level set estimate]\label{lemma:levelset}
Given a GAP $Q=\GAP(v_{1}, \dots, v_{r}, L_{1}, \dots, L_{r})$, a complex number $\xi_{0}$, and $\eps > 0$, let  $\Sigma \subset \C$ be the set
\begin{equation}\label{omdef}
\Sigma := \left\{ \xi \in B(\xi_0,1) \bigl| \| \xi v_i \|_\a \le \D(Q)^\eps / L_i \hbox{ for all } 1 \leq i \leq r \right\}.
\end{equation}
Then
\begin{equation}\label{omeq}
 \mes(\Sigma) \ll_{\eps,r} \D(Q)^{-1+O_r(\eps)}.
\end{equation}
\end{lemma}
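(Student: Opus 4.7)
The plan is to propagate the level set constraint from the generators $v_i$ to all of $Q$ via the triangle inequality for the $\a$-norm, and then bound the measure of $\Sigma$ through a Diophantine counting argument that exploits the GAP structure of $Q$.

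First, I would observe that if $\xi \in \Sigma$ and $q = n_1 v_1 + \cdots + n_r v_r \in Q$ with $|n_i| \leq L_i$, then Lemma~\ref{panorm}(ii) gives
$$\|\xi q\|_\a \leq \sum_{i=1}^r |n_i|\,\|\xi v_i\|_\a \leq \sum_{i=1}^r L_i \cdot \frac{\D(Q)^\eps}{L_i} = r\,\D(Q)^\eps,$$
so the constraint propagates from the generators to every element of $Q$. Consequently, for any $\xi, \xi' \in \Sigma$, the difference $\eta := \xi - \xi' \in B(0,2)$ satisfies $\|\eta q\|_\a \leq 2r\D(Q)^\eps$ for every $q \in Q$.

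Next, I would choose $c_0 > 0$ small enough (depending only on $\kappa$) that $\eta q \in B(0, c)$ whenever $\eta \in B(0,2)$ and $q \in B(0, c_0)$, where $c$ is the constant from Lemma~\ref{panorm}(iii). That lemma then converts the $\a$-norm constraint to the linear bound $|\Re(\eta q)| \ll \|\eta q\|_\a \ll \D(Q)^\eps$ for every $q \in (Q-Q) \cap B(0, c_0)$ and every $\eta \in \Sigma - \Sigma$. Applying Lemma~\ref{pig} with $\Omega = B(0,1)$ shows that this set of ``good'' small $q$'s has size $\gg \#(Q \cap B(0,1)) = \#Q/\D(Q)$.

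Finally, I would bound $\mes(\Sigma) \leq \mes(\Sigma - \Sigma)$ by the measure of the set $\{\eta \in B(0,2) : |\Re(\eta q)| \ll \D(Q)^\eps \hbox{ for all } q \in (Q-Q) \cap B(0, c_0)\}$. Bounding this measure reduces to a Diophantine problem: the constraints from the many small $q$'s should pin $\eta$ down to an approximate dual lattice of $Q$, whose covolume inside $B(0,2)$ ought to give the claimed bound $\mes(\Sigma) \ll_{\eps, r} \D(Q)^{-1 + O_r(\eps)}$.

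The main obstacle I expect is this last step. A naive pigeonhole via two linearly independent normals yields at best an area of order $\D(Q)^{2\eps}/|q_1 \wedge q_2|$, which is inadequate when $(Q - Q) \cap B(0, c_0)$ is essentially contained in a real line through the origin (so that $|q_1 \wedge q_2|$ is tiny and the constraints only pin down the real part of $\eta$). To recover the sharp exponent, one must exploit the full arithmetic structure of $Q$ --- probably either via an induction on the rank $r$ (peeling off one generator at a time using Lemma~\ref{pig} inside a lower-dimensional subspace), or via a direct Fourier/Bohr-set argument that compares $\mes(\Sigma)$ to the covolume of a dual lattice of order $\D(Q)^{1 - O(\eps)}$.
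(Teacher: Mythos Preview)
Your propagation step loses too much: for $q\in Q$ you obtain only $\|\xi q\|_\a \le r\D(Q)^\eps$, and since $\|\cdot\|_\a\le 1$ always, this constraint is vacuous once $\D(Q)$ is large (which is the only regime that matters). The fix is to propagate instead to a dilate $K^{-4}Q$ with $K=\D(Q)^\eps$: for $w\in K^{-4}Q$ one gets $\|\xi w\|_\a\ll K^{-3}$, which is genuinely small. But even after this repair, your plan to pass to $q\in(Q-Q)\cap B(0,c_0)$ and apply Lemma~\ref{panorm}(iii) only yields $|\Re(\eta q)|\ll K^{-3}$ for $|q|=O(1)$, constraining $\eta$ to a strip of width $\gg K^{-3}$ in a single real direction. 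That is nowhere near the area bound $\D^{-1+O(\eps)}$, and is exactly the ``line'' obstruction you anticipated but did not resolve.

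The paper's proof supplies the missing mechanism via the Lacunarity lemma (Lemma~\ref{lac}). Instead of using all of $Q$, one extracts from $K^{-4}Q$ a lacunary sequence $w_1,\dots,w_d$ with $|w_i|\ge K|w_{i+1}|$ and $|w_d|>1$, together with \emph{secondary} vectors $w'_1,\dots,w'_d$ chosen to maximise $K_i:=1+K|\Im(w'_i/w_i)|$ at each scale. One then builds an explicit dual GAP $P$ generated by $\zeta_i:=1/(K^2w_i)$ and $\zeta'_i:=\sqrt{-1}/(KK_iw_i)$, shows it is proper of size $\ge\D^{1-O(\eps)}$ and contained in $B(0,O(1/K))$, and proves that the translates $z+\Sigma$ for distinct $z\in P$ are pairwise disjoint; a packing argument inside $B(0,O(1))$ then gives $\mes(\Sigma)\ll\D^{-1+O(\eps)}$. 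The secondary vectors $w'_i$ and their dual generators $\zeta'_i$ are precisely what provide control in the imaginary direction when $Q$ is nearly collinear --- this is the concrete device that overcomes the obstacle you flagged. Your vague appeal to ``induction on rank'' or a ``Bohr-set argument'' does not substitute for this construction; the multiscale lacunary basis with its two-dimensional refinement is the heart of the proof.
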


We will prove Lemma \ref{lemma:levelset} in Sections \ref{lacunary-set}-\ref{sec7} below.
 For now, let us show how it implies Theorem \ref{flot}.

\begin{proof}[Proof of Theorem \ref{flot} assuming Lemma \ref{lemma:levelset}]
We abbreviate $\D := \D(Q)$.  In view of \eqref{pumar-fourier-3}, it suffices to show that
$$ \int_\C \exp(-\Omega( \mu \sum_{i=1}^r L_i^2 \| \xi v_i \|_\a^2 )) \exp( - \pi |\xi|^2 )\ d\xi \ll_{r, \eps} \D^{-1+\eps}.$$

Covering $\C$ by balls of radius $1$, it thus suffices to show that
$$ \int_{B(\xi_0,1)} \exp(-\Omega( \sum_{i=1}^r \mu L_i^2 \| \xi v_i \|_\a^2 ))\ d\xi \ll_{r,\eps} \D^{-1+\eps}$$
for all $\xi_0 \in \C$.

Now we fix $\xi_{0}$. Let $c$ be a small positive constant to be determined. It is clear that if $\D$ is sufficiently large, then
$$ \int_{B(\xi_0,1)} \exp\left(-\Omega( \sum_{i=1}^r \mu L_i^2 \| \xi v_i \|_\a^2 )\right)\ d\xi  \le \mes (\Sigma) + \D^{-1} $$
\noindent where
$$\Sigma := \{ \xi \in B(\xi_0,1)| \| \xi v_i \|_\a \le \frac{ \D^{c\eps}  }{\sqrt \mu  L_i } \hbox{ for all } 1 \leq i \leq r \}. $$

(In fact,  $\D^{c\eps} $ can be replaced by $C \log \D$ for some large constant $C$.) By Lemma \ref{lemma:levelset},
$$\mes (\Sigma) \le \D^{-1 +O(c\eps)}. $$

We choose $c$ equal half of the reciprocal of the hidden constant in $O$. It follows that
$$\mes (\Sigma) \le \D^{-1+\eps/2}, $$
\noindent which implies
$$ \int_{B(\xi_0,1)} \exp(-\Omega( \sum_{i=1}^r \mu L_i^2 \| \xi v_i \|_\a^2 ))\ d\xi  \le \D^{-1+\eps/2} + \D^{-1}  \le \D^{-1+\eps},$$
\noindent concluding the proof.
\end{proof}

To conclude the proof of Theorem \ref{flot}, we need to establish Lemma \ref{lemma:levelset}.  This is the purpose of the next two sections.

\section{Lacunary sets inside GAPs}\label{lacunary-set}

Let $S$ be a set. We shall informally call a sequence $w_{1}, \dots,
w_{d}$ of elements of $S$ {\it lacunary} if the ratio
$\frac{|w_{i-1}|}{|w_{i}|}$ is large for all $1 < i \le d$. The goal
of this section is to show that a large GAP always contains
  a large lacunary subset with some prescribed properties.  This fact will be a key ingredient in the proof of Lemma \ref{lemma:levelset} (and hence Theorem \ref{flot}), which we give in the next section.

  To give the reader some motivation, let us consider the toy case when $Q$ is an interval, say $\{-s,-s+1, \dots, s-1, s\}$. Given a ratio $K>2$ and a constant $R>1$ (say), we can
  easily   find $d$ elements
  $w_{1}, \dots, w_{d}$ such that $|w_{d}| \ge R$ and $\frac{|w_{i}|}{|w_{i+1} |} \ge K$ where $d$ satisfies
  $$ \#Q \ll  K^{d} R. $$

The main result of this section is a generalization of the above observation for general GAPs.

\begin{lemma}[Lacunarity lemma]\label{lac}  Let $K \geq 1$, let $Q$ be a symmetric GAP of rank $r$,
and let $R \geq 0$ be a radius.  Then there exists, for some $d \geq
0$, ``primary vectors'' $w_1,\ldots,w_d \in Q$, and ``secondary
vectors'' $w'_1,\ldots,w'_d \in Q$ with the following properties:
\begin{itemize}
\item[(i)] (Lacunarity) We have $|w_i| \geq K |w_{i+1}|$ for all $1 \leq i \leq d-1$.
\item[(ii)] (Secondary bounds) We have $|w_i| > R$ and $|w'_i| \leq |w_i|$ for all $1 \leq i \leq d$.
\item[(iii)]  (Many vectors) We have
\begin{equation}\label{qork}
 \# Q \leq \left[\prod_{i=1}^d O_r(K K_i)\right] \#(Q \cap B(0,R))
\end{equation}
where $1 \leq K_i \leq 1+K$ is the quantity
\begin{equation}\label{ki}
 K_i := 1 + K \left|\Im( \frac{w'_i}{w_i} )\right|.
\end{equation}
\item[(iv)] (Crude upper bound) We have
\begin{equation}\label{dr}
d \ll_r 1 + \frac{\log \frac{\# Q}{\# (Q \cap B(0,R))}}{\log K}.
\end{equation}
\end{itemize}
\end{lemma}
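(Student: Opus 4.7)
The approach is a greedy scale-decreasing construction paired with a pigeonhole-style covering argument. Define $Q_0:=Q$ and, inductively for $i\ge 1$, let $w_i\in Q_{i-1}$ be of maximal modulus, let $w'_i\in Q_{i-1}$ maximize $|\Im(w'_i/w_i)|$, and set $Q_i:=Q\cap B(0,|w_i|/K)$; stop at the first $d$ for which the maximum modulus in $Q_d$ is at most $R$. Lacunarity (i) and the lower bound $|w_i|>R$ in (ii) are immediate from the construction, and $|w'_i|\le|w_i|$ follows because $w'_i\in Q_{i-1}\subseteq B(0,|w_i|)$ by the extremality of $w_i$.

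The core of the argument is the per-step estimate $\#Q_{i-1}\ll_r KK_i\,\#Q_i$, which upon iteration produces \eqref{qork} once we note that $\#Q_d\le\#(Q\cap B(0,R))$. The extremal choices of $w_i$ and $w'_i$ force every $z\in Q_{i-1}$ to satisfy both $|\Re(z/w_i)|\le 1$ and $|\Im(z/w_i)|\le|\Im(w'_i/w_i)|$; hence $Q_{i-1}$ lies inside a rectangle $\Omega_i$ of dimensions $2|w_i|\times 2|w_i|\,|\Im(w'_i/w_i)|$ (in the orthonormal frame $\{w_i/|w_i|,\,\sqrt{-1}w_i/|w_i|\}$), which admits an $O(KK_i)$-cover by Euclidean balls of radius $|w_i|/(2K)$. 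Lemma \ref{pig}, applied with $\Omega=\Omega_i$ and $r=|w_i|/K$, then yields
\begin{equation*}
\#((Q-Q)\cap B(0,|w_i|/K))\ \ge\ \frac{\#(Q\cap\Omega_i)}{O(KK_i)}\ \ge\ \frac{\#Q_{i-1}}{O(KK_i)}.
\end{equation*}
Since $Q$ is symmetric, $Q-Q\subseteq 2Q$, and $2Q$ tiles as $O_r(1)$ translates of $Q$ (one per corner of the $2^r$-way subdivision of the doubled box). Combined with a further pigeonhole step this produces the key translation estimate $\#((Q-Q)\cap B(0,s))\ll_r\#(Q\cap B(0,s))$ for every $s>0$; applied at $s=|w_i|/K$ it gives $\#Q_{i-1}\ll_r KK_i\,\#Q_i$, completing (iii).

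For the crude bound (iv), the lacunarity $|w_i|/|w_{i+1}|\ge K$ controls $d$ by the logarithmic range of the moduli $|w_i|$, while partitioning $Q\setminus B(0,R)$ into the annuli carved out by the $Q_i$ and reapplying the translation estimate shows that this range is controlled by $\log(\#Q/\#(Q\cap B(0,R)))$ up to an $r$-dependent factor, matching \eqref{dr}.

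The principal obstacle throughout is the translation estimate $\#((Q-Q)\cap B(0,s))\ll_r\#(Q\cap B(0,s))$: the pigeonhole principle naturally outputs cardinality bounds inside the sumset $Q-Q$, and re-expressing them as bounds on $Q$ itself is what requires the full symmetric-GAP structure, via both the $2^r$-tiling of $2Q$ by $Q$-translates and the Doubling Lemma. This is the step where the $r$-dependence of the constants in \eqref{qork} enters, and everything else in the argument is essentially bookkeeping around it.
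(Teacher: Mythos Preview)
Your greedy construction and the rectangle-covering step are the right ideas, and they match the paper's approach in spirit. The gap is precisely where you yourself flag the ``principal obstacle'': the translation estimate
\[
\#\bigl((Q-Q)\cap B(0,s)\bigr)\ \ll_r\ \#\bigl(Q\cap B(0,s)\bigr).
\]
Your justification --- tile $2Q$ by $O_r(1)$ translates $q_k+Q$ and then apply ``a further pigeonhole step'' --- does not prove this. Tiling gives $\#(2Q\cap B(0,s))\le O_r(1)\max_k \#(Q\cap B(-q_k,s))$, and the only pigeonhole step available (Lemma~\ref{pig} applied to $Q$ with $\Omega=B(-q_k,s)$) yields $\#(Q\cap B(-q_k,s))\le\#\bigl((Q-Q)\cap B(0,2s)\bigr)$, which sends you back to $2Q$ at a larger scale rather than down to $Q$. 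The inequality you want is in fact a nontrivial geometry-of-numbers statement (essentially that lattice points in a symmetric convex body have $O_r(1)$ doubling), and it is not a consequence of the Doubling Lemma or Lemma~\ref{pig} alone.

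The paper sidesteps this entirely by a device you may want to adopt: instead of intersecting the \emph{same} GAP $Q$ with shrinking balls, it works with a sequence of dilates $2^{-d_0+i}Q$, choosing $w_i,w'_i$ inside $Q_i:=2^{-d_0+i}Q\cap B(0,|w_{i-1}|/K)$. The point is that $Q_i-Q_i\subseteq 2^{-d_0+i+1}Q$ automatically, so after the pigeonhole covering one lands directly inside $Q_{i+1}$ with no translation estimate needed. The price is an extra factor $O_r(1)^{d_0}$ relating $\#Q$ to $\#Q_1$, and one must separately argue (via a counting argument using the lacunary $w_i$'s to build a large proper sub-GAP) that the algorithm halts before step $d_0$; this is also what delivers~(iv), which in your sketch is left to a vague annulus argument that again leans on the unproven translation estimate.
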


\begin{remark} The secondary vectors are necessary here because $Q$ is taking values in the complex numbers; if $Q \subset \R$ then we could simply take $w'_i = 0$ (and thus $K_i=1$) for all $i$.  The reader may wish to follow the argument below in the real case (and for $R=0$), as it is somewhat simpler in that case.  The bound \eqref{qork} may seem strange, but it is best possible except for the $O_r(\cdot)$ factor, and we will need such a tight estimate in our applications.  The vectors $w_1,\ldots,w_d, w'_1,\ldots,w'_d$ are somewhat analogous to the \emph{Minkowski basis} of a lattice with respect to a convex body, thus \eqref{qork} can be viewed as a variant of Minkowski's second theorem.
\end{remark}

\begin{proof}  By increasing $K$ if necessary we may assume $K$ to be larger than any given constant depending on $r$.  We can also assume that $Q$ is not contained in $B(0,R)$, as the claim is obvious otherwise.

We perform the following algorithm.  We set $d_0 := C_r \left(1 + \frac{\log \frac{\# Q}{\#(Q \cap B(0,R))}}{\log K}\right)$ for some sufficiently large constant $C_r$ depending only on $r$.
\begin{itemize}
\item[Step 0] Initialize $i = 1$.  We also adopt the convention that $w_0=\infty$.
\item[Step 1] Let $Q_i := 2^{-d_0+i} Q \cap B(0,|w_{i-1}|/K)$.  If $Q_i \subset B(0,R)$ then set $d := i-1$ and \textbf{STOP}.  Otherwise, let $w_i \in Q_i$ be chosen such that $|w_i|$ is maximal; thus $|w_i| \leq |w_{i-1}|/K$, $|w_i| > R$ and $Q \subset B(0,|w_i|)$.
\item[Step 2] Let $w'_i \in Q_i$ be chosen to maximize the quantity $K_i$ defined in \eqref{ki}.  Observe that $|w'_i| \leq |w_i|$.
\item[Step 3] From elementary complex geometry we see that $Q_i$ is now contained in a rectangle of dimensions $O(|w_i|) \times O( \frac{K_i}{K} |w_i| )$.  This rectangle can be covered by $O( K K_i )$ disks of radius $|w_i|/2K$.  Applying Lemma \ref{pig}, we conclude that the set
$$ Q_{i+1} := 2^{-d_0+i+1} Q \cap B(0,|w_i|/K) \supset (Q_i-Q_i) \cap B(0,|w_i|/K)$$
obeys the lower bound
\begin{equation}\label{qinc}
 \# Q_{i+1} \gg \frac{1}{K K_i} \# Q_i.
 \end{equation}
\item[Step 4] Increment $i$ to $i+1$ and return to Step 1.
\end{itemize}
Since $w_1, w_2, \ldots$ have decreasing magnitude and lie in the finite set $Q$ we see that this algorithm terminates in finite time.  In fact we claim that this algorithm terminates before step $d_0$.  For if the algorithm reaches stage $d_0$, we have obtained $w_1, \ldots,w_{d_0} \in Q$ obeying the lacunarity condition $|w_i| \leq |w_{i-1}|/K$.  This implies that the GAP $\GAP( (w_1, \ldots, w_{d_0}), K/10 )$ is proper, and that the pairwise sums between $\GAP((w_1, \ldots, w_{d_0}), K/10)$ and $2Q \cap B(0,R/10)$ are distinct and contained in $(d_0 K+1) 2Q$.  But this implies that
$$ (K/10)^{d_0} \#(2Q \cap B(0,R/10)) \ll \#( d_0 K Q ) \leq O(d_0 K)^r \# Q.$$
Also, since $B(0,R)$ can be covered by $O(1)$ balls of radius $R/20$, we see from Lemma \ref{pig} that
$$ \#( 2Q \cap B(0,R/10) ) \gg \# (Q \cap B(0,R))$$
and thus
$$ \Omega(K)^{d_0-r} \ll O(d_0)^r \frac{\# Q}{\# (Q \cap B(0,R))}.$$
But from definition of $d_0$, we see that this is impossible if $C_r$ is chosen sufficiently large (recall we are taking $K$ large compared to $r$).  Thus we have $d \leq d_0$, which in particular implies that $w_1,\ldots,w_d$ and $w'_1,\ldots,w'_d$ lie in $Q$.  Since $Q$ is not contained in $B(0,R)$ we also have $d \geq 1$.

Next, we observe from \eqref{qinc} that
$$ \#(Q \cap B(0,R)) \geq \# Q_{d+1} \geq \left(\prod_{i=1}^d \Omega(\frac{1}{KK_i})\right) \# Q_1.$$
Now we can cover $Q$ by $O_r(1)^{d_0}$ copies of $Q_1 = 2^{-d_0+1} Q$, and thus
\begin{equation}\label{qork-2}
\# Q \leq \left[\prod_{i=1}^d O(K K_i)\right] O_r(1)^{d_0} \#( Q \cap B(0,R)).
\end{equation}
In particular, since $K+1 \geq K_1$ and $d_0 \geq d$ we have
$$ \frac{\# Q}{\# (Q \cap B(0,R))} \leq (K+1)^{2d} O_r(1)^{d_0};$$
using the definition of $d_0$ and recalling that $K$ is large
compared to $r$ we conclude that $d \gg_r d_0$.  The claim
\eqref{qork} now follows from \eqref{qork-2}. The remaining claims
are easily verified from the construction.
\end{proof}

\section{Proof of Lemma \ref{lemma:levelset}}\label{sec7}

We are now ready to prove Lemma \ref{lemma:levelset}.
In the following, $Q$ is fixed and we write $\D$ instead of $\D(Q)$.  We also fix $r$ and allow all implied constants to depend on $r$.  We may assume without loss of generality that $\D$ is large compared with $\eps$, since the claim is trivial otherwise.

Let $K := \D^{\eps}$; since $\D$ is assumed large compared to $\eps$, we see that $K$ is also.  We apply Lemma \ref{lac} (with $R=1$, and to the GAP $\frac{1}{K^4} Q$) to obtain vectors
\begin{equation}\label{kq}
w_1, \ldots, w_d, w'_1,\ldots,w'_d \in \frac{1}{K^4} Q
\end{equation}
for some $d = O(1/\eps)$ such that $|w_i| \geq K |w_{i+1}|$ for all $1 \leq i \leq d-1$,  $|w_i| > 1$ and $|w'_i| \leq |w_i|$ for all $1 \leq i \leq d$, and
$$ \# \left(\frac{1}{K^4} Q\right) \leq \left[\prod_{i=1}^d O(K K_i)\right] \#(Q \cap B(0,1))$$
where $K_i$ is defined in \eqref{ki}.  Since $Q$ has rank $O(1)$, we have
$$ \# \left(\frac{1}{K^4} Q\right) \gg K^{-O(1)} \# Q = \D^{1-O(\eps)} \#(Q \cap B(0,1))$$
and thus (since $d = O(1/\eps)$ and $\D$ is large compared with $\eps$)
\begin{equation}\label{pidk}
 \prod_{i=1}^d K K_i \geq \D^{1-O(\eps)}.
\end{equation}
From \eqref{omdef}, Lemma \ref{panorm}, and \eqref{kq} we see that
\begin{equation}\label{xiwi}
\| \xi w_i \|_\a, \|\xi w'_i \|_\a \ll \frac{1}{K^3}
\end{equation}
for all $1 \leq i \leq d$ and $\xi \in \Sigma$.

For $1 \leq i \leq d$, define $\zeta_i := \frac{1}{K^2 w_i}$ and $\zeta'_i := \sqrt{-1} \frac{1}{K K_i w_i}$.  Let $P$ be the GAP
$$ P := \GAP\left( (\zeta_1, \ldots, \zeta_d), \frac{K}{100} \right) + \GAP\left( (\zeta'_1, \ldots, \zeta'_d), (\frac{K_1}{100}, \ldots, \frac{K_d}{100}) \right).$$
One should view $P$ as a kind of ``dual'' to $Q$.  It has the following properties:

\begin{lemma}[Properties of $P$] We have
\begin{itemize}
\item[(i)] $P$ is proper.
\item[(ii)] $\# P \geq \D^{1-O(\eps)}$.
\item[(iii)] $P \subset B( 0, O(1/K) )$.
\item[(iv)] If $z, z' \in P$ are distinct, then $z+\Sigma$ and $z'+\Sigma$ are disjoint.
\end{itemize}
\end{lemma}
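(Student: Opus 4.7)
My plan is to prove (iii), (i), (ii), (iv) in that order; (iii) feeds the small-ball input for (iv), and (i) is used to parametrize elements of $P-P$ uniquely in (iv). I expect (iv) to be the main obstacle, since it couples the combinatorial structure of $P$ to the analytic definition of $\Sigma$ via a real-part extraction that uses both the primary and secondary vectors. For \textbf{(iii)}, lacunarity combined with $|w_d| > 1$ gives $|w_i| \geq K^{d-i}$, so $|\zeta_i| = 1/(K^2|w_i|)$ and $|\zeta'_i| = 1/(KK_i|w_i|)$ decay geometrically in $d-i$; the coefficient bounds $|n_i| \leq K/100$ and $|m_i| \leq K_i/100$ summed against a geometric series then yield $|z| = O(1/K)$ for every $z \in P$.

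For \textbf{(i)}, it suffices to show that any relation $\sum a_i \zeta_i + \sum b_i \zeta'_i = 0$ with integers $|a_i| \leq K/50$ and $|b_i| \leq K_i/50$ must be trivial. I would multiply by $K^2 w_d$ and rewrite it as $\sum_i (a_i + \sqrt{-1} K b_i/K_i)(w_d/w_i) = 0$. Using $|a_i| + K|b_i|/K_i \leq K/25$ together with $|w_d/w_i| \leq K^{-(d-i)}$, the tail over $i < d$ is bounded by $\sum_{j\geq 1} (K/25) K^{-j} \leq 1/20$ for $K$ large. Hence $|a_d + \sqrt{-1} K b_d/K_d| \leq 1/20$; separating real and imaginary parts (and using $K_d \leq K+1$ on the latter) forces the integers $a_d, b_d$ to vanish, and downward induction on $d$ clears every pair $(a_i, b_i)$.

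For \textbf{(ii)}, once (i) is known we have $\#P = \prod_{i=1}^d (2\lfloor K/100\rfloor+1)(2\lfloor K_i/100\rfloor+1)$, and each factor is at least $K/100$ (respectively $K_i/100$), even when $K_i < 100$ (in which case the factor is $1 \geq K_i/100$). Combining with \eqref{pidk} gives $\#P \geq 100^{-2d}\prod_i K K_i \geq 100^{-2d} \D^{1-O(\eps)}$. Since $d = O(1/\eps)$, the prefactor $100^{-2d} = \exp(-O(1/\eps))$ is absorbed into $\D^{-O(\eps)}$ once $\D$ is large enough in terms of $\eps$, giving $\#P \geq \D^{1-O(\eps)}$.

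For \textbf{(iv)}, suppose for contradiction that $z + \xi = z' + \xi'$ with $z \neq z'$ in $P$ and $\xi, \xi' \in \Sigma$. Then $\eta := z - z' = \xi' - \xi$ is a nonzero element of $P-P$, so by (i) it has a unique expansion $\eta = \sum a_i \zeta_i + \sum b_i \zeta'_i$ with $|a_i| \leq K/50$, $|b_i| \leq K_i/50$ and not all coefficients zero. Let $d^*$ be the largest index with $(a_{d^*}, b_{d^*}) \neq (0,0)$. Rerunning the manipulation from (i) with $d^*$ in place of $d$ gives $\eta K^2 w_{d^*} = (a_{d^*} + \sqrt{-1} K b_{d^*}/K_{d^*}) + E$ with $|E| \leq 1/20$. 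If $a_{d^*} \neq 0$, the real part has magnitude $\geq 19/20$. If $a_{d^*} = 0$, then $b_{d^*} \neq 0$, and the bound $|b_{d^*}| \leq K_{d^*}/50$ forces $K_{d^*} \geq 50$; in that case I would use $w'_{d^*}$ instead, so that the $i = d^*$ contribution $\sqrt{-1} K b_{d^*}/K_{d^*} \cdot (w'_{d^*}/w_{d^*})$ has real part of magnitude $|b_{d^*}|(K_{d^*}-1)/K_{d^*} \geq 49/50$ by the definition of $K_{d^*}$, while the tail remains $\leq 1/20$. In either case $|\Re(\eta w_{d^*})|$ or $|\Re(\eta w'_{d^*})|$ is $\gg 1/K^2$, and since $|\eta K^2 w_{d^*}|, |\eta K^2 w'_{d^*}| \leq O(K)$ the relevant product lies in $B(0, O(1/K))$, so Lemma \ref{panorm}(iii) yields the corresponding $\a$-norm $\gg 1/K^2$. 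But the triangle inequality together with \eqref{xiwi} gives $\|\eta w_{d^*}\|_\a \leq \|\xi w_{d^*}\|_\a + \|\xi' w_{d^*}\|_\a \ll 1/K^3$ (and likewise for $w'_{d^*}$), contradicting the previous line for $K$ large.
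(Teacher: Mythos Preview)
Your proof is correct and follows essentially the same approach as the paper: the lacunary geometric bounds for (iii), the leading-index isolation for (i), the product lower bound combined with \eqref{pidk} for (ii), and the real-part extraction via $w_{d^*}$ or $w'_{d^*}$ for (iv). The only cosmetic difference is ordering: the paper first uses \eqref{xiwi} and Lemma~\ref{panorm}(iii) to force $\Re((z-z')w_j)$, $\Re((z-z')w'_j)=O(1/K^3)$ and then shows the combinatorics make this impossible, whereas you first compute the real parts from the combinatorics to be $\gg 1/K^2$ and then invoke the $\a$-norm bound for the contradiction.
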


\begin{proof} We first verify (i).  If $P$ is not proper, then we have a linear relation
$$ n_1 \zeta_1 + \ldots + n_d \zeta_d + m_1 \zeta'_1 + \ldots + m_d \zeta'_d = 0$$
for some integers $n_1,\ldots,n_r,m_1,\ldots,m_r$, not all zero, with $|n_i| \leq K/50$ and $|m_i| \leq K_i/50$ for $1 \leq i \leq r$.  Let $j$ be the largest index such that $(n_j, m_j)$ is non-zero.  If $1 \leq i < j$, then from the properties of $w_i$ we have
$$ |w_i| \ge K^{j-i} |w_j|$$
and so
$$|\zeta_i| \leq \frac{|\zeta_j|}{K^{j-i}}; \quad |\zeta'_i| \leq \frac{|\zeta_j|}{K^{j-i-1} K_i}.$$
From the triangle inequality we then have
$$ |n_1 \zeta_1 + \ldots + n_{j-1} \zeta_{j-1} + m_1 \zeta'_1 + \ldots + m_{j-1} \zeta'_{j-1}| \leq \frac{|\zeta_j|}{10}$$
and thus
$$ |n_j \zeta_j + m_j \zeta'_j| \leq \frac{|\zeta_j|}{10}.$$
On the other hand, since $n_j,m_j$ are integers which are not both zero, and $\zeta'_j = \sqrt{-1} \frac{K}{K_j} \zeta_j$, and $K/K_j \geq 1/2$, we see that
$$ |n_j \zeta_j + m_j \zeta'_j| \geq \frac{|\zeta_j|}{2},$$
a contradiction.

From (i) we also see that
$$ \# P \geq \prod_{i=1}^d \Omega(\frac{K}{100}) \Omega(\frac{K_i}{100})$$
and so (ii) now follows from \eqref{pidk} (recalling that $d=O(1/\eps)$ and $\D$ is large compared to $\eps$).

Now we prove (iii).  If $z \in P$, then we see from the triangle inequality that
$$ |z| \leq \frac{1}{100} \sum_{j=1}^d K |\zeta_j| + K_j |\zeta'_j| \leq \frac{1}{50 K} \sum_{j=1}^d \frac{1}{|w_j|} \leq \frac{1}{10 K |w_d|}$$
by lacunarity.  But by construction $|w_d| \geq 1$, and the claim follows.

Now we prove (iv).  If the claim was false, then we could find distinct $z, z' \in P$ and $\xi,\xi' \in \Sigma$ such that $z-z' = \xi-\xi'$.  We can then write
$$ z-z' = n_1 \zeta_1 + \ldots + n_d \zeta_d + m_1 \zeta'_1 + \ldots + m_d \zeta'_d = \xi - \xi'$$
for some integers $n_1,\ldots,n_r,m_1,\ldots,m_r$, not all zero, with $|n_i| \leq K/50$ and $|m_i| \leq K_i/50$.

Let $j$ be the largest index such that $(n_j, m_j)$ is non-zero.  From \eqref{xiwi} and Lemma \ref{panorm} we have
\begin{equation}\label{zzw}
\| (z-z') w_j \|_\a, \|(z-z') w'_j \|_\a \ll \frac{1}{K^3}.
\end{equation}
On the other hand, from the triangle inequality we have
$$ |z-z'| \leq \frac{1}{100} \sum_{i=1}^j K |\zeta_i| + K_i |\zeta'_i| \leq \frac{1}{50K} \sum_{i=1}^j \frac{1}{|w_i|} \leq \frac{1}{10 K|w_j|}$$
by lacunarity, and thus
$$|(z-z') w'_j| \leq |(z-z') w_j| \ll \frac{1}{K}.$$
If $K$ is large enough, then we can apply Lemma \ref{panorm} to conclude from \eqref{zzw} that
$$
\Re((z-z') w_j), \Re((z-z') w'_j) = O(\frac{1}{K^3}).$$
On the other hand, observe that
$$ |z-z' - (n_j \zeta_j + m_j \zeta'_j)| \leq \frac{1}{100} \sum_{i=1}^{j-1} K |\zeta_i| + K_i |\zeta'_i| \ll \frac{1}{50K} \sum_{i=1}^{j-1} \frac{1}{|w_i|} \ll \frac{1}{10 K^2|w_j|}$$
and so by the triangle inequality
$$
|\Re((n_j \zeta_j + m_j \zeta'_j) w_j)|, |\Re((n_j \zeta_j + m_j \zeta'_j) w'_j)| \leq \frac{1}{5K^2}$$
if $K$ is large enough.  On the other hand, by construction of $\zeta_j, \zeta'_j$ we have
$$ \Re((n_j \zeta_j + m_j \zeta'_j) w_j) = \frac{n_j}{K^2}.$$
Since $n_j$ is an integer, we conclude $n_j=0$.  In that case we have
$$ |\Re((n_j \zeta_j + m_j \zeta'_j) w'_j)| = \frac{|m_j|}{K K_j} |\Im( \frac{w'_j}{w_j} )| \geq \frac{|m_j|}{2K^2}$$
if $K_j \geq 2$, by construction of $\zeta'_j$ and $K_j$.  Since $m_j$ is an integer,
we conclude $m_j=0$.  On the other hand, if $K_j < 2$, then we have $m_j=0$ as well, since $|m_j| \leq K_j/50$.  But $(n_j,m_j)$ is non-zero, a contradiction.
\end{proof}

From properties (ii), (iii), (iv) we see that
$$ \mes(B(0,O(1))) \geq \D^{1-O(\eps)} \mes(\Sigma)$$
and the claim \eqref{omeq} follows.

\section{Structure of weak elements}\label{sec9}

 Let $Q$ be a GAP.  Extend $Q$ by a new dimension generated by a new element $z$; $Q'= Q+ \{-kz, \cdots, kz\}$. We call $z$ {\it weak} if
$\#Q'$ is only slightly more than $\#Q$. The goal of this section is to quantify (and generalize) the following phenomenon:

\vskip2mm

\centerline {\it The set of weak $z$ has small entropy. }

The reader may find the following simple example illustrative.
Assume that $Q$ is the interval $[-s, \dots, s]$. Assume that $Q':=
Q+ \{-kz, \cdots, kz\}$ has cardinality at most $ls$, where
$l=k^{\delta}$ for some small positive $\delta$.

Consider the interval $Q_1 := \{x \in \Z| |x| \le sl/k \}$. The sets $x+ \{z, \cdots, kz \}, x\in Q_{1}$ are subsets of $Q'$. Since $\#Q_{1} > ls/k$, these sets are not disjoint. Thus, we have
$x+jz =x'+j'z$ for some distinct $x, x' \in Q_{1}$ and $1\le j\neq j' \le k$. This implies that
$$z \in \bigcup_{1\le \tau \le k}  \frac{1}{\tau} \cdot (Q_{1}-Q_1). $$

This already gives a bound $k\#(Q_1-Q_1)=O(l\#Q)= O(ls)$ on the cardinality of the possible $z$. But  this bound can be improved further (this improvement is critical later on).
Consider the set $x+ \{0, \cdots, lz\}$ with $x \in Q$. By the same argument as before, these sets are not disjoint, and we can conclude that
$$z \in \bigcup_{1\le \tau' \le l} \frac{1}{\tau'} \cdot (Q-Q). $$

Thus, $z$ has two representations
$$z = \frac{x}{\tau} = \frac{x'}{\tau'} $$
\noindent for $x\in Q_{1}-Q_1, 1\le \tau \le k$ and $x' \in Q-Q, 1\le \tau'\le l$.  If $\frac{x}{\tau}$ is irreducible, then $\tau \le l$ and the number of $z$'s of this form is only at most
$l \#(Q_{1}-Q_1)= O(\frac{l^{2}}{k} s)$. If it is not, then $\operatorname{gcd}(x, \tau) \ge  \frac{\tau}{l}$. The number of $x$ satisfying this condition in $Q_{1}-Q_1$ is at most
$O(\frac{l}{\tau} \# Q_{1})$. Thus, the number of $z$'s  is at most
$\sum_{\tau=l}^{k} O( \frac{l}{\tau} \# Q_{1} ) = O(\frac{l^{2}}{k} s)$, using the bound on $\#Q_{1}$ and the fact that $l= k^{\Omega(1)}$. Thus, altogether we obtain the bound
$$ O\left(\frac{l^{2}} {k} s\right)= O\left(\frac{l^{2}}{k}\#Q \right)$$
\noindent which is much better than the previous bound $O(l \#Q)$. The term $k^{-1}$ will play a critical role in later proofs.

The main result of this section is
a generalization of this very special case.

 \begin{lemma} \label{lemma:weakz} Let $w_1,\ldots,w_r$ be complex numbers and $Q=\GAP((w_{1}, \dots, w_{r}), (L_{1}, \dots, L_{r}))$. Let $z$ be a complex number and $k$ a positive integer. Define
$$Q':= Q + \GAP(z,k)= Q+ \{-kz, \dots, kz\}. $$
Let $Z$ denote the set of all complex numbers $z$ such that
$$ \D( Q' ) < l \D(Q).$$
Then $Z$ has a $24$-net of size at most  $1 + O_{r}(l^{4}k^{-1} \D(Q))$.
\end{lemma}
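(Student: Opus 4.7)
The plan is to generalize the two-scale pigeonhole argument from the one-dimensional interval toy case (the paragraphs immediately preceding the lemma) to arbitrary symmetric GAPs of rank $r$, using the dispersion $\D(Q)$ in place of the cardinality $\#Q$. I would first dispose of trivial parameter ranges; assuming $1 \le l \le k$, I would unpack the hypothesis $\D(Q') < l\D(Q)$ as $\#Q' \cdot \#(Q \cap B(0,1)) < l \#Q \cdot \#(Q' \cap B(0,1))$ and split into two regimes. In the regime where $\#(Q' \cap B(0,1)) = O(\#(Q \cap B(0,1)))$, the hypothesis reduces to the cardinality inequality $\#Q' \ll l \#Q$. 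In the opposite regime, many of the translates $(Q+jz) \cap B(0,1)$ with $1 \le |j| \le k$ must be non-empty, so $z$ lies within $O(1)$ of a quotient $(q_1 - q_2)/j$ with $q_1, q_2 \in Q \cap B(0, k+1)$ and $1 \le j \le k$; a direct count using the doubling lemma of Section~\ref{sec6} then bounds the resulting candidates well within the claimed size.

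The main work is in the cardinality regime. I would apply pigeonhole at two scales. \emph{Small scale}: pair $Q$ with the short progression $\{0, z, 2z, \ldots, lz\}$. Since $(l+1)\#Q > \#Q'$, two translates $Q + jz, Q + j'z$ must meet, yielding $\tau'z = q - q'$ with $q, q' \in Q$ and $\tau' \in \{1,\ldots,l\}$ up to $O(1)$ metric error. \emph{Large scale}: take a shrunken sub-GAP $Q_1 := \GAP((w_1,\ldots,w_r),(cL_1,\ldots,cL_r))$ with $c$ of order $(l/k)^{1/r}$, so that the doubling lemma applied to $Q = (1/c) Q_1$ gives $\#Q_1 \gg_r c^r \#Q \asymp (l/k) \#Q$; then $k \#Q_1 > \#Q'$ and pigeonhole with translates $\{0, z, \ldots, kz\}$ yields $\tau z \in Q_1 - Q_1$ for some $\tau \in \{1,\ldots,k\}$.

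Combine these two representations. From $\tau z \approx x \in Q_1 - Q_1$ and $\tau' z \approx x' \in Q - Q$ we extract the relation $\tau' x \approx \tau x'$ in $\C$, the complex metric analog of the divisibility relation from the toy case. As there, I would split on the size of $\tau$: if $\tau \le l$, pairs $(\tau, x)$ give $\ll l \cdot \#(Q_1 - Q_1)$ candidate $z$'s, each cut down to a single $24$-ball by the local density of $Q_1 - Q_1$; if $\tau > l$, the relation forces $x$ to lie in a sub-GAP of $Q_1 - Q_1$ of proportionally smaller size (the analog of ``$\gcd(x,\tau)$ is large''), and summing over $\tau \in \{l, \ldots, k\}$ contributes an additional factor of order $l^2/k$, just as in the toy calculation. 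Together these give a covering of $Z$ by $O_r(l^4 k^{-1} \D(Q))$ balls of radius $O(1)$, yielding the desired $24$-net bound after accounting for the local density $\#(Q \cap B(0,1))$ that converts $\#Q$ to $\D(Q)$.

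The hard part, I expect, is this last combining step: the $\gcd$-based divisibility arithmetic of the toy case cannot be taken literally in $\C$, and must be replaced by a metric counting argument that uses the GAP structure of $Q - Q$ and $Q_1 - Q_1$ (both of which are GAPs of controlled rank $O(r)$ and controlled dispersion). A secondary technical point is that $Q$ may fail to be proper, so the count $\#Q_1 \gg_r c^r \#Q$ has to be obtained from the doubling lemma rather than from a naive volume computation; and the $O(1)$ metric slack accumulating through the two pigeonhole steps is what forces the net radius to be a positive absolute constant such as $24$ rather than something sharper.
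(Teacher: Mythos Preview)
Your proposal has a genuine gap at exactly the point you flag as hard: the combining step. The $\gcd$ arithmetic of the toy case has no workable analog in $\C$, and you do not supply one. Concretely, having $\tau z\in 2Q_1$ and $\tau' z\in 2Q$ yields only the tautology $\tau'(\tau z)=\tau(\tau' z)$; the crucial implication ``if $x/\tau$ is in lowest terms then $\tau\le l$'' relies on unique factorisation in $\Z$ and has no counterpart when $z,x,x'$ are arbitrary complex numbers sitting in GAPs. Without it, your two pigeonhole relations separately recover only the naive bound $O_r(l\,\D(Q))$, not the required $O_r(l^4 k^{-1}\D(Q))$. Your regime split is also shaky: in the ``opposite regime'' you only obtain $jz\in Q+B(0,1)$ for some nonzero $j\le k$, with no control on where $q$ sits inside $Q$, which is far too many candidates to count directly.

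The paper avoids the two-scale pigeonhole entirely. Working directly with the dispersion hypothesis (no regime split), a packing argument with a $2$-net $W'\subset\tfrac12 Q$ shows that the set $L=\{|j|\le 2k:\ jz\in 2Q+B(0,2)\}$ satisfies $\#L\gg_r k/l$. Pigeonholing inside $L$ then gives $iz\in 4Q+B(0,4)$ for some $0<i\ll_r l$; this corresponds to your small-scale step. The replacement for your large-scale step is a bound on $|z|$ itself: the paper introduces the smallest radius $R_0$ with $\#(10Q\cap B(0,R_0))\ge C_r\,lk^{-1}\#Q$, and a separation argument (the nearest points $\zeta_j\in 2Q$ to $jz$, for $j\in L$, are pairwise $\ge |z|-4$ apart, so the disjoint sets $\zeta_j+(10Q\cap B(0,R_0))$ all lie in $12Q$) forces $|z|<3R_0$. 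Now $iz$ lies within $O(1)$ of $4Q\cap B(0,O_r(lR_0))$; covering this ball by $O_r(l^2)$ balls of radius $R_0/4$ and invoking the minimality of $R_0$ bounds the $4$-entropy of this set by $O_r(l^3 k^{-1}\D(Q))$, and the $O_r(l)$ choices for $i$ give the final $l^4$. Thus the ball $B(0,R_0)$ plays the role of your shrunken GAP $Q_1$, and the metric bound on $|z|$ replaces the $\gcd$ argument.
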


\begin{remark}
The  $24$-net  can be replaced by  an $1$-net  if we replace the bound $1 + O_{r}(l^{4}k^{-1} \D(Q))$ by
$ O_{r}(1+ l^{4}k^{-1} \D(Q))$.  However, it is important to us to have the current formulation, as in the case when $l^{4}k^{-1} \D(Q)=o(1)$
the net will have size exactly $1$.  The power of $l^4$ might be improvable, but we will not need this improvement here, as $l$ will always be relatively small for us compared to other parameters such as $k$ and $\D(Q)$.
\end{remark}

\begin{proof} Let $z \in Z$. By definition of $Z$, we have
$$ \frac{\#(Q + \GAP(z,k))}{\#((Q + \GAP(z,k)) \cap B(0,1))} \le l \frac{\# Q}{\#(Q \cap B(0,1))}.$$
Let $W \subset \frac{1}{2} Q$ be a maximal $1$-net of $\frac{1}{2} Q$, then we see that the sets $w + (Q \cap B(0,1))$ for $w \in W$ cover $\frac{1}{2} Q$, and thus
$$ \# W \geq \frac{\# (\frac{1}{2} Q)}{\#(Q \cap B(0,1))} \gg_r \frac{\# Q}{\#(Q \cap B(0,1))},$$
\noindent thanks to the easily verified fact that $\# (\frac{1}{2} Q) \gg_{r } \#Q$.

Refine the $1$-net $W$ to a maximal $2$-net $W'$. We have $\# W' \gg_{r }\# W$ and thus
\begin{equation}\label{quzak}
 \#(Q + \GAP(z,k)) \ll_r l  \#((Q + \GAP(z,k)) \cap B(0,1)) \# W'.
\end{equation}

Now, define the set
$$ L := \{ -2k \leq j \leq 2k | jz \in 2Q + B(0,2) \}.$$

A simple greedy algorithm argument (using the symmetry of $L$) shows that we can find a set $J \subset \{-k,\ldots,k\}$ of cardinality $\# J \gg \frac{k}{\# L}$ such that $j_1 - j_2 \not \in L$ for any distinct $j_1, j_2 \in J$.  Consider the sets
$jz + W' + ((Q + \GAP(z,k)) \cap B(0,1))$ for $j \in J$. By the construction, we can verify that

\begin{itemize}
\item[(a)] These sets are disjoint (thanks to the definition of $J$ and $L$).
\item[(b)] Every set lies in $2(Q + \GAP(z,k))$ (since $|j| \le k$ and $W' \subset \frac{1}{2}Q). $
\item[(c)] Each set has  cardinality $(\# W') \#((Q + \GAP(z,k)) \cap B(0,1))$ (since $W'$ is a $2$-net).
\end{itemize}

It follows that
$$ \#J (\# W') \# ((Q + \GAP(z,k)) \cap B(0,1)) \ll \#(2(Q + \GAP(z,k))) \ll_r \#(Q + \GAP(z,k)).$$
Combining this with \eqref{quzak} we conclude that
$$ \# J \ll_{r} l. $$
On the other hand, $\#J \gg \frac{k}{\# L}$, so
\begin{equation}\label{lag}
\# L \gg_r l^{-1}k,
\end{equation}
\noindent  which asserts that many multiples of $z$ are close to $2Q$.

Let $R_0$ be the smallest radius such that
\begin{equation}\label{qbr}
 \#\left(10Q \cap B(0,R_0)\right) \geq C_{r} l k^{-1} \# Q,
\end{equation}
\noindent where $C_{r} $ is a sufficiently large constant depending on $r$.

By definition,
\begin{equation}\label{qbr-2}
 \#\left(10Q \cap B(0,R_0/2)\right) =O_{r} (lk^{-1} \# Q).
\end{equation}
Assume, for a moment,  that $|z| \geq 2R_{0} +4$.  By the definition of $L$, we can find, for each $j \in L$, an element  $\zeta_j \in 2Q$ such that $|jz-\zeta_j| \leq 2$.
(If there are many $\zeta_{j}$, fix one arbitrarily.) Let $j$ and $j'$ be two different indices, then
$$|\zeta_{j}- \zeta_{j'}| \ge |(j-j')| |z| -4 \ge |z| -4. $$
This implies that the  sets $\zeta_j + (10Q \cap B(0,R_0))$ are disjoint. Furthermore, as $\zeta_{j} \in 2Q$, they all  lie in $12Q$. Therefore,
$$ (\# L) \#\left(10Q \cap B(0,R_0)\right) \leq \#(12Q) \ll_r \# Q.$$
But this contradicts \eqref{qbr} if we choose $C_r$ sufficiently large. Thus we have
$$ |z| < 2R_0+4.$$

If $R_0 <10$, then  $z < 24$ and  $Z$ has a maximal $24$-net of cardinality $1$ and we are done.

From now on, we assume $R_0 \geq 10$.  Thus $|z| < 3R_0$.

From \eqref{lag} and the pigeonhole principle we can find $j, j' \in L$ such that $0 < |j-j'| \ll_r l $.  Thus there exists an integer $0 < i \ll_r l$ such that $iz \in 4Q + B(0,4)$.  Since $|z| \leq 3R_0$, we have $|iz| \ll_r l R_0$ and thus in fact $iz \in (4Q \cap B(0,O_r(l R_0))) + B(0,4)$.  Thus, to obtain the desired bound on $\N_1(Z)$, it will suffice to show that
$$\N_4(4Q \cap B(0,O_r(l R_0))) \ll_{r} l^{3} k^{-1} \D(Q).$$

Let $Z'$ be any $4$-net of $4Q \cap B(0,O_r(l R_0))$.  Observe that the sets $\zeta' + (Q \cap B(0,1))$ for $\zeta' \in Z'$ are disjoint and lie in $5Q \cap B(0,O_r(l R_0))$.  Thus we have
$$ (\# Z') \#(Q \cap B(0,1)) \leq \#(5Q \cap B(0,O_r(l R_0))).$$

Since $\D(Q) = \frac{\# Q}{\#(Q \cap B(0,1))}$, it suffices to show that

$$\#(5Q \cap B(0,O_r(l R_0))) \ll_{r}  l^{2}k^{-1} \# Q.$$

But (as we are working on the plane) we can cover $B(0,O_r(l R_0))$ by $O_r(l^{2})$ balls of radius $R_0/4$, so by Lemma \ref{pig} we have
$$ \#(10Q \cap B(z_0,R_0/2)) \gg_r l^{-2} \#(5Q \cap B(0,O_r(R_0))).$$
Comparing this with \eqref{qbr-2} we obtain the claim.
\end{proof}

\section{Proof of the inverse theorems}\label{sec10}

We first prove Theorem \ref{ilo}. The proof of Theorem \ref{ilo-sparse} can be obtained with some minor modifications.

Let us  begin with a simple reduction.
Since $\a$ has $O(1)$-controlled second moment, from Chebyshev's inequality we see that $ |\a| \geq n^{A+10} $ with probability $O(n^{-2A-20})$.  Thus if we let $\a'$ be $\a$ conditioned on the event $|\a| \leq n^{A+10}$, we see from the union bound that $p_{\beta,\a}(\v)$ and $p_{\beta,\a'}(\v)$ differ by at most $O( n^{-2A-19} )$.  Thus (modifying $p$ slightly if necessary) we may replace $\a$ by $\a'$, and so we may assume for the rest of the proof that
\begin{equation}\label{polysize}
|\a |  \le  n^{A+10}  = n^{O(1)}  \hbox{ almost surely}.
\end{equation}

Consider a point  $\v$  in $S_{n, \a,\beta,p}$.  Let  $\V =
(V_1,\ldots,V_n)$ be the vector obtained from
 $\beta^{-1} \v/2$ by rounding the coordinates to  the nearest Gaussian integer multiple of $n^{-A-20}$.
 Clearly thus $|\V| = \Theta (\beta^{-1})$. Furthermore, by \eqref{polysize}
$$ p_{1,\a}(\V) \geq p_{\beta,\a}(\v) \geq p.$$

By Lemma \ref{concball}, it follows that
$$ \P_1(\V) \gg p.$$

We are going to find a small $O(1)$-net (in the $l^\infty$ norm) for  the set of all possible $\V$ satisfying the last inequality.
Set $k := n^{1/2 -\eps}$, and let $d \geq 1$ be an integer to be chosen later ($d$ will be bounded by a constant.)

Now we perform the following algorithm (following the proof of \cite[Theorem 2.4]{TVsing}) to construct some elements $w_1,\ldots,w_r$ in $\V$ for some $0 \leq r \leq d$.

\begin{itemize}
\item[Step 0] Initialize $r=0$. Set $\V^{[0]}=\V$.
\item[Step 1] Count how many $V_{j } \in \V^{[r]}$ there are such that
$$\D(\GAP((w_1, \ldots, w_r, V_j),k)) \geq n^\eps \D(\GAP((w_1, \ldots, w_r),k)).$$
If this number is less than $k^2$ then \textbf{STOP}. Otherwise, move on to Step 2.

\item[Step 2] Applying Lemma \ref{pumar-lemma}(v), we can find  some $V_j \in V^{[r]}$ such that
$$\D(\GAP((w_1, \ldots, w_r, V_j),k)) \geq n^\eps \D(\GAP((w_1, \ldots, w_r),k))$$
and
$$ \P_{1}( \V^{[r]} w_1^{k^2} \ldots w_r^{k^2} ) \leq \P_{1}( \V^{[r+1]} w_1^{k^2} \ldots w_r^{k^2} V_j^{k^2} ),$$

\noindent where $\V^{[r+1]}$ is obtained from $\V^{[r]}$ by deleting
a set of $k^{2}$ elements. We then set $w_{r+1} := V_j$ and then
increment $r$ to $r+1$.  If $r=d$ then \textbf{STOP} (with an
error); otherwise return to Step 1.
\end{itemize}

By induction, at each stage in this algorithm we have
$$ \P_{1}( \V^{[r]} w_1^{k^2} \ldots w_r^{k^2} )  \gg p$$
and hence by Theorem \ref{flot} and Lemma \ref{pumar-lemma}(ii)
$$ \D(\GAP((w_1, \ldots, w_r),k)) \ll p^{-1/(1-\eps)} \ll n^{O(\eps)} p^{-1} = n^{O(1)}.$$

On the other hand, by construction we have
$$ \D(\GAP((w_1, \ldots, w_r),k)) \geq n^{r \eps}.$$
Thus, the algorithm must terminate in Step 1 for some $r
=O_{\eps}(1)$.  At this point, we have obtained a tuple $(w_1,
\ldots, w_r)$ of elements in $\V$ with $r = O_\eps(1)$ such that
\begin{equation}\label{wowr}
 \D(\GAP((w_1, \ldots, w_r),k)) \ll_\eps n^{O(\eps)} p^{-1}
\end{equation}
and such that
$$ \D(\GAP((w_1, \ldots, w_r, V_j),k )) < n^\eps \D(\GAP((w_1, \ldots, w_r),k))$$
for all but at most $rk^2 = O_{\eps} (n^{1-2\eps}) \le n^{1-\eps}$ values of $j$.

Now we have enough information to construct the net. First we show
that it costs a relatively small factor to take care of the
exceptional coordinates.
 There are at most $O_{\eps}(k^2) \le n^{1-\eps}$ exceptional values of $j$; we can fix the values of the exceptional $j$ by paying a factor of
 $$ \sum_{i=0}^{n^{1-\eps}} \binom{n}{i} = \exp(o(n)).$$
 For each exceptional $j$, $V_j$ is a Gaussian integer multiple of $O(n^{-A-20})$ of magnitude $O(\beta^{-1})$.
 Thus, the number of possible choices for $V_{j}$ is $\beta^{-1} n^{O(1)}$. So, after we fix the exceptional coordinates $j$, there are at most
 $$(\beta^{-1} n^{O(1)}) ^{n^{1-\eps}} = \exp(o(n)) $$
 \noindent ways to specify the values of these coordinates.

   As for the remaining (non-exceptional)
   coordinates $V_j$, Lemma \ref{lemma:weakz}  (along with \eqref{wowr}, the definition of $k$, and the bound $r=O_\eps(1)$)
   shows that each such $V_j$ lies within distance $O(1)$ of a set of cardinality $ 1+ O_{\eps} (n^{-1/2 + O(\eps)} p^{-1} )$.
  The set of all vectors $V$ has a $O(1)$-net in the $l^\infty$ norm of size
  at most
  $$\exp(o(n)) \left(1+ O_{\eps} (n^{-1/2 + O(\eps)} p^{-1}) \right)^{n} =O( n^{(-1/2+ O(\eps))n} p^{-n} ) + \exp(o(n))$$
  assuming $n$ sufficiently large depending on $p, \eps$.

  \noindent Changing a $O(1)$-net to a $1$-net costs only a $O(1)$ factor. Thus, we can conclude that there is an $1$-net of size at most
$O(n^{(-1/2+ O(\eps))n} p^{-n}) + \exp(o(n))$. As we can choose $\eps$ arbitrarily small, the proof of Theorem \ref{ilo} is complete.

\vskip2mm

To prove Theorem \ref{ilo-sparse}, we just use the sparse version of all lemmas used in the previous proof, except that we take $k$ equal to $\sqrt{m/\mu}$ rather than $n^{1-\eps}$. The starting point is
$$ \P_{\mu} (\V) \gg p.$$

Instead of $\D(\GAP((w_1, \ldots, w_r),k)) $, we will consider  $\D(\GAP((w_1, \ldots, w_r), \sqrt \mu k)) $. Thus, the gain from  Lemma \ref{lemma:weakz} is no longer $k^{-1}$ (which used to lead to the term $n^{-1/2+ O(\eps)}$ in the final bound), but instead $(k \sqrt \mu)^{-1}$ (which leads to the term $n^{O(\eps)} (\sqrt m)^{-1}$). Meanwhile, the $\exp(o(n))$ factor is replaced with $\exp( n^{O(\eps)} k^2) = \exp(n^{O(\eps)} m / \mu)$. The reader is invited to work out the simple details.

\section{Proof of Theorems \ref{lsv} and \ref{lsv-sparse}} \label{lsv-sec} 

Theorem \ref{lsv} follows from the following.
Let $\sigma_{n} (M)$ denote the least singular value of a  matrix $M$ of order $n$.
We shall abbreviate $N = N_{n,\rho}$.

\begin{theorem}\label{lsvprecise} Let $\gamma \geq 0$ be such that
\begin{itemize}
\item  $\|M +N\| \le n^{\gamma}$ with probability one.
\item $|\a_{1}| +\dots + |\a_{n}| \le n^{\gamma}$ with probability one.
\end{itemize}
Then for any $A, B \geq 0$ with
\begin{equation}\label{B-bound}
B > 2\gamma A + 3\gamma + 1/2
\end{equation}
we have
$$\P( \sigma_{n }(M + N) \le n^{-B} ) \ll_{A,B,\gamma,\kappa} n^{-A}.$$
\end{theorem}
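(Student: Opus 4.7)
The plan is a column-distance reduction combined with a net argument driven by the inverse Littlewood--Offord theorem (Theorem \ref{ilo}). Let $c_1,\ldots,c_n$ denote the columns of $A := M + N$ and let $d_j$ be the (Hermitian) distance from $c_j$ to the span of the remaining columns. Since the $j$-th row of $A^{-1}$ has Euclidean norm $1/d_j$, we obtain $\sigma_n(A)^{-2} \leq \|A^{-1}\|_F^2 = \sum_j d_j^{-2} \leq n/\min_j d_j^2$, so $\sigma_n(A) \leq n^{-B}$ forces $\min_j d_j \leq n^{1/2-B}$. By symmetry and a union bound over $j$, it suffices to show $\P(d_n \leq n^{1/2-B}) \ll_{A,B,\gamma,\kappa} n^{-A-1}$. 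Conditioning on $c_1,\ldots,c_{n-1}$, pick a measurably defined unit vector $\v \in \C^n$ satisfying $\v \cdot c_i := \sum_k v_k (c_i)_k = 0$ for each $i < n$; then $d_n = |\v \cdot c_n|$. Writing $c_n = M^{(n)} + N^{(n)}$ with $N^{(n)}$ an independent iid $\a$-copy, the random part of $\v \cdot c_n$ is $W_\a(\v)$, whence
$$\P(d_n \leq \beta \mid \v) \leq p_{\beta,\a}(\v), \qquad \beta := n^{1/2-B}.$$

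Set $p_0 := \tfrac{1}{2} n^{-A-1}$ and $S := S_{n,\a,\beta,p_0}$. By definition of $S$,
$$\P(d_n \leq \beta) \leq p_0 + \P(\v \in S),$$
so the task reduces to bounding $\P(\v \in S) \leq \tfrac{1}{2} n^{-A-1}$. Apply Theorem \ref{ilo} with a small parameter $\eps$: $S$ admits a $\beta$-net $S'$ in the $l^\infty$ norm of cardinality
$$\#S' \leq n^{(-1/2+\eps)n} p_0^{-n} + e^{o(n)} \ll n^{(A+1/2+\eps)n + O(1)}.$$
If $\v \in S$, choose $\v' \in S'$ with $\|\v - \v'\|_\infty \leq \beta$; then $\|\v - \v'\|_2 \leq \sqrt n\,\beta$, and since $\v \cdot c_i = 0$ and $\|c_i\|_2 \leq \|A\| \leq n^\gamma$, Cauchy--Schwarz gives $|\v' \cdot c_i| \leq \tilde\beta := n^{1+\gamma-B}$ for every $i<n$. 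Independence of the columns $c_1,\ldots,c_{n-1}$ then yields
$$\P(\v \in S) \leq \#S' \cdot \max_{\v' \in S'} \prod_{i=1}^{n-1}\P\bigl(|\v' \cdot c_i| \leq \tilde\beta\bigr) \leq \#S' \cdot \max_{\v' \in S'} p_{\tilde\beta,\a}(\v')^{n-1}.$$

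The main obstacle is bounding the per-column small-ball probability $p_{\tilde\beta,\a}(\v')$ for $\v' \in S'$ sharply enough to beat the $n^{\Omega(n)}$ cardinality of the net: one needs an estimate of order roughly $n^{-(A+1/2)-\eps'}$. Since an element of $S'$ can itself be highly structured, a naive Littlewood--Offord bound is not adequate. My plan is to combine Lemma \ref{concball} (which majorises $p_{\tilde\beta,\a}(\v')$ by $e^{\pi\tilde\beta^2}\P_1(\v')$), the Fourier representation of $\P_1$ in Lemma \ref{pf2}, and the $\a$-norm lower bound $\|z\|_\a \gg |\Re z|$ from Lemma \ref{panorm}(iii) to reduce to a Gaussian integral controlled by the dispersion $\D(Q)$ of a generalized arithmetic progression attached to $\v'$ via its membership in the net. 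The forward Littlewood--Offord estimate (Theorem \ref{flot}) then supplies $\P_1(\v') \ll \D(Q)^{-1+\eps}$, and a careful accounting of how the $n^\gamma$-factors in $\tilde\beta$ (raised to the $(n-1)$th power) must be absorbed by the $p_0^{-n}$ factor in $\#S'$ produces exactly the threshold $B > 2\gamma A + 3\gamma + 1/2$.
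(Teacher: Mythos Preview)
Your setup---reducing to a column distance, then splitting according to whether the normal vector $\v$ has small-ball probability below or above the threshold $p_0$---is sound and corresponds to the paper's poor/rich decomposition. The genuine gap is in the net argument for the ``rich'' event $\{\v \in S\}$. Your union bound
\[
\P(\v \in S) \ \leq\ \#S' \cdot \max_{\v' \in S'} p_{\tilde\beta,\a}(\v')^{\,n-1}
\]
requires $p_{\tilde\beta,\a}(\v') \lesssim n^{-(A+1/2)-\eps'}$ uniformly in $\v' \in S'$. But $S'$ is a net for \emph{all} of $S = \{\v : p_{\beta,\a}(\v) \geq p_0\}$, a set that contains, for instance, the standard basis vectors, for which $p_{\tilde\beta,\a}(\v') = \Theta(1)$. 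So the maximum over $S'$ is $\Theta(1)$, and the bound reads $\P(\v\in S) \leq n^{(A+1/2+\eps)n}\cdot \Theta(1)^{n-1}$, which is useless. Your proposed rescue via Theorem~\ref{flot} cannot work here: that theorem bounds $\P_\mu$ for very specific structured tuples $v_1^{L_1^2}\cdots v_r^{L_r^2}$, not $\P_1(\v')$ for an arbitrary net point; and in any case, for a basis vector the relevant dispersion $\D(Q)$ is $O(1)$.

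The paper closes this gap with two devices you are missing. First, it stratifies the rich set by dyadic level of the small-ball probability, $p_{\beta,\a}(\v) \in [n^{-k\eps}, n^{-(k-1)\eps})$; within level $k$ the net has size $\ll n^{(-1/2+\eps)n}(n^{-k\eps})^{-n}$ while the per-row probability is $\ll n^{\delta+\eps}\cdot n^{-k\eps}$, and the $n^{-k\eps}$ factors cancel. Second, a pigeonhole in \emph{scale} (the index $j$) guarantees that passing from the net scale $\beta$ to the test scale $\tilde\beta$ costs only a factor $n^\delta$ with $\delta<1/2$, rather than an uncontrolled amount; this is where the assumption $B > 2\gamma A + 3\gamma + 1/2$ actually enters, producing enough pigeonholes $J \approx 2A+3$ so that $\delta = (A+1)/J < 1/2$. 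Without these two refinements the net argument does not close.
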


Indeed, as $\a$ has finite second moment, we can assume that $|\a| \le n^{A+10}$, at the cost of a (negligible) additional term $o(n^{-A})$ in probability. Thus, by restricting $\a$ to the event $|\a| \le n^{A+10}$ and using the assumption about $M$ in Theorem \ref{lsv}, we can satisfy both assumptions in Theorem \ref{lsvprecise}, for $\gamma$ large enough.

\begin{remark}
We can have a more efficient form of the theorem by bounding the
probability that the two assumptions on $\| M +N\|$ and
$|\a_{1}|+\dots + |\a_{n}|$ fail (rather than assuming that they
hold with probability one). The relation between $B$ and $\gamma, A$
can be strengthened and we will do that in another paper.
\end{remark}

We now prove Theorem \ref{lsvprecise}.  We suppress all dependence of the implied constants
on $A,\gamma,B,\kappa$.

Let us call a unit vector $\v = (v_1,\ldots,v_n)$ \emph{poor} if we
have
$$ p_{n^{-B+1/2},\a}(\v) \leq n^{-A-1},$$
and \emph{rich} otherwise. Theorem \ref{lsvprecise} follows directly from the following
two lemmas and the fact that
$$\sigma_{n } (M) = \inf _{|\v|=1} | M\v| . $$

\begin{lemma}[Poor vectors are negligible]  \label{lemma:poor} We have
$$ \P( \| (M+N) \v \| \leq n^{-B} \hbox{ for some poor unit vector } \v ) \ll n^{-A}.$$
\end{lemma}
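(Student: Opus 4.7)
The plan is a standard $\varepsilon$-net argument that combines the defining property of ``poor'' vectors with the row-independence of $M+N$.

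First I will fix a single poor unit vector $\v$ and derive an exceedingly strong single-vector bound. If $\|(M+N)\v\| \le n^{-B}$ then every row $R_i := M_i + N_i$ of $M+N$ must satisfy $|R_i\cdot\v| \le n^{-B} \le n^{-B+1/2}$. The rows $R_1,\ldots,R_n$ are mutually independent (the rows of $N$ are i.i.d.\ and $M$ is deterministic), and writing $R_i \cdot \v = M_i\cdot\v + W^{(i)}_{\a}(\v)$ for an appropriate independent copy $W^{(i)}_\a(\v)$ of the random walk of Definition \ref{sbp}, the poorness of $\v$ yields
$$\P(|R_i\cdot\v| \le n^{-B}) \le p_{n^{-B+1/2},\a}(\v) \le n^{-A-1}.$$
Multiplying across the $n$ independent rows produces the single-vector estimate
$$\P(\|(M+N)\v\|\le n^{-B}) \le n^{-(A+1)n}.$$

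Next I will pass from a single $\v$ to the whole unit sphere via a $\delta$-net $\mathcal N$ of the complex unit sphere in the $\ell^\infty$ metric, with $\delta$ chosen as a suitable negative power of $n$; by standard volume estimates $|\mathcal N| \le (3/\delta)^{2n}$. If the bad event occurs at some poor $\v$, let $\v'\in \mathcal N$ be the nearest net point. Two perturbation estimates, one for each hypothesis of the theorem, then transfer the relevant properties of $\v$ to $\v'$. The spectral-norm bound $\|M+N\|\le n^\gamma$ gives $\|(M+N)\v'\|\le n^{-B}+n^{\gamma+1/2}\delta$, and the row $\ell^1$-bound $|\a_1|+\cdots+|\a_n|\le n^\gamma$ gives $|W_\a(\v-\v')|\le n^\gamma\delta$, whence $p_{r,\a}(\v')\le p_{r+n^\gamma\delta,\a}(\v)$ for every $r > 0$. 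For $\delta$ chosen suitably small, $\v'$ is ``approximately poor'' and $\|(M+N)\v'\|$ is bounded by a mild enlargement of $n^{-B}$, so that the single-vector estimate of the previous paragraph applies to $\v'$ (with slightly worse constants that can be absorbed into $n^{-A-1}$).

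Finally I apply the union bound:
$$\P(\text{bad event}) \le |\mathcal N|\cdot n^{-(A+1)n} \le (3/\delta)^{2n}\cdot n^{-(A+1)n}.$$
The quantitative hypothesis $B > 2\gamma A + 3\gamma + 1/2$ is set up precisely so that the optimal choice of $\delta$ (balancing the two factors of $n^\gamma$ in the perturbation estimates against the single-vector savings) makes the right-hand side decay faster than $n^{-A}$.

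The main obstacle is exactly this final balance. Each factor of $n^\gamma$ -- one from the operator norm of $M+N$, the other from the $\ell^1$-bound on a row of $N$ -- forces $\delta$ to be smaller and hence $|\mathcal N|$ larger, while the passage between the radii $n^{-B}$ and $n^{-B+1/2}$ must be absorbed cleanly. The inequality \eqref{B-bound} is the bookkeeping of these two $n^\gamma$ factors together with the polynomial slack between these two radii; getting all the exponents to line up is the only non-routine part of the argument.
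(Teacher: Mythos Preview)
Your $\varepsilon$-net argument has a genuine quantitative gap: the exponents go the wrong way. To transfer both the small-norm condition and the poorness condition from $\v$ to a net point $\v'$, you need the mesh $\delta$ to satisfy roughly $n^{\gamma}\delta \lesssim n^{-B+1/2}$, i.e.\ $\delta \lesssim n^{-B+1/2-\gamma}$. This forces the net to have cardinality at least $(C/\delta)^{2n} \gtrsim n^{(2B-1+2\gamma)n}$. Your single-vector bound is $n^{-(A+1)n}$. For the union bound to yield anything useful you would need $A+1 > 2B-1+2\gamma$, i.e.\ $B < \tfrac{A}{2}+1-\gamma$. But the standing hypothesis \eqref{B-bound} says $B > 2\gamma A + 3\gamma + \tfrac12$, which forces $B$ to be \emph{large} compared with $A$ (in the application one truncates $|\a|\le n^{A+10}$, so $\gamma$ is itself of order $A$). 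Thus the net is exponentially larger than the per-vector savings and the union bound diverges. The inequality \eqref{B-bound} is not ``set up precisely'' for this balance; it points in exactly the opposite direction.

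The underlying reason is that poor vectors are completely unstructured: there is no net for them smaller than a net of the whole sphere, and a sphere net at scale $n^{-B}$ is far too large when $B$ is big. The paper therefore treats poor vectors by an entirely different device that uses no net and never invokes \eqref{B-bound}. One passes to the adjoint to obtain a left unit vector $\w$ with $\|\w^\dagger(M+N)\|\le n^{-B}$, pays a factor $n$ to assume $|w_n|\ge n^{-1/2}$, and then conditions on the first $n-1$ rows $X_1,\ldots,X_{n-1}$. On this conditioning the bad event fixes a \emph{deterministic} poor unit vector $\u$ with $\sum_{i<n}|X_i\cdot\u|^2\le n^{-2B}$. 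Taking the inner product of $\sum_i w_i X_i$ with $\u$ and using Cauchy--Schwarz gives $|X_n\cdot\u|\le 2n^{-B+1/2}$; since $X_n$ is independent of the conditioning and $\u$ is poor, this event has probability at most $n^{-A-1}$, and the factor $n$ gives the claimed $n^{-A}$. Only one row of independence is spent, which is why the argument is insensitive to the size of $B$. Your $\varepsilon$-net idea is essentially the right one for the \emph{rich} vectors in Lemma~\ref{lemma:rich}, where Theorem~\ref{ilo} supplies a net of size $\ll n^{(-1/2+\eps)n}p^{-n}$, far smaller than a sphere net; that is where \eqref{B-bound} is actually consumed.
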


\begin{lemma}[Rich vectors are negligible]  \label{lemma:rich} We have
$$ \P( \| (M+N) \v \| \leq n^{-B} \hbox{ for some rich unit vector } \v ) \ll n^{-A}.$$
\end{lemma}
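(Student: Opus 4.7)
The plan is to combine the small entropy of the rich set, provided by the inverse Littlewood--Offord theorem, with a union bound that exploits the independence of the rows of $N$. First I would apply Theorem~\ref{ilo} with $\beta := n^{-B+1/2}$ and $p := n^{-A-1}$, so that the set of rich unit vectors is exactly $S_{n,\a,\beta,p}$; this produces an $l^\infty$ $\beta$-net $S'$ of the rich set of size at most $n^{(A+1/2+\eps)n} + \exp(o(n))$, for any fixed small $\eps > 0$.

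Given a rich unit vector $\v$ with $\|(M+N)\v\|_2 \leq n^{-B}$, I would pick $\v' \in S'$ with $\|\v-\v'\|_\infty \leq \beta$ and transfer the event to $\v'$. The hypothesis $\|M+N\| \leq n^\gamma$ combined with the almost sure row-sum bound $\sum_j |\a_j| \leq n^\gamma$ yields the row-wise $l^1$ bound $\sum_j |(M+N)_{ij}| = O(n^{\gamma+1/2})$, hence $\|(M+N)\v'\|_\infty \leq r$ with $r := 2 n^{\gamma+1-B}$. Since the rows of $N$ are independent, the union bound becomes
\[
 \P(\exists \text{ rich } \v, \|(M+N)\v\|_2 \leq n^{-B}) \leq \sum_{\v' \in S'} p_{r,\a}(\v')^n.
\]

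To bound the per-point probability I would dyadically decompose $S'$ according to the level $p_{\beta,\a}(\v') \in (n^{-k-1}, n^{-k}]$. At each level $k$, a second application of Theorem~\ref{ilo} with $p = n^{-k-1}$ bounds the cardinality of the corresponding sub-net by $n^{(k+1/2+\eps)n} + \exp(o(n))$, while covering a ball of radius $r$ by $O((r/\beta)^2) = O(n^{2\gamma+1})$ balls of radius $\beta$ gives $p_{r,\a}(\v') = O(n^{2\gamma+1-k})$. Summing the resulting contributions over $k$ and invoking the hypothesis $B > 2\gamma A + 3\gamma + 1/2$ to balance the competing factors should yield the desired bound $\ll n^{-A}$.

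The hard part will be the last balance step. The naive row-wise $l^\infty$ transfer loses a factor of $n^{\gamma+1/2}$ in the per-point ball radius, which (after squaring for the real dimension of the complex plane in the ball-covering step) accounts for the $2\gamma A$ term in the hypothesis on $B$. Closing the estimate will likely require either refining the $l^\infty$ net at the smaller scale $n^{-B-\gamma-1/2}$ (paying a ball-covering cost that is compensated by the dyadic savings from re-applying Theorem~\ref{ilo} at each level) or exploiting the full $l^2$ structure of $\|(M+N)\v\|_2 \leq n^{-B}$ rather than the weaker row-wise bound.
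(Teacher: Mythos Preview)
Your outline matches the paper's in spirit (net via Theorem~\ref{ilo}, union bound over net points, dyadic decomposition in the small-ball probability), but the balance in your final step does not close, and this is a genuine gap rather than a technicality. At level $k$ your net has size at most $n^{(k+1/2+\eps)n} + \exp(o(n))$, while your ball-covering bound gives per-point probability at most $\bigl(O(n^{2\gamma+1-k})\bigr)^n$. The product is of order $n^{(2\gamma+3/2+\eps)n}$, which diverges; note that $B$ does not even appear in this expression, so the hypothesis $B > 2\gamma A + 3\gamma + 1/2$ cannot rescue it. Neither of your suggested fixes helps: refining the net to a smaller scale $\beta'$ does not change the ratio between the per-row radius $r$ (governed by the transfer through $\|M+N\|$) and the net radius $\beta'$, and it is that ratio $(r/\beta')^2$, raised to the $n$th power, that destroys the estimate.

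The missing idea is a pigeonhole \emph{in the radius}. For each rich $\v$ the paper considers the sequence $p_{n^{-B+Cj+1/2},\a}(\v)$ for $j=0,1,\ldots,J$, where $C=\gamma+2\eps$ and $J$ is the least integer exceeding $2A+2$; since these numbers climb from at least $n^{-A-1}$ to at most $1$, some consecutive pair obeys $p_{n^{-B+C(j+1)+1/2},\a}(\v) \leq n^{\delta}\, p_{n^{-B+Cj+1/2},\a}(\v)$ with $\delta=(A+1)/J<1/2$. One then takes the net at scale $\beta_j = n^{-B+Cj+1/2}$ and uses $\sum_i |\a_i| \leq n^\gamma$ to transfer the per-row event on $\v'$ back to $\v$ at the larger radius $n^{-B+C(j+1)+1/2}$. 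The pigeonhole guarantees the per-row probability is at most $n^{\delta+\eps} p$, so the product of net size and per-point probability is $n^{(-1/2+\delta+O(\eps))n}$, which is exponentially small precisely because $\delta<1/2$. The role of the hypothesis on $B$ is to ensure $B > JC + 1/2$, so that all the radii $n^{-B+Cj+1/2}$ stay below $1$ and the pigeonhole ladder fits.
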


\begin{proof}  (Proof of Lemma \ref{lemma:poor})
We repeat the argument from \cite{TVstoc}. Let $E$ be the event that
$\|(M+N) v \| \leq n^{-B}$ for some poor unit vector $\v$. If $E$
holds, then the least singular value of $M+N$ is at most $n^{-B}$,
and so the same is true for the adjoint $(M+N)^\dagger$.  Thus there
exists a row vector $\w^\dagger$ such that
\begin{equation}\label{wdag}
 \| \w^\dagger (M+N) \| \leq n^{-B}.
\end{equation}
Write $\w^\dagger = (w_1,\ldots,w_n)$.  By paying a factor of $n$
and using symmetry we may assume that the last coefficient of
$\w^\dagger$ has the largest magnitude, thus
\begin{equation}\label{wbig}
|w_n| \geq |w_i| \hbox{ for all } i.
\end{equation}
In particular, we have
\begin{equation}\label{wnbig}
|w_n| \geq 1/\sqrt{n}.
\end{equation}
Thus, if we let $F$ be the event that there exists a unit vector
$\w$ obeying both \eqref{wdag} and \eqref{wbig}, we have
\begin{equation}\label{pef}
\P( E) \leq n \P( E \wedge F ).
\end{equation}
Let $X_1,\ldots,X_n$ be the rows of $M+N$.  We shall condition on the first $n-1$ rows $X_1,\ldots,X_{n-1}$.  Observe that if $E$ holds,
then there exists a poor unit vector $\v$ such that
$$ (\sum_{i=1}^n |X_i \cdot \v|^2)^{1/2} = \| (M+N) \v \| \leq n^{-B}.$$
Thus, if $\P(E|X_1,\ldots,X_{n-1})$ is non-zero, then there exists a
poor unit vector $\u$ such that
\begin{equation}\label{xu}
 (\sum_{i=1}^{n-1} |X_i \cdot \u|^2)^{1/2} \leq n^{-B}.
\end{equation}
On the other hand, if $F$ holds, and $\w^\dagger = (w_1,\ldots,w_n)$
is as above, then by \eqref{wdag}
$$ \| w_1 X_1 + \ldots + w_n X_n \| \leq n^{-B};$$
taking inner products with the unit vector $\u$ and using the
triangle inequality, we conclude
$$ |w_n| |X_n \cdot \u| \leq \sum_{i=1}^{n-1} |w_i| |X_i \cdot \u| + n^{-B}.$$
Using \eqref{wnbig}, Cauchy-Schwarz, and \eqref{xu} we conclude
$$ |X_n \cdot \u| \leq n^{-B+1/2} + n^{-B+1/2} = 2 n^{-B+1/2}.$$
On the other hand, since $\u$ is poor, and $X_n$ is independent of
$X_1,\ldots,X_{n-1}$ (and hence independent of $\u$ also), we have
$$ \P( |X_n \cdot \u| \leq 2 n^{-B+1/2} | X_1,\ldots,X_{n-1} ) \leq n^{-A-1}.$$
Putting all this together, we conclude that
$$ \P( E \wedge F | X_1,\ldots,X_{n-1} ) \leq n^{-A-1}$$
uniformly in the choice of $X_1,\ldots,X_{n-1}$. Integrating over
$X_1,\ldots,X_{n-1}$ and using \eqref{pef} we obtain
$\P(E) \leq n^{-A}$, as desired.
\end{proof}

\begin{proof} (Proof of Lemma \ref{lemma:rich})
Let $\eps > 0$ be a sufficiently small constant (in particular, smaller than the constant in Theorem \ref{ilo}); we allow all implied constants to depend on $\eps$.  We may also assume that $n$ is sufficiently large depending on $\eps$.

Let $J$ be the smallest integer strictly larger than
$2A+2$, thus $2A+2 < J \leq 2A+3$. Thus, if we set $\delta := (A+1)/J$, then (using \eqref{B-bound}) we have $0 < \delta < 1/2$ and $B > J \gamma$.  If $\eps$ is sufficiently small, we thus have
\begin{equation} \label{smalldelta}
 B > JC + 1/2 \hbox{ and } \delta + 3\eps  < 1/2
\end{equation}
where
\begin{equation}\label{C-def}
C  := \gamma + 2 \eps.
\end{equation}

Let $\v$ be a rich unit vector.  For $j=0,1,\ldots, J$, consider the
quantities
$$ p_{n^{-B+Cj+1/2},\a}(\v).$$

These quantities are increasing in $j$, and range between $n^{-A-1}$
and $1$ since $\v$ is rich.  Applying the pigeonhole principle and
using the definition of $\delta$, we can thus find a positive $0 \le
j \le J-1$  such that
$$  p_{n^{-B+C(j+1)+1/2},\a}(\v) \leq n^{\delta} p_{n^{-B+Cj+1/2},\a}(\v).$$

Define, for any $0\le j \le J-1$ and $1\le k \le \lceil (A+1)/\eps \rceil$, the set $\Omega_{j,k} $ as
$$ \Omega_{j,k} := \{\v| (\v \,\, \hbox{\rm rich})\,\, \wedge
(p_{n^{-B+C(j+1)+1/2},\a}(\v) \leq n^{\delta}
p_{n^{-B+Cj+1/2},\a}(\v)) \wedge (p_{n^{-B+Cj+1/2},\a}(\v) \in
[n^{-k\eps}, n^{-(k-1)\eps})  ) \} .$$

Since the number of pairs $j,k$ is $O(1)$, it suffices by the union bound to show that for each fixed $j,k$
\begin{equation}\label{puma}
 \P( \| (M+N) \v \| \leq n^{-B} \hbox{ for some  unit vector } \v \in \Omega_{j,k} ) =o( n^{-A}).
 \end{equation}

In fact, we are going to show that this probability is exponentially small.

Let $p:= n^{-k\eps}$.  In the notation of Theorem \ref{ilo}, $\v$
lies in $S_{n,\a,n^{-B+Cj+1/2},p}$. Thus by this theorem, there is a
set $V$ of cardinality at most
$$ \# V \ll  n^{-n/2 +  \eps n} p^{-n} + \exp(o(n))$$
\noindent such that for each $\v \in \Omega_{j,k}$ there is $\v' \in
V$ such that $\| \v-\v'\| _{\infty} \le n^{-B+Cj+1/2}$.

Consider $v \in \Omega_{{j,k}}$ and $\v' \in V$ as above. Recall
that $\| M+N \| \leq n^\gamma$ almost surely. Thus with probability
$1$ we have
$$ \| (M+N) (\v-\v') \| \leq n^{-B+Cj + 1 + \gamma}.$$

By the triangle inequality, we have
$$ \| (M+N) \v' \| \leq 2n^{-B+Cj + 1 +\gamma} .$$

As usual, let $X_{i}$ be the $i$th row of $M+N$.
It follows that  there are at least $n':= n - n^{1-\eps}$ coordinates $1\le i \le n$ such that
$$ |X_{i} \cdot \v' |  \le n^{-B+ Cj + 1/2 + \gamma+ \eps}. $$

Now we relate the probability that $ |X \cdot \v' |  \le n^{-B+ Cj +
1/2 + \gamma+ \eps}$ with $p$, where $X:= (\a_{1}, \dots, \a_{n})$.
Consider the quantity

$$|X\cdot \v - X \cdot \v' | = |(v_{1}-v'_{1}) \a_{1 } +  \cdots \cdot (v_{n} -v'_{n})\a_{n}|.$$
\noindent Notice that  $|v_{i}-v_{i}'| \le n^{-B+Cj+1/2}$ and also
$\sum_{i} |\a_{i}| \le n^{\gamma}$ with probability one. Thus
$$|X\cdot \v - X \cdot \v' |  \le n^{-B+Cj+1/2+\gamma} $$
\noindent which implies, through the triangle inequality, that
$$|X \cdot \v|  \le n^{-B+Cj+1/2 +\gamma}+ n^{-B+ Cj+1/2  + \gamma+ \eps} \le n^{-B + C(j+1)+1/2},$$

\noindent where in the last inequality we used \eqref{C-def}.

We can then conclude that
$$ \P(|X_{i} \cdot \v' |  \le n^{-B+ Cj + 1/2 + \gamma+ \eps}) \leq p_{n^{-B+C(j+1)+1/2}, \a}(\v)
\leq n^{\delta} p_{n^{-B+Cj+1/2},\a}(\v) \leq  n^{\delta+\eps} p,$$

\noindent where in the last inequality we used the definition of $\Omega_{j,k}$.

Also, a very crude second moment argument, using the fact that $\a$ has $\kappa$-controlled
second moment, gives
\begin{equation}\label{crude}
p_{n^{-B+C(j+1)+1/2},\a}(\v) \leq 1-\delta'
\end{equation}
if $\delta' > 0$ is small enough depending on $\kappa$.  Thus
$$\P(|X_{i} \cdot \v' |  \le n^{-B+ Cj + 1/2 + \gamma+ \eps})  \leq
\min(  n^{\delta+\eps} p, 1-\delta' ).$$

By the union bound, we thus have
$$ \P\left( \| (M+N) \v' \| \leq n^{-B+Cj +1 + \gamma + \eps} \right) \leq
\min( n^{\delta+\eps} p, 1-\delta')^{n'} \binom{n}{n'}. $$
Again by the union bound, the left-hand side of \eqref{puma} is at most
\begin{eqnarray*} & & \left(n^{-n/2 + \eps n } p^{-n} + \exp(o(n))\right)
\min\left( n^{\delta+ \eps} p,1-\delta'\right)^{n'} \binom{n}{n'}  \\
& \ll& n^{-n/2 + \eps n } p^{-n} (n^{\delta+ \eps} p)^{n'} \binom{n}{n'}
+ \exp(o(n)) (1-\delta')^{n'} \binom{n}{n'}. \end{eqnarray*}

It is routine to verify that the last  quantity  is $o(n^{-A})$
(indeed, we obtain a bound of the form $O(\exp( - \sigma n))$ for some $\sigma > 0$).
Our proof is complete.
 \end{proof}

\subsection{The sparse case}

Now we sketch the proof of Theorem \ref{lsv-sparse}.  We repeat the
above arguments with the following changes.  We will of course
replace Theorem \ref{ilo} by Theorem \ref{ilo-sparse}, with $\mu :=
\rho$. Due to the presence of the additional factors of $\mu$ in
that theorem, we can no longer afford to choose $\delta$ close to
$1/2$, so we instead choose $\delta$ to be very small, say $\delta =
\eps$, where $\eps$ is very small compared to $1-\alpha$. In order
to take $\delta$ this small, we will need $B$ to be much larger than
what \eqref{B-bound} requires, but this is not a problem. For our
applications, all we need is that $B$ does not depend on $n$.

The treatment of the poor vectors (Lemma \ref{lemma:poor}) in the
sparse case is the same as in the non-sparse case.  The treatment of
the rich vectors (Lemma \ref{lemma:rich}) is also essentially the
same, except for the fact that we no longer have \eqref{crude}. To
be more precise, $1-\delta'$ needs to be replaced by $1-\delta'
\rho$.  In the cases when $k$ is larger than some absolute constant
(say 5), \eqref{crude} is not needed, since in this case $p$ is
sufficiently small and

$$\min (n^{\delta +\eps} p, 1 -\delta' \rho) =\min (n^{2\eps} p, 1 -\delta' \rho)
= n^{2\eps} p $$

and the above argument goes through without difficulty, so long as
one applies Theorem \ref{ilo-sparse} with $m := n^{C_0\eps}$ for
some sufficiently large absolute constant $C_0$.

In the remaining case where $k$ is at most 5, the replacement of
$1-\delta'$ by  $1-\delta' \rho$ becomes too expensive and we will
avoid it by a rescaling argument, using the pigeonhole principle.

 To start, notice that from the
definition of $\Omega_{j,k}$ and the fact $k \le 5$, we have
\begin{equation}\label{pnv}
p_{n^{-B'},\a \I_\rho}(\v) \ge n^{-5 \eps}
\end{equation}
for some fixed $B - O_\eps(1) \leq B' \leq B$.  Since the left-hand
side is $p_{1,\a \I_\rho}(n^{B'} \v)$, we also see from Lemma
\ref{concball-sparse} that

$$ \P_\rho( n^{B'} \v ) \gg n^{-4\eps}.$$

We observe that this implies that $\v$ is ``compressed'' in the
sense that at most $n^{100\eps}/\rho$ of the coefficients of $v =
(v_1,\ldots,v_n)$ can exceed $n^{-B'+10}$ in magnitude. (Of course,
instead of $100$, on can use any large constant.) Indeed, if instead
we had at least $n^{100\eps}/\rho$ coefficients $v_i$ of magnitude
at least $n^{-B'+10}$ for some large absolute constant $A$, we see
from Lemma \ref{pumar-lemma} that

$$ \P_\rho( (n^{B'} v_i)^{n^{A\eps}/\rho} ) \gg n^{-4\eps}$$
for one of these $v_i$, but one can show that this is not the case
by a direct computation using the $\kappa$-controlled second moment
hypothesis, or else by an appeal to Theorem \ref{flot}. (Here we
used the
 notation of Lemma \ref{pumar-lemma}: $(z)^s$ denotes a vector of
 length $s$ whose every coordinate equals $z$.)

We have just seen that
$$ |\{ 1 \leq i \leq n: |v_i| \geq n^{-B'+10} \}| \le n^{100\eps}/\rho.$$

Next, we apply the pigeonhole principle to conclude the existence of a $B''$ with
$B' - O_\eps(1) \leq B'' \leq B'-10$ and an integer $m = O_{\eps,\gamma}(1)$ with
\begin{equation}\label{nmeps}
n^{m\eps} \le n^{100\eps}/\rho
\end{equation}
such that
$$ n^{(m-1)\eps } \leq |\{ 1 \leq i \leq n: |v_i| \geq n^{-B''+10+\gamma} \}| \leq
|\{ 1 \leq i \leq n: |v_i| \geq n^{-B''} \}| \leq n^{m\eps}.$$


By paying a factor of $O_{\eps,\gamma}(1)$ in our final probability
bound we may fix $B''$ and $m$.  If we define the vector $\w$ by
setting $w_i$ to be the nearest (Gaussian integer) multiple of
$n^{-B''+1}$ to $v_i$, we see that $w_i$ is non-zero for at most
$n^{m\eps}$ coordinates $i$, and has magnitude $\gg
n^{-B''+10+\gamma}$ for at least $n^{(m-1)\eps}$ of these
coordinates.  Also, if $\|(M+N) \v\| \leq n^{-B}$, we see from the
triangle inequality and crude computations that $\|(M+N) \w \| \leq
n^{-B'' + 5 + \gamma}$ (say), recalling that $B'' < B+10$.

On the other hand, note that if we let $\I_{i,\rho}$ be independent
samples of $\I_\rho$, then with probability $\Omega (n^{(m-1)\eps}
\rho)$, there is at least one $i$ with $\I_{i,\rho}=1$ and $|w_i|
=\Omega(n^{-B''+10+\gamma})$.  From this we conclude that
$$ p_{n^{-B''+7+\gamma},\a \I_\rho} \geq 1 - \delta' n^{(m-1)\eps} \rho$$
for some absolute constant $\delta' > 0$ (cf. \eqref{crude}), and
thus for each fixed $\w$ we have
$$ \P( \|(M+N) \w \| \leq n^{-B'' + 5 + \gamma} ) \ll \exp( - \Omega( n^{(m-1)\eps} \rho n ) ).$$
On the other hand, a direct counting argument shows that the number
of possible $\w$ is at most $\exp( O( n^{(m+1)\eps} ) )$. Recall
that $\rho \ge n^{-1+\alpha}$ and $\alpha$ is much larger than
$\eps$. It follows that

$$n^{(m-1)\eps} \rho n \gg n^{(m+1)\eps} $$

for any $m$.  Applying the union bound we obtain a suitably small
contribution to the sparse analogue of Lemma \ref{lemma:rich}, as
required.

\section{Proof of the circular law}\label{circular-sec} 

We now use Theorem \ref{lsv} to derive Theorem \ref{circular}.

By Lemma \ref{rot} and rotating $\a$ by a constant phase if necessary\footnote{Here of course we use the obvious fact that the circular law is invariant under phase rotation of the underlying random variable $\a$.}, we may assume that $\a$ has $\kappa$-controlled second moment for some $\kappa$.  Allowing implied constants to depend on this $\kappa$, we thus have that $\a$ has $O(1)$-controlled second moment, which will allow us to apply Theorem \ref{lsv} later.

We closely follow the (now standard) arguments\footnote{One could also follow the approach of G\"otze and Tikhomirov \cite{gotze}, as was done in \cite{PZ}.} in \cite[Chapter 10]{bai} (which are in turn based on the earlier work of Girko \cite{girko} and Bai \cite{bai-first}), which we briefly review here.

Let $c_n: \R \times \R \to \C$ be the characteristic function
$$ c_n(u,v) := \int_\R \int_\R e^{\sqrt{-1} ux + \sqrt{-1} vy} \mu_n(dx, dy)$$
of the ESD $\mu$, and similarly define
$$ c(u,v) := \int_\R \int_\R e^{\sqrt{-1} ux + \sqrt{-1} vy} \mu_\infty(dx, dy)$$
of the uniform measure $\mu_\infty$ on the disk.
The sequence of empirical measures $\mu_n$ can be shown to be a.s. tight just from the assumption that $\a$ has finite second moment (see \cite[Lemma 10.5]{bai} and \cite[Theorem 3.6]{bai}, and also the discussion in \cite[p. 295]{bai}), and so by standard arguments it suffices to show, for almost every $u, v$, that
\begin{equation}\label{cnu}
c_n(u,v) \to c(u,v)
\end{equation}
almost surely.

Henceforth we fix $u, v$.  We can take $uv \neq 0$ since we only need the claim for almost every $u,v$.  From \cite[Lemma 10.2]{bai}, we have the Stieltjes transform identity (first observed by Girko \cite{girko})
\begin{align*}
c_n(u,v) &=
- \frac{u^2 + v^2}{2\sqrt{-1} u \pi} \int_\R \int_\R \frac{1}{n} \sum_{k=1}^n \frac{\Re(\lambda_k - z)}{|\lambda_k - z|^2} e^{\sqrt{-1}us + \sqrt{-1}vt}\ ds dt \\
&=\frac{u^2 + v^2}{2\sqrt{-1} u \pi} \int_\R \int_\R \frac{d}{ds} \log |\det(\frac{1}{\sqrt{n}} N_n - z I_n)| e^{\sqrt{-1}us + \sqrt{-1}vt}\ ds dt \\
&= \frac{u^2 + v^2}{2\sqrt{-1} u \pi} \int_\R \int_\R g_n(s,t) e^{\sqrt{-1}us + \sqrt{-1}vt}\ ds dt
\end{align*}
where $z := s+\sqrt{-1} t$,
$$ g_n(s,t) := \frac{d}{ds}\left(\int_0^\infty \log x\ \nu_n(dx, z)\right)$$
and $\nu_n$ is the empirical distribution of the positive-definite Hermitian matrix
$$ H_n := (\frac{1}{\sqrt{n}} N_n - z I_n) (\frac{1}{\sqrt{n}} N_n - z I_n)^*.$$
The expression $g_n(s,t)$ is absolutely integrable in $s,t$, however because of the unboundedness of $\log x$, Fubini's theorem is not currently applicable, and one must take some care with interchanging integrals or derivatives in this expression. In \cite[Lemma 10.4]{bai}, the analogous identity
$$ c(u,v) = \frac{u^2 + v^2}{2\sqrt{-1} u \pi} \int_\R \int_\R g(s,t) e^{\sqrt{-1}us + \sqrt{-1}vt}\ ds dt$$
is derived, where $g$ is a function whose explicit form (given in \cite[p. 296]{bai}) we will not review here.  The task is then to show that
$$ \int_\R \int_\R (g_n(s,t) - g(s,t)) e^{\sqrt{-1}us + \sqrt{-1}vt}\ ds dt \to 0$$
a.s.

The next steps in \cite{bai} are to perform some truncations in the region of integration.
Let $S > 2$ be any integer.  In \cite[Lemma 10.6]{bai} (see also the discussion in \cite[p. 299]{bai}), it was shown that
$$ \lim_{S \to \infty} \limsup_{n \to \infty}
\left|\int\int_{|s| \geq S \hbox{ or } |t| \geq S^3} (g_n(s,t) - g(s,t)) e^{\sqrt{-1}us + \sqrt{-1}vt}\ ds dt\right| = 0$$
a.s.  Thus it suffices to show that
$$ \int\int_{|s| \leq S, |t| \leq S^3} (g_n(s,t) - g(s,t)) e^{\sqrt{-1}us + \sqrt{-1}vt}\ ds dt \to 0$$
a.s. for every $S > 2$.

Fix $S$.  For any $\eps > 0$, let $T \subset \R^2$ denote the set
$$ T := \{ (s,t) \in \R^2| |s| \leq S, |t| \leq S, ||z|-1| \geq \eps \}$$
(recall that $z := s+ \sqrt{-1}t$).  In \cite[Lemma 10.7]{bai} it is
shown that
$$ \int\int_{|s| \leq S, |t| \leq S^3: (s,t) \not \in T} |g_n(s,t)|\ ds dt \leq 32 \sqrt{\eps}$$
and similarly with $g_n(s,t)$ replaced by $g(s,t)$; thus it suffices to show that
$$ \int\int_T (g_n(s,t) - g(s,t)) e^{\sqrt{-1}us + \sqrt{-1}vt}\ ds dt \to 0$$
a.s. for each $\eps > 0$.

Fix $\eps > 0$.  Recall that
$$ g_n(s,t) := \frac{d}{ds}\left(\int_0^\infty \log x\ \nu_n(dx, z)\right).$$
In \cite[Lemma 10.10]{bai} it is shown that
$$ g(s,t) := \frac{d}{ds}\left(\int_0^\infty \log x\ \nu(dx, z)\right)$$
where $\nu$ is an explicit probability measure which we will not review here; in particular, the inner integral is absolutely convergent.  Set $\eps_n := n^{-2B}$ for some large absolute constant $B$ (independent of $n$) to be chosen later. Using the integration by parts argument given in \cite[\S 10.7]{bai}, it suffices to show that
\begin{equation}\label{muso}
 \limsup_{n \to \infty} \int\int_T \left|\int_{\eps_n}^\infty \log x\ (\nu_n(dx, z) - \nu(dx, z))\right| = 0
\end{equation}
and
\begin{equation}\label{muso-2}
 \limsup_{n \to \infty} \int\int_T \left|\int_0^{\eps_n} \log x\ \nu_n(dx, z)\right| = 0
\end{equation}
and similarly with the two-dimensional integral on $T$ replaced by one-dimensional integrals on the boundary of $T$.  We shall only estimate the two-dimensional integrals, as the treatment of the one-dimensional ones are similar\footnote{Actually, by employing a smooth cutoff to $T$ rather than a rough one, one can dispense with the need to consider boundary integrals.}.

We first prove \eqref{muso}.  Since $\a$ has finite second moment, a simple application of Chebyshev's inequality and the Borel-Cantelli lemma, and crude bounds on the spectral norm of $N_n$ shows that almost surely $\nu_n$ is supported on the interval $[0, n^{100}]$.  Thus it suffices to show that
$$ \limsup_{n \to \infty} \int\int_T |\int_{\eps_n}^{ n^{100}} \log x\ (\nu_n(dx, z) - \nu(dx, z))| = 0$$
Observe that $\log x$ has total variation bounded by a finite multiple of $\log n$ on the $x$ region of integration, thus it will suffice to show that
\begin{equation}\label{moosh}
\limsup_{n \to \infty} (\log n) \sup_{z \in T} \sup_{x > 0} |\nu_n(x,z) - \nu(x,z)| = 0.
\end{equation}
For this, it is convenient to perform some truncation, following \cite[\S 10.5.1]{bai}.  Let $0 < \delta < 1/4$ be arbitrary, and define the truncated random variables $\hat a_{ij}$ (depending on $n$) by
$$ \hat a_{ij} := a_{ij} \I( |a_{ij}| < n^\delta ) - \E\left( a_{ij} \I( |a_{ij}| < n^\delta )\right)$$
and the normalised random variable $\tilde a_{ij}$ by
$$ \tilde a_{ij} := \hat a_{ij} / \sqrt{\E(|\hat a_{ij}|^2)}.$$
One easily verifies that $\tilde a_{ij}$ has mean zero, variance $1$, and is also bounded by $O(n^\delta)$ almost surely.  Let $\tilde N_n$ be the matrix with entries $\tilde a_{ij}$, let $\tilde H_n$ be the positive-definite matrix
$$ \tilde H_n := (\frac{1}{\sqrt{n}} \tilde N_n - z I_n) (\frac{1}{\sqrt{n}} \tilde N_n - z I_n)^*$$
and let $\tilde\nu_n(x,z)$ be the distribution function associated to $\tilde H_n$.  The argument in \cite[\S 10.5.1]{bai} gives the bound
\begin{equation}\label{ncde}
 \limsup_{n \to \infty} n^{c \delta \eta} \sup_{z \in T} L( \nu_n(\cdot,z), \tilde \nu_n(\cdot,z) ) < \infty
\end{equation}
almost surely for some absolute constant $c > 0$, where $L$ denotes the Levi distance.  Next, from \cite[Lemma 10.15]{bai} we have
\begin{equation}\label{10.15}
\limsup_{n \to \infty} n^{c' \delta \eta} \sup_{z \in T} \sup_{x>0} |\tilde \nu_n(x,z) - \nu(x,z)| < \infty
\end{equation}
almost surely for another absolute constant $c' > 0$ (this is where we use the hypothesis $\delta < 1/4$).  Applying \cite[Lemma 12.18]{bai} we conclude
$$ \limsup_{n \to \infty} n^{c' \delta \eta} \sup_{z \in T} L(\tilde \nu_n(\cdot,z), \nu(\cdot,z)) < \infty$$
and hence by the triangle inequality for Levi distance
$$ \limsup_{n \to \infty} n^{c'' \delta \eta} \sup_{z \in T} L(\nu_n(\cdot,z), \nu(\cdot,z)) < \infty$$
for some $c'' > 0$.
Applying \cite[Lemma 12.18]{bai} and \cite[Lemma 10.8]{bai} we obtain
$$ \limsup_{n \to \infty} n^{c''' \delta \eta} \sup_{z \in T} \sup_{x > 0} |\nu_n(x,z) - \nu(x,z)| < \infty$$
for some $c''' > 0$, which yields \eqref{moosh} (with some room to spare).  This proves \eqref{muso}.

The only remaining task is to prove \eqref{muso-2}.  We would like to reduce matters to establishing that almost surely we have
\begin{equation}\label{nux}
 \lim_{n \to \infty} \int_0^{\eps_n} \log x\ \nu_n(dx, z) = 0
 \end{equation}
for almost every $z$.  The Lebesgue dominated convergence theorem does not apply directly. However, observe from the triangle inequality in $L^2$ that
\begin{align*}
\left(\int\int_T \left|\int_0^\infty \log x\ \nu_n(dx, z)\right|^2\right)^{1/2}
&= 2 \left(\int\int_T \left|\frac{1}{n} \sum_{k=1}^n \log |\lambda_k - z|\right|^2\right)^{1/2} \\
&\leq 2 \frac{1}{n} \sum_{k=1}^n (\int\int_T \left|\log |\lambda_k - z|\right|^2)^{1/2} \\
&\ll_T 1
\end{align*}
since $\log |z|$ is locally square-integrable.  From bounds on $\nu$ (e.g. \cite[Lemma 10.8]{bai} and the estimates used to prove \cite[Lemma 10.10]{bai}), we also have
$$ \left(\int\int_T |\int_{\eps_n}^\infty \log x\ \nu(dx, z)|^2\right)^{1/2} < \infty,$$
which by \eqref{moosh} implies that
$$ \left(\int\int_T |\int_{\eps_n}^\infty \log x\ \nu_n(dx, z)|^2\right)^{1/2} $$
is bounded uniformly in $n$.  Thus
\begin{equation}\label{intt}
 \left(\int\int_T |\int_0^{\eps_n} \log x\ \nu_n(dx, z)|^2\right)^{1/2}
\end{equation}
is bounded uniformly in $n$, which implies that the sequence of functions $\int_0^{\eps_n} \log x\ \nu_n(dx, z)$ is uniformly integrable on $T$.

Now we can deduce \eqref{muso-2} from \eqref{nux}.  To see this, let $M > 1$ be a large parameter, and let $T_{M,n}$ be the set of all $z$ such that $|\int_0^{\eps_n} \log x\ \nu_n(dx, z)| \leq M$. From \eqref{nux} and the Lebesgue dominated convergence theorem we have
$$ \lim_{n \to \infty} \int\int_{T_{M,n}} \left|\int_0^{\eps_n} \log x\ \nu_n(dx, z)\right| = 0.$$
On the other hand, from the uniform boundedness of \eqref{intt} we see that
$$ \limsup_{n \to \infty} \int\int_{T \backslash T_{M,n}} \left|\int_0^{\eps_n} \log x\ \nu_n(dx, z)\right| \ll \frac{1}{M}.$$
Adding these two estimates, and then letting $M \to \infty$, we obtain \eqref{muso-2}.

It remains to prove \eqref{nux}.  By Fubini's theorem, it suffices to show for every $z$ that \eqref{nux} holds almost surely.  But observe that the integrand in \eqref{nux} vanishes whenever $\frac{1}{\sqrt{n}} N_n - zI_n$ has least singular value at least $n^{-B}$.  By Theorem \ref{lsv}, this holds with probability at least $1 - O(n^{-100})$, if $B$ is sufficiently large.  The claim then follows from the Borel-Cantelli lemma.  This concludes the proof of Theorem \ref{circular}.

\section{Relaxation of the moment condition}\label{relax-sec} 

We observe  that the bound \eqref{moosh} was established with some room to spare.  In fact, the arguments in \cite{bai} allow one to relax the condition $\E |\a|^{2+\delta} < \infty$ to the slightly weaker condition

\begin{equation}\label{alog}
\E |\a|^2 \log^C( 2 + |\a|) < \infty
\end{equation}

\noindent for any sufficiently large constant $C$. By inspecting the arguments in \cite{bai},  we see that any $C > 16$ will work. Perhaps a better constant can be obtained by
tightening some calculations, but we do not try to pursue this direction. It seems to us that in the current approach, the extra log term cannot be removed completely in order to
establish the full conjecture.

We sketch the necessary changes to the argument as follows.  The only part of the argument which needs any attention at all is the proof of \eqref{moosh} (the remaining components of the argument work even just assuming finite second moment for $\a$).  We fix $\delta$ close to $1/4$.  We argue as before but with $\eta$ set equal to $\eta := \frac{C \log \log n}{\log n}$, thus $\eta$ now decays slowly in $n$.  One easily checks using \eqref{alog} that $\E |\tilde \a|^{2+\eta}$ is bounded uniformly in $n$ by some bound $B$.  We then use the arguments from \cite{bai} as before, noting that $n^{c''' \delta \eta}$ will grow faster than $\log n$ if $C$ is chosen large enough.  Almost all of the arguments in \cite{bai} go through even when $\eta$ depends on $n$.  The one task which requires some care is the verification of \eqref{ncde}.  Following the arguments in \cite[\S 10.5.2]{bai} to prove \eqref{ncde}, everything goes through without difficulty except one step in the proof of \cite[Lemma 10.13]{bai}, in which one needs to establish that
$$ \sum_{n=1}^\infty \P\left( \sup_{z \in T} \| \tilde H_n \| > \frac{1}{2} n^{2\delta \eta} \right) < \infty$$
(in order to use the Borel-Cantelli lemma to neglect the contribution of $\I(  \| \tilde H_n \| > \frac{1}{2} n^{2\delta n} )$ for all $z \in T$; note that the computation in \cite[p. 311]{bai} here contains some minor typographical errors).  Writing $\tilde H_n$ in terms of $\tilde N_n$ and using the triangle inequality to dispose of the $zI_n$ terms, it suffices to show that
$$ \sum_{n=1}^\infty \P\left( \| \tilde N_n \| > \frac{1}{4} n^{\delta \eta} \sqrt{n} \right) < \infty$$
Using the moment method, we obtain the bound
$$ \P\left( \| \tilde N_n \| > \frac{1}{4} n^{\delta n}\right) \leq \left(\frac{1}{4} n^{\delta \eta}\right)^{-2k} n^{-k} \E \tr( (\tilde N_n \tilde N_n^*)^k )$$
for any integer $k \geq 1$.
If we choose $k := \lfloor \frac{K \log n}{\log \log n} \rfloor$ for some sufficiently large absolute constant $K$, then the factor $(\frac{1}{4} n^{\delta \eta})^{-2k}$ becomes $O( n^{-100} )$.

To conclude the argument, it suffices to show that
\begin{equation}\label{trak}
 \E \tr( (\tilde N_n \tilde N_n^*)^k ) \ll_{B,K} O(n)^{k+1}.
\end{equation}

This type of bound was established for bounded $k$ in \cite[Lemma 10.11]{bai} using the moment method.
But it is well-known that the method extends to much higher value of $k$, in particular  $k = O_K( \frac{\log n}{\log\log n})$.
Indeed, the left-hand side of \eqref{trak} can be expanded as

\begin{equation}\label{eaa}
\sum_{i_1,\ldots,i_k,j_1,\ldots,j_k \in \{1,\ldots,n\}} \E \tilde \a_{i_1 j_1} \overline{\tilde \a_{j_1 i_2}} \ldots \tilde \a_{i_k j_k} \overline{\tilde \a_{j_k i_1}}.
\end{equation}

To estimate  this, we consider the closed walk of length $2k$ on the set $\{1,\ldots,n\} \times \{1,2\}$, in which one walks from $(i_1,1)$ to $(j_1,2)$ to $(i_2,1)$ to $(j_2,2)$, and so forth to $(i_k,1)$ to $(j_k,2)$ and then back to $(i_1,1)$.  If there is any edge traversed exactly once then the summand in \eqref{eaa} vanishes (since $\tilde \a$ has mean zero, and since all the $\tilde \a_{ij}$ are independent).  Thus we may assume each edge is traversed at least twice, thus there are at most $k$ edges traversed, and thus at most $k+1$ vertices.  Suppose in fact that there are $l+1$ vertices traversed for some $1 \leq l \leq k$.  Then there are at least $l$ edges traversed, and so the sum over all edges of the multiplicity minus two is at most $2k-2l$.  Since $\tilde a$ has a second moment of $O(1)$ and is bounded by $O(n^\delta)$, we conclude that the summand in \eqref{eaa} is at most $O(1)^k O( n^{\delta} )^{2k-2l}$.  On the other hand, the number of closed walks of length $2k$ in a set of $2n$ vertices which traverse exactly $l+1$ vertices can be computed to be at most
  $$(2n)^{l+1} 4^l \binom{2k}{2l} (l+1)^{4(k-l)}$$
(see \cite{furedi} or the introduction of \cite{Vu1}).

Thus the total contribution to \eqref{eaa} can be bounded by
$$ \sum_{l=1}^k O(1)^k O( n^{\delta} )^{2k-2l} (2n)^{l+1} 4^l \binom{2k}{2l} (l+1)^{4(k-l)}.$$
The last ($l=k$) term (which is the dominating term)  is of order  $O(1)^k n^{k+1}$, which is acceptable.  As for the $l < k$ terms, we can bound their contribution crudely by

$$ \sum_{l=1}^{k-1} O(1)^{k} O( k n^{\delta} )^{2k-2l} n^{l+1} =o( n^{k+1}),$$

\noindent using the definition of $k$ and the fact that $\delta$ is small. Thus, this contribution is negligible compared to the main term.  This proves \eqref{trak}, and completes the derivation of the circular law under the hypothesis \eqref{alog}.

\section{Rate of Convergence}\label{sec14}

Let us return to the original hypothesis of bounded $(2+\eta)^\th$ moment for some fixed $\eta > 0$.
The above arguments can be pursued in more detail to obtain the more quantitative result that with probability $1$, we have
\begin{equation}\label{supmun}
\sup_{s,t} |\mu_n(s,t) - \mu_\infty(s,t)| \ll n^{-\eta'}
\end{equation}
for some $\eta' > 0$ depending on $\eta$, and all sufficiently large $n$.

A full exposition of this improvement would be very tedious, so we only give a brief sketch of how the argument proceeds.  We first make some Fourier-analytic reductions, analogous to the proof of Weyl's equidistribution theorem, to reduce matters to controlling the characteristic function $c_n(u,v)$.

Firstly, from \cite[Lemma 10.5]{bai} we have
$$ \frac{1}{n} \sum_{k=1}^n |\lambda_k|^2 \leq \frac{1}{n^2} \sum_{j,k=1}^n |\a_{jk}|^2.$$
Applying the Kolmogorov law of large numbers, we conclude that with probability $1$
$$ \frac{1}{n} \sum_{k=1}^n |\lambda_k|^2 \ll 1$$
for all sufficiently large $n$.  In particular, $d\mu_n$ assigns a measure of $O(n^{-\eta'})$ to the complement of the square $[-n^{\eta'/2}/2,n^{\eta'/2}/2]^2$ (one should think of this as a quantitative tightness estimate on $\mu_n$).  If we then let $d\tilde \mu_n$ be the push-forward of the measure $d\mu_n$ to the torus $(\R/n^{\eta'/2}\Z)^2$, and similarly define $d\tilde \mu_\infty$, it thus suffices to show that
\begin{eqnarray*}
\sup_{|s|,|t| \leq n^{\eta'/2}/2}
|&\int_{(\R/n^{\eta'/2}\Z)^2} 1_{[-n^{\eta'/2}/2,s] \times [-n^{\eta'/2}/2,t]}(s',t') d\tilde \mu_n(s',t') \\
&- \int_{(\R/n^{\eta'/2}\Z)^2}
 1_{[-n^{\eta'/2}/2,s] \times [-n^{\eta'/2}/2,t]}(s',t') d\tilde \mu_\infty(s',t')|\\ & \ll n^{-\eta'}.
 \end{eqnarray*}
Let $\varphi$ be a bump function adapted to the ball $B(0,n^{10\eta'})$, and let $\hat \varphi: (\R/n^{\eta'/2}\Z)^2 \to \C$ be the Fourier series
$$ \hat \varphi(s,t) := \frac{1}{n^{\eta'}} \sum_{u,v \in 2\pi n^{-\eta'/2} \Z} \varphi(u,v) e^{\sqrt{-1}us+\sqrt{-1}vt}.$$
This is an approximation to the identity, and one can then verify the pointwise bounds
$$1_{[-n^{\eta'/2}/2,s] \times [-n^{\eta'/2}/2,t]} \geq 1_{[-n^{\eta'/2}/2,s-n^{-5\eta'}] \times [-n^{\eta'/2}/2, t-n^{-5\eta'}]} * \hat \varphi + O(n^{-\eta'})$$
and
$$1_{[-n^{\eta'/2}/2,s] \times [-n^{\eta'/2}/2,t]} \leq 1_{[-n^{\eta'/2}/2,s+n^{-5\eta'}] \times [-n^{\eta'/2}/2, t+n^{-5\eta'}]} * \hat \varphi + O(n^{-\eta'})$$
(say).  Because of this, and the fact that $\tilde \mu_n$, $\tilde \mu_\infty$ are probability measures, it will suffice to show that
\begin{eqnarray*}
 \sup_{|s|,|t| \leq n^{\eta'/2}/2}
| & \int_{(\R/n^{\eta'/2}\Z)^2} 1_{[-n^{\eta'/2}/2,s] \times [-n^{\eta'/2}/2,t]}*\hat \varphi(s',t') d\tilde \mu_n(s',t')
\\ &- \int_{(\R/n^{\eta'/2}\Z)^2} 1_{[-n^{\eta'/2}/2,s] \times [-n^{\eta'/2}/2,t]}*\hat \varphi(s',t') d\tilde \mu_\infty(s',t')|  \\ &\ll n^{-\eta'}. \end{eqnarray*}
Taking Fourier transforms and using the triangle inequality, we can bound the left-hand side by
$$ n^{100\eta'} \sup_{u,v \in 2\pi n^{-\eta'/2} \Z: |u|, |v| \ll n^{10\eta'}}
|\int_{(\R/n^{\eta'/2}\Z)^2} e^{\sqrt{-1}us+\sqrt{-1}vt}d\tilde \mu_n(s,t) - \int_{(\R/n^{\eta'/2}\Z)^2} e^{\sqrt{-1}us+\sqrt{-1}vt}d\tilde \mu_\infty(s,t)|,$$
which is equal to
$$ n^{100\eta'} \sup_{u,v \in 2\pi n^{-\eta'/2} \Z: |u|, |v| \ll n^{10\eta'}} |c_n(u,v) - c(u,v)|.$$
Thus it will suffice (by the union bound and the Borel-Cantelli lemma) to show that for any fixed $u, v$ with $|u|, |v| \leq n^{\eta'}$, one has
\begin{equation}\label{cnc}
 \P( |c_n(u,v) - c(u,v)| \ll n^{-200\eta'} ) \geq 1 - O(n^{-10})
\end{equation}
for all sufficiently large $n$.

To prove \eqref{cnc} one repeats the proof of \eqref{cnu}, which
requires going through all the relevant arguments in \cite{bai} and
noting that all the almost sure convergence results can be replaced
instead with more quantitative polynomial convergence results
(similar to \eqref{cnc}).  We perform only one of these steps in
detail, namely the proof of the quantitative analogue of
\eqref{muso-2},
$$ \P( \int\int_T \left|\int_0^{\eps_n} \log x\ \nu_n(dx, z)\right| \ll n^{-200\eta'} ) \geq 1 - O(n^{-10}).$$
Inspecting the proof of \eqref{nux}, we see that for each fixed $z$, $\int_0^{\eps_n} \log x\ \nu_n(dx, z)$ vanishes with probability $O(n^{-100})$.  By Fubini's theorem and Markov's inequality, we thus see that with probability $1 - O(n^{-50})$, the set $\{ z \in T: \int_0^{\eps_n} \log x\ \nu_n(dx, z) \neq 0 \}$ has measure at most $n^{-50}$.  Since \eqref{intt} is bounded uniformly in $n$, the claim now follows from the Cauchy-Schwarz inequality.

\begin{remark} It is quite likely that one can make the convergence even more quantitative, establishing a bound of the form
$$ \P( \sup_{s,t} |\mu_n(s,t) - \mu_\infty(s,t)| \leq n^{-\eta'} ) \geq 1 - O( n^{-1-\eta'} )$$
for all $n \geq 1$; note that the claim \eqref{supmun} is a corollary of this bound and the Borel-Cantelli lemma.  This requires replacing the Kolmogorov law of large numbers with a more quantitative law of large numbers which takes advantage of the fact that the random variable $|\a|^2$ does not merely have finite first moment, but in fact has finite $(1 + \frac{\eta}{2})^\th$ moment.  We omit the details.
\end{remark}

\section{The sparse case}\label{sparse-sec}

In this section we sketch how one can modify the arguments in Section \ref{circular-sec} to obtain the circular law for sparse matrices (i.e. Theorem \ref{circular-sparse}).  The proof shall be a modification of that\footnote{It is also likely that the arguments in \cite{gotze} (see also \cite{PZ}) could also be adapted to handle this case, at least if one assumes additional moment conditions on $\a$, since the lower bound $\alpha>3/4$ required in that paper was only needed to obtain an analogue of Theorem \ref{lsv-sparse}.} of Theorem \ref{circular}.  In that theorem, one first needed the convergence
$$\lim_{n \to \infty} \frac{1}{n^2} \sum_{j,k=1}^n |\a_{jk}|^2 = \E |\a|^2 < \infty,$$
which was a consequence of the Kolmogorov law of large numbers, in order to obtain tightness of the $\mu_n$.  In the sparse case, the analogous convergence result one needs is
\begin{equation}\label{rhon}
\lim_{n \to \infty} \frac{1}{\rho n^2} \sum_{j,k=1}^n |\I_{j,k,\rho} \a_{jk}|^2 = \E |\a|^2 < \infty.
\end{equation}
But one easily computes that with probability $1$, $\I_{j,k,\rho}$ is equal to $1$ for $(1+o(1))\rho n^2$ values of $j,k$, and so this claim also follows from the Kolmogorov law of large numbers.

We now repeat the arguments of Section \ref{circular-sec}, using Theorem \ref{lsv-sparse} instead of Theorem \ref{lsv}.
The truncation argument in \cite[\S 10.5.1]{bai} which allows one to replace $\nu_n$ with $\tilde \nu_n$ can be easily modified, basically by similar arguments to the one used to deduce \eqref{rhon}.  The only step which requires care is the modification of \cite[Lemma 10.15]{bai} needed to establish the sparse analogue of \eqref{10.15}.  The proof of this lemma in \cite{bai} requires some upper bounds for the expected moments $\E \tr(\tilde H_n^k)$ of $\tilde H_n$ (see \cite[Lemma 10.11]{bai}), but it is not difficult to verify\footnote{As is well-known, the expected moments reduces to a sum over paths of length $k$, such as \eqref{eaa}.  For those paths in which each edge is traversed exactly twice, there is no difference between the sparse matrix and dense matrix as far as the expectation is concerned.  For those paths in which an edge is traversed more than twice, the sparse matrix contributes more than the dense matrix, but one can still show that the net contribution here is dominated by the main term in which each edge is traversed exactly twice; roughly speaking, for each fewer vertex that one traverses, one loses a factor of $n^\delta/\rho$ but picks up a factor of $n$, leading to a net gain of a positive power in $n$.}   that these upper bounds continue to hold in the sparse case.  The rest of the proof of \cite[Lemma 10.15]{bai} proceeds with only minor changes.

\subsection{Acknowledgements}

We are greatly indebted to Manjunath Krishnapur for pointing out
the connection between the least singular value bounds and  the
circular law problem and for many useful discussions, and to Dmitry
Timushev for corrections. We also thank P. Wood and an anonymous referee for their careful reading.
The first author also thanks Mark Rudelson
for some helpful conversations. The first author is supported by a
grant from the Macarthur Foundation and by NSF grant CCF-0649473.
The second author is supported by NSF Grant 06355606.

\end{document}